\newcommand{\be}{\begin{equation}}
\newcommand{\ee}{\end{equation}}
\theoremstyle{plain}
\newtheorem{theorem}{Theorem}[section]
\newtheorem{proposition}[theorem]{Proposition}
\newtheorem{corollary}[theorem]{Corollary}
\newtheorem{lemma}[theorem]{Lemma}
\newcommand{\R}{\mathbb{R}}
\newcommand{\vf}{\varphi}
\newcommand{\N}{\mathbb{N}}
\newcommand{\eps}{\varepsilon}
\newcommand{\HH}{\mathcal{H}}
\newcommand{\bea}{\begin{equation*}\begin{aligned}}
		\newcommand{\eea}{\end{aligned}\end{equation*}}
\numberwithin{equation}{section}
\title{An epiperimetric inequality for odd frequencies in the thin obstacle problem}
\author[M. Carducci]{Matteo Carducci}\thanks{}
	\address {Matteo Carducci \newline \indent
		Classe di Scienze, Scuola Normale Superiore \newline \indent
		Piazza dei Cavalieri 7, 56126 Pisa - ITALY}
	\email{\href{mailto:matteo.carducci@sns.it}{matteo.carducci@sns.it}}
	\author[B. Velichkov]{Bozhidar Velichkov}\thanks{}
	\address {Bozhidar Velichkov \newline \indent
		Dipartimento di Matematica, Universit\`a di Pisa \newline \indent
		Largo B. Pontecorvo 5, 56127 Pisa - ITALY}
	\email{\href{mailto:bozhidar.velichkov@unipi.it}{bozhidar.velichkov@unipi.it}}
\begin{document}
	\keywords{Regularity, free boundaries, thin obstacle problem, epiperimetric inequality}
	\subjclass[2010]{35R35}
	\begin{abstract}
 We prove for the first time an epiperimetric inequality for the thin obstacle Weiss' energy with odd frequencies and we apply it to solutions to the thin obstacle problem with general $C^{k,\gamma}$ obstacle. In particular, we obtain the rate of convergence of the blow-up sequences at points of odd frequencies and the regularity of the strata of the corresponding contact set. We also recover the frequency gap for odd frequencies obtained by Savin and Yu.
	\end{abstract}
	
	\maketitle
	
	\tableofcontents
	\section{Introduction}
We consider solutions $u:B_1\subset\R^{n+1}\to \R$ to the thin obstacle problem 
\be\label{def:sol-con-ostacolo}
\begin{cases}
    \Delta u=0 & \text{in } B_1\setminus \{u(x',0)=\vf(x')\},\\
    \Delta u \le0 & \text{in } B_1,\\
    u(x',0)\ge\vf(x') &\text{on } B_1':=B_1\cap\{x_{n+1}=0\},\\
    u(x',x_{n+1})=u(x',-x_{n+1})& \text{in } B_1,
\end{cases}
\ee
with obstacle $\vf:B_1'\subset\R^{n}\to \R$ satisfying 
    \be\label{e:hypo-phi}\vf\in C^{k,\gamma}(B_1'),\quad\text{with}\quad k\in\N_{\ge2}\cup\{+\infty\}\quad\text{and}\quad\gamma\in(0,1).\ee
The thin obstacle problem can also be formulated as a variational problem 
$$\min_{w\in H^1(B_1)}\left\{\int_{B_1}|\nabla w|^2\,dx: w\ge \vf \text{ on } B_1',\ w=g \text{ on } \partial B_1,\ w(x,x_{n+1})=w(x,-x_{n+1})\right\},$$ 
for a given boundary datum $g:\partial B_1\to\R$, which is even with respect to $\{x_{n+1}=0\}$, in the sense that 
$g(x',x_{n+1})=g(x',-x_{n+1})$ for every $(x',x_{n+1})\in B_1.$

The optimal regularity of the solution $u$ was obtained in \cite{ac04} where it was shown that $u\in \text{Lip}(B_1)\cap C^{1,\frac12}(B_1^+\cup B_1')$, where $B_1^+$ is the open half-ball $B_1^+:=B_1\cap\{x_{n+1}>0\}$; this regularity is also optimal as there are $3/2$-homogeneous global solutions to \eqref{def:sol-con-ostacolo} with $\varphi=0$.\medskip

In this paper we are interested in the local behavior of $u$ around points on the hyperplane $\{x_{n+1}=0\}$, which is 
determined by the structure of the contact set 
$$\Lambda(u):=\{x'\in B_1': u(x',0)=\varphi(x')\},$$ 
and its free boundary $\Gamma(u)$ defined as the topological boundary of $\Lambda(u)$ with respect to the relative topology of the hyperplane $\{x_{n+1}=0\}$:
$$\Gamma(u):=\partial\Lambda(u)\subset\Lambda(u).$$ 

 Since $\vf$ satisfies the regularity assumption \eqref{e:hypo-phi}, we can reduce the thin-obstacle problem \eqref{def:sol-con-ostacolo} to a thin-obstacle problem with right-hand side and a zero obstacle by proceeding as in \cite{gr19}, \cite{gp09}, \cite{css08}, and \cite{bfr18}. Given $x_0\in\Lambda(u)$, let $q_k^{(x_0)}(x')$ be the $k$-th Taylor polynomial of $\vf$ at $x_0\in\Gamma(u)$ and $\widetilde q_k^{(x_0)}(x)$ be the polynomial of degree $k$ which is the harmonic extension of $q_k^{(x_0)}(x')$. Then, the function
  \bea 
  v(x)=u^{(x_0)}(x):=u(x)-\vf(x')+q_k^{(x_0)}(x')-\widetilde q_k^{(x_0)}(x),
  \eea 
  solves the following problem 
  \be\begin{cases}\label{def:v}
		\Delta  v(x)=h(x) & \mbox{in } B_1\setminus \{v(x',0)=0\}\\
		\Delta  v(x)\le h(x) & \mbox{in } B_1,\\
    v(x',0)\ge0 & \mbox{on } B_1',\\
  v(x',x_{n+1})=v(x',-x_{n+1}) &\mbox{in } B_1,
	\end{cases}\ee
 where $h(x):=- \Delta_{x'}(\vf(x')-q_k^{(x_0)}(x'))$. In particular 
 $$|h(x)|\le C|x-x_0|^{k+\gamma-2}\quad\text{for every}\quad x\in B_1,$$ for some constant $C>0$, depending only on $n$, $\varphi$, $k$ and $\gamma$.
 
 As in \cite{css08, gp09,bfr18,gr19}, we consider the following truncated Almgren's frequency function 
 \bea\Phi^{x_0}(r,v):=(r+C_{\Phi}r^{1+\theta})\frac{d}{dr}\log\max\{H^{x_0}(r,v),r^{n+2(k+\gamma-\theta)}\},\eea for $\theta\in(0,\gamma)$, $C_{\Phi}>0$ large enough and 
 $$H^{x_0}(r,v):=\int_{\partial B_r(x_0)}v^2\,d\HH^n.$$
 The function $r\mapsto \Phi^{x_0}(r,v)$ is monotone increasing for $r>0$ small enough (see \cite{gr19} and \cref{prop:mono}) and, as above, we define the frequency at a point $x_0\in\Lambda(u)$ as 
 $$\Phi^{x_0}(0^+,v):=\lim_{r\to0^+}\Phi^{x_0}(r,v)\quad\text{where}\quad v=u^{(x_0)}.$$
The above monotonicity formula  allows to decompose the contact set and the free boundary into sets of points with the same frequency. Precisely, given a frequency $\mu>0$, lying below the threshold $k+\gamma$ determined by the obstacle $\varphi$, we consider the following subsets of  the contact set $\Lambda(u)$ and its boundary $\Gamma(u)$: 
$$\Gamma_\mu(u):=\{x_0\in\Gamma(u): \Phi^{x_0}(0^+,v)=n+2\mu\}\quad\text{for every}\quad \mu<k+\gamma,$$
 and \bea \Lambda_\mu(u):=\{x_0\in\Lambda(u): \Phi^{x_0}(0^+,v)=n+2\mu\}\quad\text{for every}\quad \mu<k+\gamma.\eea
 Given $\mu<k+\gamma$, $x_0\in\Lambda_{\mu}(u)$ and $v=u^{(x_0)}$ as in \eqref{def:v}, consider the rescalings
 $$\widetilde v_{x_0,r}(x)=\frac{v(x_0+r\cdot)}{\|v(x_0+r\cdot)\|_{L^2(\partial B_1)}}.$$
 Thanks to the (almost-)monotonicity of $r\mapsto \Phi^{x_0}(r,v)$ and the minimality of $u$, we have that every sequence $r_n\to0$ admits a subsequence (still denoted by $r_n$) such that $\widetilde v_{r_n,x_0}$ converges strongly in $H^1(B_1)$ to some $v_0$. 
%
 %
 Moreover, every $v_0$ obtained this way is $\mu$-homogeneous and solves the following thin-obstacle problem, which is precisely \eqref{def:sol-con-ostacolo} with $\varphi=0$: 
\be\label{def:sol}
\begin{cases}
    \Delta u=0 & \text{in } B_1\setminus \{u(x,0)=0\},\\
    \Delta u\le0 & \text{in } B_1,\\
    u(x',0)\ge0 &\text{on } B_1',\\
    u(x,x_{n+1})=u(x,-x_{n+1})& \text{in } B_1.
\end{cases}
\ee

\subsection{Admissible frequencies: state of the art} We say that $\mu$ is {\it an admissible frequency in $\R^{n+1}$} if there exists a non-trivial $\mu$-homogeneous solution to \eqref{def:sol} in $B_1\subset \R^{n+1}$, and we indicate the set of admissible frequencies by
\be\label{eq:a}\mathcal{A}_n:=\{\mu>0: \text{there is a $\mu$-homogeneous solution to \eqref{def:sol} in $\R^{n+1}$}\}.\ee
We notice that, since every $\mu$-homogeneous solution in $\R^{n+1}$ can be extended to a $\mu$-homogeneous solution in $\R^{n+2}$, we have the inclusions $\mathcal A_{n}\subset\mathcal A_{n+1}$ for every $n\ge 1$.

In dimension $n+1=2$, it is known (see for instance \cite{psu12} and the references therein) that the set of admissible frequencies is given by
$$\mathcal A_1=\{2m-1/2\ :\ m\ge 1\}\cup\{2m\ :\ m\ge1 \}\cup\{2m+1\ :\ m\ge 0\}.$$
The known results (up to this point) about the admissible set $\mathcal A_n$ in dimension $n>1$ are the following: 
 \begin{itemize}
     \item in \cite{acs08} (see also \cite{gps16,fs16,csv20,car23,car24} for an approach based on epiperimetric inequalities) it was shown that $$\mathcal{A}_n\cap \Big((0,1)\cup\left(1,\sfrac32\right)\cup\left(\sfrac32,2\Big)\right)=\emptyset\,;$$ 
     \item in \cite{csv20} it was proved via an  epiperimetric inequality that, for every $m\in\N$, there are constants $c^\pm_{n,m}>0$, depending only on $n$ and $m$, such that 
     $$\mathcal{A}_n\cap \Big((2m-c_{n,m}^-,2m)\cup(2m,2m+c_{n,m}^+)\Big)=\emptyset\,;$$ 
     we refer also to \cite{sy22} where this result was obtained via different arguments;
     \item in \cite{sy22} it was shown that, for every $m\in\N$ there are constants $c^\pm_{n,m}>0$, depending only on $n$ and $m$, such that 
     \bea \mathcal{A}_n\cap \Big((2m+1-c_{n,m}^-,2m+1)\cup(2m+1,2m+1+c_{n,m}^+)\Big)=\emptyset\,;
     \eea
     \item in the recent paper \cite{FranceschiniSavin2024}
 it was shown that 
 \bea \mathcal{A}_n\cap (2m,2m+1)=\emptyset\quad\text{for all}\quad m\in\N\,;
     \eea  
 \item finally, we notice that it is currently an open question whether $\mathcal A_n\setminus\mathcal A_1=\emptyset$.
 \end{itemize}
\subsection{Regularity of the free boundary: state of the art} 
For what concerns the regularity of the free boundary $\Gamma(u)$ and the contact set $\Lambda(u)$ of a solution $u$ to \eqref{def:sol-con-ostacolo}, with obstacle $\vf$ satisfying \eqref{e:hypo-phi},
the known results are the following.
In the lowest dimension $n+1=2$
$$\bigcup_{0<\mu< k+\gamma}\Gamma_\mu(u)=\bigcup_{0<\mu< k+\gamma}\{\Gamma_\mu(u): \ \mu\in 2\N\cup (2\N-1/2)\}$$  is a discrete set, while for the contact set we have $$\bigcup_{0<\mu< k+\gamma}\Lambda_\mu(u)\setminus\Gamma_\mu(u)=\bigcup_{0<\mu< k+\gamma}\{\Gamma_{2m+1}(u): \ m\in\N_{\ge0}\}.$$
In dimension $n+1>2$, when the obstacle $\varphi$ is zero (or analytic), it holds
$$\Gamma(u)=\bigcup_{0<\mu< +\infty}\Gamma_\mu(u)\qquad\text{and}\qquad \Lambda(u)=\bigcup_{0<\mu< +\infty}\Lambda_\mu(u).$$
In the case of a general obstacle $\varphi$, we only consider frequencies $\mu$ below the threshold $k+\gamma$ determined by the regularity of $\varphi$, so we have $$\Gamma(u)\supset\bigcup_{0<\mu< k+\gamma}\Gamma_\mu(u)\qquad\text{and}\qquad \Lambda(u)\supset\bigcup_{0<\mu< k+\gamma}\Lambda_\mu(u).$$
Here below, we briefly recall some of the known regularity results in the literature up to this point; for more detailed introduction to the topic we refer to the book \cite{psu12} and to the surveys \cite{survey,dasa18}.

 \begin{itemize}
 \item \textit{Points of frequency one.} The points of frequency $1$ lie in the interior of the contact set, that is, $\Lambda_1(u)$ is an open subset of $\R^n$, while $\Gamma_1(u)=\emptyset$. \smallskip
\item \textit{Regular points.} The contact points of frequency $3/2$, which are called {\it regular points}, are contained in the free boundary $\Gamma(u)$, that is $\Lambda_{3/2}(u)=\Gamma_{3/2}(u)$. Moreover, $\Gamma_{3/2}(u)$ is an open subset of $\Gamma(u)$ and a $C^{1,\alpha}$-regular $(n-1)$-dimensional manifold; this was proved in \cite{acs08} in the case $\vf\equiv0$ and in \cite{css08} in the case $\vf\not\equiv0$;
see also \cite{gps16,fs16,gpps17,csv20} for proofs based on epiperimetric inequalities. The $C^{\infty}$ regularity of $\Gamma_{3/2}(u)$ was obtained in \cite{kps15,ds16} in the case of zero obstacle. For generic boundary data, in \cite{fr21} it was shown that the non-regular part of the free boundary is at most $(n-2)$-dimensional (for $C^\infty$ obstacle), while in \cite{ft23} it was proved that the non-regular set has zero $\mathcal{H}^{n-3-\alpha}$ measure (for zero obstacle). In particular, for $n+1\le 4$, the free boundary is {generically} smooth (for zero obstacle).
\smallskip 
     \item \textit{Singular points.} For every $m\in\N_{\ge1}$ with $2m\le k$, the contact points of frequency $2m$ (the so-called \textit{singular points}) are contained in the free boundary $\Gamma(u)$, that is, $\Lambda_{2m}(u)=\Gamma_{2m}(u)$. Moreover, each of the sets $\Gamma_{2m}(u)$ is contained in a countable union of $(n-1)$-dimensional $C^1$ manifolds. This result was proved in \cite{gp09} in the case $\vf\equiv0$ and $\vf\in C^{k,1}$ and in \cite{gr19} in the case $\vf\in C^{k,\gamma}$.
     The same result with a logarithmic modulus of continuity was obtained via log-epiperimetric inequality in \cite{csv20} for zero obstacle and in \cite{car24} in the general case $\vf\in C^{k,\gamma}$. Moreover, in \cite{fj21} it was proved that each stratum of the
     singular set is locally contained in a single $C^2$ manifold, up to a lower dimensional subset, in the case $\vf\equiv0$.
     \smallskip
     \item \textit{Points of odd frequency.} In the case $\vf\equiv0$, in \cite{sy23} it was shown that, for every $m\ge 0$, the set $\Lambda_{2m+1}(u)$ is contained in a countable union of $(n-1)$-dimensional manifolds of class $C^{1,\alpha}$. Contrary to what happens for points of frequency $3/2$ and $2m$, the points of odd frequency may also lie in the interior of the contact set. In fact, it was shown in \cite{frs20} that for all homogeneous solutions with zero obstacle $\Lambda(u)\equiv\{x_{n+1}=0\}$. 
     It is currently not known if one can find a solution $u$ to \eqref{def:sol} for which the set $\Gamma_{2m+1}(u)$ is not empty.
     We also stress that no epiperimetric inequality for odd frequencies was known until now.\smallskip
    \item \textit{Points of frequency $2m-1/2$.} The last class of points with 2D blow-ups (that is, points at which there are blow-ups depending only on two of the $n+1$ variables) are the points of homogeneity $2m-1/2$ with $m>1$. In dimension $n+1=3$, Savin and Yu \cite{sy22:7/2} proved a regularity result for $\Gamma_{7/2}(u)$ around points at which $u$ admits \textit{half-space} blow-ups; precisely, they showed that this set is the union of a locally discrete set and a set which is locally covered by a $C^{1,\log}$ curve. A general regularity result, up to codimension $3$, about the free boundary points of frequency $2m-1/2$ was proved by Franceschini 
 and Serra in \cite{fs24}.\smallskip
     \item \textit{Rectifiability of the free boundary.} Finally, we notice that in  \cite{fs18} and \cite{fs22} it was shown that the free boundary $\Gamma(u)$ is an $(n-1)$-rectifiable set for any $n+1\ge 2$. In particular, this implies that at $\HH^{n-1}$-almost every point $x_0\in\Gamma(u)$ the blow-up is unique (this was shown in \cite{csv21} in the case $\varphi\equiv0$). This results was improved in \cite{fs24}, where the authors proved that, in the case $\vf\equiv0$, the free boundary $\Gamma(u)$ is covered by countably many $(n-1)$-dimensional manifolds of class $C^{1,1}$, up to a set of Hausdorff dimension $n-2$.
     
 \end{itemize}
  
 \subsection{Main results}\label{sub:intro-main-result} 
 The main result of this paper is an epiperimetric inequality for the Weiss' energy associated to the odd frequencies. Before we state our main theorem, we introduce some notations. 
 For every $m\in\N$, we define the set
 \begin{align*}
 \mathcal{P}_{2m+1}:=\{p: \ & \Delta p=0\ \text{ in } \{x_{n+1}\neq 0\},\ \Delta p\le0\ \text{ in } \R^{n+1},\\
 &\nabla p\cdot x=(2m+1)p,\ p\equiv 0\ \text{ on } B_1',\ p(x',x_{n+1})=p(x',-x_{n+1})\},
\end{align*} 
and we recall that, by \cite{frs20}, the set of admissible blow-ups at any point of frequency $2m+1$ is precisely given by $\mathcal P_{2m+1}$.
Every $p\in\mathcal{P}_{2m+1}$ can be written in the form 
$$p(x',x_{n+1})=-|x_{n+1}|(p_0(x')+x_{n+1}^2p_1(x',x_{n+1}))$$ 
for some homogeneous polynomials $p_0$ and $p_1$ satisfying the inequality $p_0\ge0$ (which follows from the fact that $p$ is superharmonic). We define the operator $T$ as 
 \begin{equation}\label{e:definition-of-T}
 T:\mathcal{P}_{2m+1}\to L^2(B_1')\ ,\qquad p\mapsto T[p]:=p_0.
 \end{equation}
We denote by $W_\mu$ the Weiss' energy associated to the frequency $\mu$, precisely:
\be\label{weiss}W_\mu(u):=\int_{B_1}|\nabla u|^2\,dx-\mu\int_{\partial B_1}u^2\,d\HH^n.\ee 
Our main result is the following epiperimetric inequality for the Weiss' energy $W_{2m+1}$.
 \begin{theorem}[Epiperimetric inequality for $W_{2m+1}$]  \label{thm:epi} 
     There are constants $ \eps>0$, $\delta>0$ and $\kappa>0$, depending only on $n$ and $m$, such that the following holds.
     Let $c\in H^1(\partial B_1)$ be a trace which is even with respect to $\{x_{n+1}=0\}$ and such that $c\ge 0$ on $B_1'$. Let $z(r,\theta)=r^{2m+1}c(\theta)$ be the $(2m+1)$-homogeneous extension of $c$ in $\R^{n+1}$.  
     
     Suppose that there is $p\in\mathcal{P}_{2m+1}$ with $\|p\|_{L^2(\partial B_1)}=1$ such that
     \be\label{eq:vicinanza}
     \|c-p\|_{L^2(\partial B_1)}\le \eps,
     \ee
     and 
     \be\label{eq:quasizero} c\equiv 0 \quad\text{on}\quad \mathcal{Z}_\delta:=\{T[p]\ge\delta\}\cap \partial B_1', \ee 
     where 
     $T$ is the operator from \eqref{e:definition-of-T}.
     Then, there is a function $\zeta\in H^1(B_1)$ satisfying the epiperimetric inequality
     \be\label{eq:epi}
     W_{2m+1}(\zeta)\le (1-\kappa)W_{2m+1}(z),
     \ee 
     and such that $\zeta\ge 0$ on $B_1'$, $\zeta=c$ on $\partial B_1$ and $\zeta$ is even with respect to $\{x_{n+1}=0\}$.
 \end{theorem}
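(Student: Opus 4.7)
I would argue by contradiction, linearising around an element of $\mathcal{P}_{2m+1}$. Suppose the theorem fails along sequences $\eps_j,\delta_j,\kappa_j\to 0$, with $p_j\in\mathcal{P}_{2m+1}$ of unit $L^2(\partial B_1)$-norm and traces $c_j$ satisfying (\ref{eq:vicinanza})--(\ref{eq:quasizero}) for which no admissible $\zeta$ realises (\ref{eq:epi}). Since $\mathcal{P}_{2m+1}$ sits in a finite-dimensional space of polynomials of fixed degree (times $|x_{n+1}|$), up to a subsequence $p_j\to p_\infty\in\mathcal{P}_{2m+1}$ in every norm; we may also assume that $p_j$ is a near-projection of $c_j$ onto $\mathcal{P}_{2m+1}$. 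Set $\alpha_j:=\|c_j-p_j\|_{L^2(\partial B_1)}\le\eps_j$ and $\tilde\phi_j:=(c_j-p_j)/\alpha_j$, of unit $L^2$-norm; extract a limit $\tilde\phi_\infty$ in $L^2(\partial B_1)$. The constraints $c_j\ge 0$ on $\partial B_1'$ and (\ref{eq:quasizero}) pass to the limit, giving $\tilde\phi_\infty\ge 0$ on $\partial B_1'$ and $\tilde\phi_\infty\equiv 0$ on $\{T[p_\infty]>0\}\cap\partial B_1'$.

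I would then take competitors of the form $\zeta_j:=p_j+\alpha_j\psi_j$, where $\psi_j$ is even, $\psi_j=\tilde\phi_j$ on $\partial B_1$, and $\psi_j\ge 0$ on $B_1'$; this makes $\zeta_j$ admissible. Integration by parts using $\Delta p_j=-2\,T[p_j]\,\HH^n\res\{x_{n+1}=0\}$ and $\partial_\nu p_j=(2m+1)p_j$ on $\partial B_1$ yields $W_{2m+1}(p_j)=0$ together with the polar identity
\[
B(p_j,u):=\int_{B_1}\nabla p_j\cdot\nabla u-(2m+1)\int_{\partial B_1}p_j\,u = 2\int_{B_1'}T[p_j](x')\,u(x',0)\,dx'.
\]
Both Weiss energies therefore split as
\[
W_{2m+1}(z_j)=2\alpha_j B(p_j,z_{\tilde\phi_j})+\alpha_j^2 W_{2m+1}(z_{\tilde\phi_j}), \qquad W_{2m+1}(\zeta_j)=2\alpha_j B(p_j,\psi_j)+\alpha_j^2 W_{2m+1}(\psi_j).
\]
By (\ref{eq:quasizero}) the integrand of $B(p_j,z_{\tilde\phi_j})$ is supported on the thin set $\{0<T[p_j]<\delta_j\}\cap B_1'$, hence is $o_j$-small compared with $W_{2m+1}(z_{\tilde\phi_j})$; a cut-off adapted to this set makes $B(p_j,\psi_j)$ negligible as well. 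The theorem thus reduces to the \emph{linear} epiperimetric inequality
\[
W_{2m+1}(\psi_j)\le (1-2\kappa)\,W_{2m+1}(z_{\tilde\phi_j})
\]
for a universal $\kappa>0$, which I would prove at the limit and then lift.

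For the linear problem I decompose $\tilde\phi_\infty$ in the Signorini spherical eigenbasis on $\partial B_1$, whose admissible frequencies form $\mathcal{A}_n$. At the critical frequency $\mu=2m+1$ the admissible blow-ups are precisely $\mathcal{P}_{2m+1}$ by \cite{frs20}; the near-projection choice of $p_j$, combined with the vanishing constraint $\tilde\phi_\infty\equiv 0$ on the open set $\{T[p_\infty]>0\}$ and the polynomial rigidity of the $T$-map, forces the critical-frequency component of $\tilde\phi_\infty$ to vanish. The remaining off-critical modes live at frequencies with $|\mu-(2m+1)|\ge c>0$, by the Savin--Yu frequency gap \cite{sy22}, and for each such mode the Signorini (equivalently, harmonic) extension has Weiss energy bounded by a definite fraction of that of its $(2m+1)$-homogeneous counterpart; summing yields the linear inequality at the limit. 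A quantitative approximation then produces $\psi_j$ satisfying the same inequality for finite $j$, contradicting $\kappa_j\to 0$. The \textbf{main obstacle} is precisely this transfer step: building admissible $\psi_j$ that are quantitatively close to the limit competitor while vanishing near $\{T[p_j]>0\}$ so that all cross terms remain controlled. This is delicate because the level sets of $T[p_j]$ may degenerate as $p_\infty$ sits in a non-generic stratum of $\mathcal{P}_{2m+1}$, and the hypothesis (\ref{eq:quasizero}) on $\mathcal{Z}_{\delta_j}$ is exactly the input that makes the cross-term bookkeeping quantitative.
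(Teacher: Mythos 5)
Your proposal takes a genuinely different route (compactness and linearisation around $p_\infty$) from the paper's direct construction, but as written it has two gaps that the indicated strategy does not repair. First, the cross term: in the paper the bilinear term between the polynomial part and the error part is not discarded as negligible, it is cancelled \emph{exactly}, because $R_\mu(r^\mu P,r^\alpha\phi)=\frac{1}{n+\alpha+\mu-1}\beta_\mu(P,\phi)$ depends on the homogeneity $\alpha$ only through the prefactor and the choice $\kappa_{\alpha,\mu}=\frac{\alpha-\mu}{n+\alpha+\mu-1}$ makes the prefactors on the two sides of \eqref{eq:epi} coincide (\cref{lemma:double-product}). Your claim that $B(p_j,z_{\tilde\phi_j})$ is ``$o_j$-small compared with $W_{2m+1}(z_{\tilde\phi_j})$'' compares a quantity of size $O(\delta_j)$ with one carrying the independent prefactor $\alpha_j^2$; nothing in the hypotheses forces $\delta_j\ll\alpha_j^2$, so the reduction to the linear inequality does not close. (The sign $B(p_j,z_{\tilde\phi_j})\ge0$ could in principle substitute for smallness on the $z$-side, but then everything hinges on building $\psi_j$ with vanishing cross term \emph{and} a definite energy gain, which is precisely the step you defer as the ``main obstacle''.)

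Second, the linear model is not well posed as you describe it. The limit $\tilde\phi_\infty$ is nonnegative but does \emph{not} vanish on all of $\partial B_1'$ (only on $\{T[p_\infty]>0\}$), so it cannot be expanded in the half-sphere Dirichlet eigenbasis; and there is no linear ``Signorini spherical eigenbasis'' indexed by $\mathcal A_n$ --- that set is a nonlinear object, and the Savin--Yu gap \cite{sy22} concerns homogeneous solutions of the thin obstacle problem, not the spectrum of a linearisation (invoking it here would also be circular with respect to \cref{prop:gap}, which this paper derives \emph{from} the epiperimetric inequality). Nor is it necessary, or possible, to kill the critical-frequency component: the paper keeps all modes of homogeneity $\le 2m+1$ unchanged and only uses $W_{2m+1}(r^{2m+1}P)\le0$ (\cref{lemma:lower}). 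The device your outline is missing is the \emph{mixed} decomposition: low modes in the half-sphere eigenbasis (so $P\equiv0$ on $B_1'$ and the competitor stays admissible), high modes in the Dirichlet eigenbasis of $S_\delta=\partial B_1\setminus\mathcal Z_\delta$, whose eigenvalues converge to the half-sphere ones as $\delta\to0$ (\cref{lemma:convergence-of-s_delta}) and hence exceed $\lambda(2m+3/2)$ for $\delta$ small, allowing the homogeneity of those modes to be raised (\cref{lemma:higher}); the two bases are matched via the implicit function theorem (\cref{lemma:implicit}), which is exactly where \eqref{eq:vicinanza} and \eqref{eq:quasizero} enter.
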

The epiperimetric inequality was first introduced in the 60s by Reifenberg  (\cite{reifenberg}) in the context of minimal surfaces. More recently,  epiperimetric inequalities were proved for different free boundary problems (see e.g. \cite{wei99,gps16,fs16,gpps17,csv18,sv19,csv20,sv21,esv23,car23,car24,ov24}) and were used to deduce regularity results in different contexts.
\smallskip

The epiperimetric inequality from \cref{thm:epi}, together with the ones proved in \cite{csv20,car24} for the energy $W_{2m}$, provide a unified approach for the study of the integer frequencies in the thin obstacle problem, even for general obstacle $\vf\not\equiv0$. We stress that, there are two major differences between \eqref{eq:epi} and the epiperimetric inequalities from \cite{csv20,car24}. First, contrary to the log-epiperimetric inequalities from \cite{csv20,car24}, \eqref{eq:epi} provides a polynomial decay of the blow-up sequence. Second, in order to apply \cref{thm:epi}, one needs to verify that the closeness conditions \eqref{eq:vicinanza} and \eqref{eq:quasizero} remain valid along blow-up sequences; in \cref{prop:every-rescaled-fundamental} we show that these conditions are self-propagating, that is, they can be deduced from the epiperimetric inequality itself. 
Finally, we notice that in the forthcoming \cite{cc24} (a suitable generalization of) the epiperimetric inequality from \cref{thm:epi} will be one of the ingredients in the proof of a generic regularity result for solutions to the obstacle problem for the fractional Laplacian.\smallskip

We next apply the epiperimetric inequality to solutions $u$ to \eqref{def:sol-con-ostacolo} with general obstacle $\vf$ satisfying \eqref{e:hypo-phi}. 
First, as a consequence of \cref{thm:epi}, we obtain the uniqueness of the blow-up limits with a rate of the convergence. 
\begin{theorem}[Uniqueness of the blow-up limits and rate of convergence of the blow-up sequences]\label{prop:rate} Let $u$ be a solution to the thin obstacle problem \eqref{def:sol-con-ostacolo} with a $C^{k,\gamma}$-regular obstacle $\vf$ satisfying \eqref{e:hypo-phi}.
Let $2m+1\le k$, $ 0\in\Lambda_{2m+1}(u)$, and $v=u^{(0)}$ be given by \eqref{def:v}.

If $v\not\equiv0$,  $\|v\|_{L^2(\partial B_1)}\le1$ and \be\label{def:rescalings22}v_r(x):=\frac{v(rx)}{r^{2m+1}},
\ee
then there are a non-zero $p\in\mathcal P_{2m+1}$ and $\rho>0$ small enough such that $$\|v_r-p\|_{L^\infty(B_1)}\le Cr^{\alpha}\quad\text{for every}\quad r\in(0,\rho),$$ where  $\alpha>0$ depends only on $n$ and $m$ and $C>0$ depends only on $n$, $m$, $\varphi$, $k$ and $\gamma$.
\end{theorem}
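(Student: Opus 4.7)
The strategy is to combine the epiperimetric inequality of \cref{thm:epi} with a Weiss-type monotonicity formula for the truncated Weiss energy adapted to the right-hand side $h$ appearing in \eqref{def:v}, in order to obtain a Gronwall-type differential inequality that yields polynomial decay of the Weiss energy and, in turn, of the rescalings $v_r$. First I would set up the notation: let $c_r:=v_r|_{\partial B_1}$ and let $z_r(x):=|x|^{2m+1}c_r(x/|x|)$ be the $(2m+1)$-homogeneous extension. Since $0\in\Lambda_{2m+1}(u)$, the almost-monotonicity of the truncated frequency $\Phi^0(r,v)$ (coming from \cite{gr19} and \cref{prop:mono}) and the bound $2m+1\le k$ imply that $\{v_r\}$ is uniformly bounded in $H^1(B_1)$ and every blow-up limit lies in $\mathcal{P}_{2m+1}$. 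By the self-propagating property of the hypotheses (\cref{prop:every-rescaled-fundamental}), conditions \eqref{eq:vicinanza} and \eqref{eq:quasizero} remain valid for the normalized trace of $v_r$ for all $r$ below a threshold $\rho>0$, so \cref{thm:epi} supplies a competitor $\zeta_r$ with $\zeta_r=c_r$ on $\partial B_1$ and $W_{2m+1}(\zeta_r)\le(1-\kappa)W_{2m+1}(z_r)$.

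Next, I would write down a truncated Weiss energy $\widetilde W_{2m+1}(r,v)$ of the form considered in \cite{gr19,car24}, whose derivative along $r$ combines three contributions: the radial-derivative term (essentially $\tfrac{C}{r}\bigl(W_{2m+1}(z_r)-W_{2m+1}(v_r)\bigr)$ rewritten via integration by parts), an error of order $r^{2(k+\gamma-2m-1)-1}$ arising from $h$, and a positive sign thanks to the minimality of $u$ among admissible competitors with the same boundary trace. Comparing the admissible competitor $\zeta_r$ with $v_r$ on $B_1$ gives
\begin{equation*}
W_{2m+1}(v_r)\le W_{2m+1}(\zeta_r)+C r^{2(k+\gamma-2m-1)}\le (1-\kappa)W_{2m+1}(z_r)+Cr^{2(k+\gamma-2m-1)},
\end{equation*}
which, reinserted into the derivative formula, yields
\begin{equation*}
\frac{d}{dr}\widetilde W_{2m+1}(r,v)\ge \frac{\kappa'}{r}\widetilde W_{2m+1}(r,v)-Cr^{2(k+\gamma-2m-1)-1}.
\end{equation*}
Integrating this Gronwall-type inequality, I obtain $\widetilde W_{2m+1}(r,v)\le C r^{2\alpha}$ for some explicit $\alpha=\alpha(n,m)>0$ (depending on $\kappa$ and on the margin between $2m+1$ and $k+\gamma$).

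The final step is to convert the energy decay into a convergence rate. The identity $\tfrac{d}{dr}\bigl(r^{-(2m+1)}v(rx)\bigr)=r^{-(2m+2)}(x\cdot\nabla v-(2m+1)v)(rx)$ bounds $\|\partial_r v_r\|_{L^2(\partial B_1)}^2$ by $r^{-1}\widetilde W_{2m+1}(r,v)$, up to the same kind of $h$-error. Integrating in $r$ and using a dyadic telescoping argument gives $\|v_{r}-v_{2r}\|_{L^2(\partial B_1)}\le Cr^{\alpha}$, so $\{v_r\}$ is Cauchy in $L^2(\partial B_1)$ with polynomial rate and converges to a unique $p\in\mathcal{P}_{2m+1}$; the energy decay and homogeneity of $p$ upgrade this to convergence in $B_1$ in $L^2$. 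To pass from $L^2$ to $L^\infty(B_1)$, I would apply the optimal $C^{1,1/2}$ regularity of \cite{ac04} together with interior estimates for the thin obstacle problem with right-hand side to $v_r-p$ on slightly larger balls, absorbing the small loss in the exponent $\alpha$ if necessary.

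The main obstacle I expect is the careful bookkeeping of the error terms generated by the right-hand side $h$ with $|h(x)|\le C|x|^{k+\gamma-2}$: one must show that the contribution of $h$ to both the monotonicity formula and the comparison $W_{2m+1}(v_r)\le W_{2m+1}(\zeta_r)+\mathrm{error}$ remains strictly of lower order than the epiperimetric gain $\kappa\widetilde W_{2m+1}(r,v)$, which is precisely where the hypothesis $2m+1\le k$ and the choice of the parameter $\theta<\gamma$ in the definition of $\Phi^{x_0}$ enter. A secondary technical point is that $p$ need not have unit norm, so the normalization used when applying \cref{thm:epi} requires uniform two-sided control of $\|c_r\|_{L^2(\partial B_1)}$, which is again supplied by the non-degeneracy coming from the almost-monotonicity of $\Phi^0(r,v)$ at a point of frequency exactly $2m+1$.
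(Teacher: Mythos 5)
Your proposal is correct and follows essentially the same route as the paper: apply \cref{prop:every-rescaled-fundamental} to validate the hypotheses of \cref{thm:epi} at every scale, combine the epiperimetric inequality with the Weiss monotonicity formula of \cref{prop:weiss-formula} to get a Gronwall-type inequality and polynomial decay of $\widetilde W_{2m+1}$, then use the dyadic/Cauchy argument of \cref{prop:useful} for the $L^2(\partial B_1)$ rate and the $L^\infty$--$L^2$ interpolation of \cref{lemma:linftyl2} to upgrade to $L^\infty$. The normalization issue you flag at the end is exactly how the paper proceeds (it proves the decay for the double rescalings $(\widetilde v_\rho)_r$ and transfers it to $v_r$ via the scaling identity for $\widetilde W_{2m+1}$), so there is no gap.
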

The proof of \cref{prop:rate} follows from \cref{thm:epi} and \cref{prop:every-rescaled-fundamental} (see \cref{section5}). We notice that \cref{prop:rate} can also be obtained by combining \cref{thm:epi} with the uniqueness and the non-degeneracy results from \cite{frs20} for points of frequency $2m+1$, which guarantee that the closeness assumptions \eqref{eq:vicinanza} and \eqref{eq:quasizero} remain satisfied at every scale.\smallskip 

As a consequence of \cref{prop:rate} we obtain that for $2m+1\le k$, the $j$-strata of $\Lambda_{2m+1}(u)$ are contained in $C^{1,\alpha}$ manifolds of dimension $j$, for every $j=1,\ldots,n-1$. In the case $\vf\equiv0$, this stratification result was already established by Savin and Yu in \cite{sy23} via an improvement of flatness technique. 
\begin{corollary}[Stratification and rectifiability of the contact set $\Lambda_{2m+1}(u)$]\label{cor:stratification}
Let $u$ be a solution to the thin obstacle problem \eqref{def:sol-con-ostacolo}, with a $C^{k,\gamma}$-regular obstacle $\vf$ satisfying \eqref{e:hypo-phi}.
Then, for every $m\in\N$ such that $2m+1\le k$, the set $\Lambda_{2m+1}(u)$ is contained in the union of countably many manifolds of class $C^{1,\alpha}$ for some $\alpha>0$.
More precisely $$\Lambda_{2m+1}(u)=\bigcup_{j=0}^{n-1}\Lambda_{2m+1}^j(u)$$ where, for every $j=0,\ldots,n-1$ and every $x_0\in\Lambda_{2m+1}^j(u)$, there is a neighborhood $\mathcal U_{x_0}$ such that $\mathcal U_{x_0}\cap \Lambda_{2m+1}^j(u)$ is contained in a $j$-dimensional manifold of class $C^{1,\alpha}$.
\end{corollary}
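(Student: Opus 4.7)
The plan is to deduce the corollary from the Hölder convergence rate of blow-ups provided by \cref{prop:rate} via the classical stratification-plus-Whitney-extension scheme, following the template used for singular points in \cite{gp09,csv20,car24} and for odd frequencies in \cite{sy23}. To each $x_0 \in \Lambda_{2m+1}(u)$ associate the unique blow-up $p^{x_0} \in \mathcal P_{2m+1}$ furnished by \cref{prop:rate}, with its factorization $p^{x_0} = -|x_{n+1}|(p_0^{x_0}(x') + x_{n+1}^2 p_1^{x_0})$, and define its spine
\[
S(p^{x_0}) := \{\xi \in \R^{n+1} : p^{x_0}(\cdot + \xi) \equiv p^{x_0}\} \subset \{x_{n+1}=0\}.
\]
The strata are then given by $\Lambda_{2m+1}^j(u) := \{x_0 : \dim S(p^{x_0}) = j\}$ for $j=0,\dots,n-1$, with the top stratum corresponding to points where $p_0^{x_0}=T[p^{x_0}]$ retains a nontrivial $x'$-dependence.

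The second step is to upgrade the pointwise convergence rate of \cref{prop:rate} to a Hölder dependence of the blow-up on the base point. A standard triangle-inequality at the intermediate scale $r \simeq |x_0 - y_0|$, comparing the rescalings $\widetilde v_{x_0,r}$ and $\widetilde v_{y_0,r}$ of $v$, yields, for $x_0, y_0 \in \Lambda_{2m+1}^j(u)$ sufficiently close,
\[
\|p^{x_0} - p^{y_0}\|_{L^\infty(B_1)} \le C |x_0 - y_0|^{\alpha'},
\]
for some $\alpha' > 0$. Since the spine is lower-semicontinuous in $p$ and is pinned to dimension $j$ on the stratum, this upgrades to Hausdorff proximity of $S(p^{x_0})\cap B_1$ and $S(p^{y_0})\cap B_1$ at rate $|x_0 - y_0|^{\alpha'}$.

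The third and most delicate step is a Reifenberg-type flatness statement: for each $x_0 \in \Lambda_{2m+1}^j(u)$ and $r$ small,
\[
\Lambda_{2m+1}^j(u) \cap B_r(x_0) \subset \bigl\{y : \mathrm{dist}(y - x_0,\, S(p^{x_0})) \le C r^{1+\alpha}\bigr\}.
\]
Here \cref{prop:rate} controls $v = u^{(x_0)}$ by $r^{2m+1} p^{x_0}$ in $B_r(x_0)$ with Hölder error, while the self-propagation of the closeness condition \eqref{eq:quasizero} established in \cref{prop:every-rescaled-fundamental} guarantees that the rescaled solution $v_r$ vanishes on $\{T[p^{x_0}] \ge \delta\} \cap \partial B_1'$ uniformly at every scale. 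Any $y_0 \in \Lambda_{2m+1}^j(u) \cap B_r(x_0)$ whose direction $(y_0-x_0)/r$ lies outside a $Cr^\alpha$-neighborhood of $\{T[p^{x_0}] = 0\} \supset S(p^{x_0})$ would therefore contradict the quantitative positivity of $p^{x_0}$ away from its spine, forcing $y_0$ into a $Cr^{1+\alpha}$-tube around $x_0 + S(p^{x_0})$.

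Steps~2 and~3 together provide exactly the input required by the Whitney extension theorem on closed sets: a $C^{0,\alpha'}$-assignment of affine $j$-planes $x_0 \mapsto x_0 + S(p^{x_0})$ along $\Lambda_{2m+1}^j(u)$, together with a first-order Hölder approximation of the stratum by those planes. One concludes that each $\Lambda_{2m+1}^j(u)$ is locally contained in a $C^{1,\alpha}$ $j$-dimensional manifold, and covering by countably many such neighborhoods yields the statement. The main obstacle is Step~3: contrary to the singular (even frequency) case where the blow-up vanishes exactly on its spine, elements of $\mathcal P_{2m+1}$ vanish on the entire hyperplane $\{x_{n+1}=0\}$, so the spine cannot be read off from the contact set of the blow-up alone. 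One must instead exploit the nontrivial factor $T[p^{x_0}] = p_0^{x_0}$, and it is precisely here that the self-propagation of \eqref{eq:quasizero} via \cref{thm:epi} and \cref{prop:every-rescaled-fundamental} becomes essential.
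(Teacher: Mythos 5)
Your overall scheme --- a rate of convergence that is uniform over compact subsets of the stratum (from \cref{prop:rate}), H\"older dependence of the blow-up on the base point, a first-order approximation of the stratum by the spines, and finally the Whitney extension theorem plus the implicit function theorem --- is exactly the route the paper takes; the paper compresses everything after the uniform rate into a citation of \cite{gp09,csv20}. So Steps 1, 2 and 4 are fine and match the intended proof.

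Step 3, however, which you correctly single out as the crux, is not justified as written. The mechanism you invoke (a point $y_0$ with $T[p^{x_0}]\big((y_0-x_0)/r\big)\ge\delta$ is impossible) only controls the distance of $(y_0-x_0)/r$ to the zero set $\{T[p^{x_0}]=0\}$, and for $m\ge 2$ the zero set of a nonnegative $2m$-homogeneous polynomial is in general strictly larger than the spine $S(p^{x_0})$ and need not be a linear subspace; the claimed tube around $x_0+S(p^{x_0})$ therefore does not follow. Moreover, what Whitney's theorem actually needs for a family of degree-$2m$ polynomials is the full set of compatibility conditions $|D^\beta T[p^{x_0}](y_0-x_0)|\le C\,|y_0-x_0|^{2m-|\beta|+\alpha}$ for all $|\beta|\le 2m-1$, which for $m\ge 2$ is strictly stronger than proximity to the zero set (a point can lie on the zero set of $T[p^{x_0}]$ while an intermediate derivative of $T[p^{x_0}]$ is far from vanishing there to the required order). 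These conditions cannot be extracted from the sign of $T[p^{x_0}]$ alone; they come from the fact that the Almgren frequency at $y_0$ also equals $2m+1$: expanding $p^{x_0}$ around $y_0-x_0$, a non-vanishing derivative $D^\beta T[p^{x_0}](y_0-x_0)$ with $|\beta|\le 2m-1$ produces a term of homogeneity $|\beta|+1<2m+1$ in the expansion of $u^{(y_0)}$ near $y_0$, which is incompatible with the bound $H^{y_0}(s)\le Cs^{n+2(2m+1)}$ from \cref{prop:mono}. The same remark repairs the contradiction in your tube argument: if $T[p^{x_0}]\ge\delta$ at the rescaled point, then by the argument of \cref{lemma:B.3} the point lies in the contact set \emph{with a non-vanishing normal-derivative jump}, hence has frequency $1$ there --- it is this, not the ``positivity of $p^{x_0}$'', that contradicts $y_0\in\Lambda_{2m+1}(u)$ with $2m+1\ge 3$. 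Once Step 3 is replaced by these compatibility estimates, the Whitney extension and the implicit function theorem (using that $\dim S(p^{x_0})=j$ yields $n-j$ linearly independent vectors among $\{\nabla D^\beta T[p^{x_0}]:|\beta|=2m-1\}$) conclude as in the references cited by the paper.
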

We also use the epiperimetric inequality in \cref{thm:epi} and an epiperimetric inequality for negative energies $W_{2m+1}$ (see \cref{prop:epi-negative}) to give another proof of the frequency gap around the odd frequencies, which was first obtained in \cite{sy22}.
\begin{theorem}[Frequency gap]\label{prop:gap} 
   Let $\mathcal{A}_n$ as in \eqref{eq:a}, then
$$\mathcal{A}_n\cap \left((2m+1-c_{n,m}^-,2m+1)\cup(2m+1,2m+1+c_{n,m}^+)\right)=\emptyset,$$ 
     for some constants $c^\pm_{n,m}>0$, depending only on $n$ and $m$. 
\end{theorem}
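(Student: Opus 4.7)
The plan is to argue by contradiction: suppose $\mu = 2m+1+\eta \in \mathcal{A}_n$ for arbitrarily small $|\eta|$, and derive a contradiction by combining the epiperimetric inequality \cref{thm:epi} (and its negative-energy counterpart \cref{prop:epi-negative}) with the minimality of any $\mu$-homogeneous solution.

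Fix a $\mu$-homogeneous solution $u$ of \eqref{def:sol}, normalized by $\|u\|_{L^2(\partial B_1)}=1$, set $c := u|_{\partial B_1}$, and let $z(r,\theta):=r^{2m+1}c(\theta)$ be its $(2m+1)$-homogeneous extension. First I would verify the hypotheses of \cref{thm:epi} for $|\eta|$ small. The $L^2$-closeness \eqref{eq:vicinanza} to some $p\in\mathcal{P}_{2m+1}$ (with $\|p\|_{L^2(\partial B_1)}=1$) comes from a standard compactness-and-classification argument: otherwise there would be a sequence $\mu_n\to 2m+1$ of admissible frequencies with $\mu_n$-homogeneous solutions $u_n$ staying bounded away from $\mathcal{P}_{2m+1}$ in $L^2(\partial B_1)$; uniform $C^{1,1/2}$ estimates for solutions to the thin obstacle problem then yield a $(2m+1)$-homogeneous subsequential limit $u_\star$, which by the Figalli--Ros-Oton--Serra classification \cite{frs20} must belong to $\mathcal{P}_{2m+1}$, a contradiction. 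The quasi-zero condition \eqref{eq:quasizero} is then obtained from a non-degeneracy argument for the contact set near an odd-frequency blow-up: on the cone over $\{T[p]\ge\delta\}$ the polynomial $p$ is uniformly negative for $|x_{n+1}|$ bounded away from zero, so the closeness of $u$ to $p$ combined with $u(\cdot,0)\ge 0$ forces $u$ to vanish on the corresponding portion of $\{x_{n+1}=0\}$, hence $c\equiv 0$ on $\mathcal{Z}_\delta$.

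The heart of the argument is a Weiss-energy computation. Since $u$ is $\mu$-homogeneous and $\int_{B_1} u\,\Delta u=0$ (the measure $-\Delta u$ being supported where $u$ vanishes), integration by parts gives $\int_{B_1}|\nabla u|^2=\mu\int_{\partial B_1}u^2$, so
\[
W_{2m+1}(u) = (\mu-(2m+1))\int_{\partial B_1} u^2 = \eta.
\]
Writing $z$ in polar form and using the eigenvalue identity $\int_{\partial B_1}|\nabla_\theta c|^2=\mu(n+\mu-1)$ (which follows from $W_\mu(u)=0$), a direct computation yields
\[
W_{2m+1}(z) = \eta + \frac{\eta^2}{n+4m+1}.
\]
Now \cref{thm:epi} applies when $\eta>0$ (so $W_{2m+1}(z)>0$) and \cref{prop:epi-negative} when $\eta<0$, producing a competitor $\zeta$ with $\zeta|_{\partial B_1}=c$, $\zeta\ge 0$ on $B_1'$, and $\zeta$ even, satisfying $W_{2m+1}(\zeta)\le(1-\kappa)W_{2m+1}(z)$ (resp.\ $W_{2m+1}(\zeta)\le(1+\kappa)W_{2m+1}(z)$ in the negative case). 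Since $u$ minimizes the Dirichlet energy among admissible competitors with trace $c$, also $W_{2m+1}(u)\le W_{2m+1}(\zeta)$. For $\eta>0$ this gives
\[
\eta \;\le\; (1-\kappa)\!\left(\eta + \tfrac{\eta^2}{n+4m+1}\right),
\]
forcing $\eta\ge \tfrac{\kappa(n+4m+1)}{1-\kappa}=: c^+_{n,m}$; the analogous calculation for $\eta<0$ yields $-\eta\ge \tfrac{\kappa(n+4m+1)}{1+\kappa}=: c^-_{n,m}$, and in both cases this contradicts the initial smallness of $|\eta|$.

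The main obstacle in this plan is Step 1, more precisely the verification of the quasi-zero condition \eqref{eq:quasizero}: whereas the $L^2$-closeness \eqref{eq:vicinanza} is a soft compactness fact, ensuring that $c$ vanishes identically on $\mathcal{Z}_\delta$ (and not merely on average) requires a genuine contact-set non-degeneracy result for $\mu$-homogeneous solutions near odd frequencies, relying essentially on the classification of odd-frequency blow-ups in \cite{frs20}.
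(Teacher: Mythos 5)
Your proposal follows essentially the same route as the paper: contradiction via a sequence of $(2m+1+t_k)$-homogeneous solutions, compactness plus the classification of \cite{frs20} to get \eqref{eq:vicinanza}, the Weiss-energy identities of \cref{lemma:gap} (your formulas $W_{2m+1}(u)=\eta$ and $W_{2m+1}(z)=\eta+\eta^2/(n+4m+1)$ are exactly those), and then the two epiperimetric inequalities combined with minimality. Two points deserve correction. First, you quote \cref{prop:epi-negative} as giving a fixed gain $(1+\kappa)$; it actually gives the degenerate gain $(1+|W_{2m+1}(z)|)$, so your bound $-\eta\ge \kappa(n+4m+1)/(1+\kappa)$ is not available. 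The argument still closes, but differently: with the correct statement one gets $\eta\le(1-\eta+O(\eta^2))(\eta+\eta^2/(n+4m+1))$, and after dividing by $\eta<0$ the contradiction appears at order $\eta^2$ (the coefficient $1/(n+4m+1)-1$ is negative), exactly as in the paper's final display.

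Second, your verification of \eqref{eq:quasizero} is the genuine gap, and the heuristic you give does not suffice as stated: $L^\infty$-closeness of $u$ to $p$ (which vanishes on $\{x_{n+1}=0\}$) combined with $u(\cdot,0)\ge0$ only yields $0\le u\le\eps$ on the thin space over $\{T[p]\ge\delta\}$, not $u\equiv0$ there. Upgrading smallness to vanishing requires the barrier comparison of \cref{lemma:B.3} (modeled on \cite[Lemma B.3]{frs20}): one slides the strictly superharmonic paraboloid $\phi_C(x)=-(n+1)|x_{n+1}|^2+|x'|^2+C$ down onto $u$ near a point $z$ with $T[p](z')\ge\delta/3^{2m}$, observes that a touching point can occur neither off the contact set (where $\Delta u=0$ but $\Delta\phi_C=-2$) nor on it (where $\phi_{C_*}>0$ is impossible), and concludes $u(z)\le\phi_0(0)=0$. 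Since that lemma is already available in the paper, the gap is fillable by citing it, but as written your step from ``$p$ is negative off the thin space'' to ``$u$ vanishes on the thin space'' is not a proof.
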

Even in this case, with the analogous result for even frequencies in \cite{csv20}, we get a unified epiperimetric inequality approach for the frequency gap around integer frequencies.\smallskip

Soon after the present paper was published as a preprint, Franceschini and Savin proved in \cite{FranceschiniSavin2024} that there no admissible frequencies in the intervals of the form $(2m,2m+1)$. This improves the lower bound in \cref{prop:gap} to $c_{n,m}^-=1$, which is also optimal.

\subsection{Plan of the paper}\label{sub:plan-of-the-paper}  In \cref{section2} we recall the truncated Almgren's frequency function, the blow-ups and the Weiss' energy for solutions with obstacle $\vf\not \equiv0$. 

In \cref{section3} we prove the epiperimetric inequality for $W_{2m+1}$, i.e.~\cref{thm:epi}. The strategy is to decompose the trace using the eigenfunctions of spherical Laplacian $\Delta_{\mathbb{S}^n}$. 
We will use the eigenfunctions of the half-sphere for the lower modes and the eigenfunctions which are $0$ on the set $\mathcal{Z}_\delta$ (defined in \eqref{eq:quasizero}) for the higher modes.
We construct this decomposition by using the implicit function theorem (see \cref{lemma:implicit}). We then define the competitor $\zeta$ by increasing the homogeneity of the higher modes and we prove that $\zeta$ satisfies the epiperimetric inequality \eqref{eq:epi} by using \cref{lemma:lower}, \cref{lemma:higher} and \cref{lemma:double-product}.

In \cref{section4} we prove that we can apply the epiperimetric inequality to solutions of \eqref{def:sol} at every scale. The point here is that the epiperimetric inequality provides a control on the oscillation of the rescalings $v_r(x)=r^{-(2m+1)}v(rx)$, while on the other hand the information about the frequency at the point $x_0=0$ is contained in the rescalings $\widetilde v_\rho(x)=\|v(\rho\cdot)\|_{L^2(\partial B_1)}^{-1}v(\rho x)$, which converge to homogeneous solutions of unit $L^2(\partial B_1)$ norm. In order to apply the epiperimetric inequality at every scale we need to show that the conditions \eqref{eq:vicinanza} and \eqref{eq:quasizero} are satisfied at every scale. 

We choose $\rho$ small enough such that $\widetilde v_\rho$ is close to a homogeneous global solution. Then, we consider the double rescalings $(\widetilde v_\rho)_r\in H^1(B_1)$ defined in  \eqref{eq:doublerescalings}. Using the oscillation control provided by the epiperimetric inequality, we show that the traces $(\widetilde v_\rho)_r|_{\partial B_1}$ satisfy the conditions \eqref{eq:vicinanza} and \eqref{eq:quasizero}, so we can apply the epiperimetric inequality to $(\widetilde v_\rho)_r|_{\partial B_1}$ for all $r\in(0,1)$.

In \cref{section5} we prove the uniqueness of blow-up limit (\cref{prop:rate}) and the stratification of the contact set (\cref{cor:stratification}). We notice that, once we know that we can apply \cref{thm:epi} at every scale, these results are a standard application of the epiperimetric inequality.

Finally, in \cref{section6} we prove an epiperimetric inequality for negative energies $W_{2m+1}$ (see \cref{prop:epi-negative}) and use it, together with \cref{thm:epi}, to obtain the frequency gap in \cref{prop:gap}.
 \subsection{Notations} \label{notations}
Given $x\in\R^{n+1}$, we write $x=(x',x_{n+1})$, with $x'\in\R^n$ and $x_{n+1}\in\R$.\\
For any set $A\subset\R^{n+1}$, we will use the notation 
$$A^+:=A\cap\{x_{n+1}>0\}\quad\text{and}\quad A':=A\cap\{x_{n+1}=0\}.$$
We will write $m\in\N_{\ge j}$ if $m$ is an integer and $m\ge j$. From now, by $m$ we will denote only integers in $\N_{\ge 0}$. 
 \subsection*{Acknowledgement} The authors are supported by the European Research Council (ERC), through the European Union's Horizon 2020 project ERC VAREG - \it Variational approach to the regularity of the free boundaries \rm (grant agreement No. 853404); they also acknowledge the MIUR Excellence Department Project awarded to the Department of Mathematics, University of Pisa, CUP I57G22000700001. B.V. acknowledges also support from the projects PRA 2022 14 GeoDom (PRA 2022 - Università di Pisa) and MUR-PRIN ``NO3'' (No. 2022R537CS).
\section{Preliminaries}\label{section2}
\subsection{Almgren's frequency function and blow-ups}
We recall the following two propositions from \cite{gr19}. 
\begin{proposition}[Truncated Almgren's frequency function]\label{prop:mono} Let $u$ be a solution to the thin obstacle problem \eqref{def:sol-con-ostacolo}, with obstacle $\vf$ satisfying \eqref{e:hypo-phi}. Let $v=u^{(x_0)}$ given by \eqref{def:v} with $x_0\in\Lambda(u)$. Let $\theta\in(0,\gamma)$, we define \bea\Phi^{x_0}(r,v):=(r+C_{\Phi}r^{1+\theta})\frac{d}{dr}\log\max\{H^{x_0}(r,v),r^{n+2(k+\gamma-\theta)}\},\eea and $$H^{x_0}(r,v):=\int_{\partial B_r(x_0)}v^2\,d\HH^n\quad\text{and}\quad \mathcal{I}^{x_0}(r,v):=\int_{\partial B_r(x_0)}v\partial_{\nu}v\,d\HH^n.$$
 We drop the dependence on $x_0$ if $x_0=0$.
 
 If $C_{\Phi}>0$ is large enough, then there is $r_0>0$ such that $$r\mapsto \Phi^{x_0}(r,v)\quad\text{is non-decreasing for every $r\in(0,r_0)$.}$$
	Moreover if $x_0\in\Lambda_\mu(u)$, with $\mu<k+\gamma$, then, for every $\eps>0$ \bea r\mapsto \frac{H^{x_0}(r,v)}{r^{n+2\mu}}\quad\text{is non-decreasing for every $r\in(0,r_0)$},\eea \bea r\mapsto \frac{H^{x_0}(r,v)}{r^{n+2\mu+\eps}}\quad\text{is non-increasing for every $r\in(0,r_\eps)$}\eea and $$\phi^{x_0}(r,v)=(1+C_{\Phi}r^{\theta})\left(n+2r\frac{\mathcal{I}^{x_0}(r,v)}{H^{x_0}(r,v)}\right)\quad\text{for every}\quad r\in(0,r_0).$$
 In particular, the rescalings \bea\label{rescaling}\widetilde v_{x_0,r}(x):=\frac{v(x_0+rx)}{\|v(x_0+r\cdot)\|_{L^2(\partial B_1)}}\eea converge in $C^{1,\alpha}(B_1^+)$, as $r\to0^+$, up to subsequences, to some function $v_{0}$ which is a solution to the thin obstacle problem \eqref{def:sol} and it is $\mu$-homogeneous.
	\end{proposition}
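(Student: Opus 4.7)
The plan is to follow the classical Almgren--Garofalo monotonicity scheme, adapted to this thin-obstacle problem with right-hand side $h$ satisfying $|h(x)| \le C|x-x_0|^{k+\gamma-2}$; I take $x_0 = 0$ without loss of generality. The key auxiliary quantities are $H(r) = \int_{\partial B_r} v^2\, d\HH^n$, $D(r) = \int_{B_r}|\nabla v|^2\, dx$, $\mathcal{I}(r) = \int_{\partial B_r} v\partial_\nu v\, d\HH^n$, and the non-truncated frequency $N(r) = rD(r)/H(r)$. First I would derive the standard differential identities $H'(r) = (n/r)H(r) + 2\mathcal{I}(r)$ by a Pohozaev computation and $D(r) = \mathcal{I}(r) - \int_{B_r} v h\, dx$ by integration by parts (the complementarity condition $v(\Delta v - h) = 0$ holds since $\Delta v = h$ off the contact set and $v \equiv 0$ on it). Differentiating $D$ once more, combining with the Cauchy--Schwarz inequality $\mathcal{I}(r)^2 \le H(r)\int_{\partial B_r}(\partial_\nu v)^2$, and isolating the boundary and volume terms that involve $h$ gives the classical monotonicity of $N$ modulo correction terms of the form $\int vh$ and $\int(\nabla v\cdot x)h$.

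The second step is to control these $h$-errors and see why the specific truncation and weight are chosen. Using $|h(x)| \le C|x|^{k+\gamma-2}$ and Cauchy--Schwarz one bounds the corrections by $r^{k+\gamma-2}$ times geometric quantities comparable to $H(r)^{1/2} r^{(n+1)/2}$; in the regime $H(r) \ge r^{n+2(k+\gamma-\theta)}$ these errors are of relative size $O(r^\theta)$, which is then absorbed by the multiplicative weight $(1 + C_\Phi r^\theta)$ once $C_\Phi$ is chosen large enough (depending on $n$, $\vf$, $k$, $\gamma$). In the complementary regime $H(r) < r^{n+2(k+\gamma-\theta)}$, the truncated $\Phi$ reduces to $(1+C_\Phi r^\theta)(n+2(k+\gamma-\theta))$, which is trivially non-decreasing. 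Gluing the two regimes gives the almost-monotonicity of $\Phi$ on $(0,r_0)$. From this, integrating the ODI for $\log H$ using $\Phi \ge n+2\mu$ at points of $\Lambda_\mu(u)$ yields that $H(r,v)/r^{n+2\mu}$ is non-decreasing; symmetrically, $\Phi(r,v) \le n+2\mu + O(r^\theta) \le n+2\mu+\eps$ for $r$ small (depending on $\eps$) yields the non-increase of $H(r,v)/r^{n+2\mu+\eps}$, and the identity for $\phi$ is just a rewriting of $H'/H = n/r + 2\mathcal{I}/H$ together with the definition of $\Phi$ on the non-truncated branch.

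For the convergence of the rescalings, the upper bound on $H(r,v)/r^{n+2\mu+\eps}$ combined with a Caccioppoli inequality gives uniform $H^1(B_R)$ bounds on $\widetilde v_{x_0,r}$; the optimal $C^{1,1/2}(B_1^+ \cup B_1')$ regularity for the thin obstacle problem, applied to the rescalings (whose right-hand side is $r^{2-\mu}h(r\cdot)/\|v(r\cdot)\|_{L^2(\partial B_1)}$, which tends to $0$ since $\mu < k+\gamma$), upgrades this to $C^{1,\alpha}_{\loc}$ compactness, so any cluster point $v_0$ solves \eqref{def:sol}. Finally, the fact that $\Phi(r,v_0) \equiv n+2\mu$ for the limit forces equality in Cauchy--Schwarz in the frequency identity for $v_0$, hence $\mu$-homogeneity. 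The main obstacle I expect is the bookkeeping in Step 2: carefully verifying that both $\int vh$ and $\int(\nabla v\cdot x)h$ are of size exactly $O(r^\theta)$ relative to the leading terms in the truncated regime. This computation is what dictates the truncation exponent $n+2(k+\gamma-\theta)$ (any weaker truncation would not absorb the errors), the constraint $\theta\in(0,\gamma)$, and the specific form of the weight $(r + C_\Phi r^{1+\theta})$.
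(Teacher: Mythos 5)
The paper offers no proof of this proposition: it is recalled verbatim from \cite{gr19}, so there is no in-paper argument to compare against, and your outline reproduces the standard Almgren--Garofalo scheme (Rellich--Pohozaev identities plus the complementarity condition, with the truncation at $r^{n+2(k+\gamma-\theta)}$ and the weight $(1+C_{\Phi}r^{\theta})$ absorbing the $O(r^{\theta})$ relative errors generated by $h$), which is exactly the strategy of the cited proof. The only steps you gloss over are the gluing of the two regimes at the truncation threshold and the passage from the weighted bound $\Phi\ge n+2\mu$ to the unweighted monotonicity of $H(r)/r^{n+2\mu}$ (as written, $\Phi(r)\ge n+2\mu$ only yields $\frac{d}{dr}\log H\ge \frac{n+2\mu}{r(1+C_{\Phi}r^{\theta})}$); both points need a little more care and are handled in \cite{gr19}.
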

\begin{proposition}\label{prop:blow-up-homo2}
Let $u$ be a solution to the thin obstacle problem \eqref{def:sol-con-ostacolo}, with obstacle $\vf$ satisfying \eqref{e:hypo-phi}. Let $v=u^{(0)}$ given by \eqref{def:v}. Suppose that, for some $\rho_0>0$, we have that $$H(2,v_{r})\le H_0\quad\text{and}
\quad \phi(2r,v)\le \phi_0\quad\text{for every}\quad r\in(0,\rho_0),$$ where
			 \be\label{def:rescalings2}v_{r}(x):=\frac{v(rx)}{r^\mu},\quad r\in(0,\rho_0].\ee
 Then $$\|v_r\|_{C^{1,\frac12}(B_{3/2}^+)}\le C\quad\text{for every}\quad r\in(0,\rho_0]$$ and for some constant $C>0$, depending only on $H_0$, $\phi_0$, $n$, $\mu$, $\varphi$, $k$, $\gamma$. The same inequality holds if we replace $v$ with $\widetilde v_\rho$, the rescalings in \cref{prop:mono}.
\end{proposition}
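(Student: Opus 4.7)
The plan is to recast $v_r$ as a solution of a thin obstacle problem with right-hand side uniformly bounded in $r\in(0,\rho_0]$ and with uniformly bounded $L^2(B_2)$ norm, and then to invoke the quantitative interior $C^{1,1/2}$ regularity theory. A direct rescaling shows that $v_r$ solves a problem of type \eqref{def:v} in $B_{1/r}$ with right-hand side $h_r(x):=r^{2-\mu}h(rx)$. Using the pointwise bound $|h(y)|\le C|y|^{k+\gamma-2}$ (which holds because $\vf\in C^{k,\gamma}$ with $k\ge 2$), one gets $|h_r(x)|\le Cr^{k+\gamma-\mu}|x|^{k+\gamma-2}$ for $x\in B_{1/r}$, and since $\mu<k+\gamma$ and $r\le\rho_0$ this gives a uniform bound $\|h_r\|_{L^\infty(B_2)}\le C_1$ with $C_1$ depending only on $\rho_0,n,\mu,\vf,k,\gamma$.

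To control $\|v_r\|_{L^2(B_2)}$, I would use the identity $H(s,v_r)=r^{-(n+2\mu)}H(rs,v)$ to translate the almost-monotonicity of $r\mapsto H(r,v)/r^{n+2\mu}$ from \cref{prop:mono} into an almost-monotonicity of $s\mapsto H(s,v_r)/s^{n+2\mu}$ on $(0,2]$; the hypothesis $\phi(2r,v)\le\phi_0$ provides the uniform control on the multiplicative error coming from the truncation in $\Phi$. Combined with $H(2,v_r)\le H_0$, this yields
$$H(s,v_r)\le C_2\,s^{n+2\mu}\qquad\text{for every } s\in(0,2],$$
with $C_2$ depending on $H_0,\phi_0,n,\mu$, and integration in $s$ produces a uniform bound $\|v_r\|_{L^2(B_2)}\le C_3$.

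Finally, I would invoke the interior optimal regularity for solutions of the thin obstacle problem with bounded right-hand side (see \cite{css08,bfr18}, in the form used by \cite{gr19}), namely a quantitative estimate of the type
$$\|v_r\|_{C^{1,1/2}(B_{3/2}^+)}\le C\bigl(\|v_r\|_{L^2(B_2)}+\|h_r\|_{L^\infty(B_2)}\bigr),$$
which together with the previous two steps delivers the claim for $v_r$. The statement for $\widetilde v_\rho$ follows by the same argument: $\widetilde v_\rho$ is a multiplicative normalization of a rescaling of $v$ solving an equation of the same structure, and the normalization constant $\rho^{-n/2}\|v(\rho\cdot)\|_{L^2(\partial B_1)}^{-1}$ is comparable to $\rho^{-\mu}$ thanks to the two-sided control on $H(\rho,v)/\rho^{n+2\mu}$ from \cref{prop:mono}. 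The main technical point is the quantitative $C^{1,1/2}$ estimate with constants depending only on the listed data, which is exactly what the cited regularity theory provides; everything else reduces to rescaling and to the almost-monotonicity already stated in \cref{prop:mono}.
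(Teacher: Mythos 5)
The paper offers no proof of this proposition (it is recalled from \cite{gr19}), and your argument is precisely the standard one behind it: rescaling turns $v_r$ into a solution of a thin obstacle problem whose right-hand side $h_r(x)=r^{2-\mu}h(rx)$ is uniformly bounded on $B_2$ because $\mu<k+\gamma$, the hypotheses on $H(2,v_r)$ and $\phi(2r,v)$ together with the almost-monotonicity of $s\mapsto H(s,v)/s^{n+2\mu}$ from \cref{prop:mono} give a uniform $L^2(B_2)$ bound, and the quantitative interior $C^{1,\frac12}$ estimate of \cite{css08,bfr18,gr19} then yields the claim. The one imprecise point is your assertion that the normalization $\|v(\rho\cdot)\|_{L^2(\partial B_1)}$ is comparable to $\rho^{\mu}$: \cref{prop:mono} only provides the lower bound $H(\rho,v)\gtrsim_{\eps}\rho^{n+2\mu+\eps}$ for each $\eps>0$ (with a constant depending on $v$), but since $\mu<k+\gamma$ this weaker bound still makes the rescaled right-hand side for $\widetilde v_\rho$ uniformly bounded, so the argument goes through unchanged.
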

 \subsection{Weiss' energy $\widetilde W_\mu$}
Let $u$ be a solution to \eqref{def:sol-con-ostacolo}, with obstacle $\vf$ satisfying \eqref{e:hypo-phi}. Let $v=u^{(0)}$ given by \eqref{def:v} with $0\in\Lambda_\mu(u)$, we consider the following Weiss' energy for the problem \eqref{def:v} with right hand side
 \bea \widetilde W_\mu(v):=W_\mu(v)+\int_{B_1}vh\,dx,\eea where $W_\mu$ is the Weiss' energy in \eqref{weiss}. 
We recall the following results from \cite{gpps17,car24}.
 \begin{proposition}[Monotonicity of the Weiss' energy $\widetilde W_\mu$]\label{prop:monotonicity-weiss-tilde} Let $u$ be a solution to \eqref{def:sol-con-ostacolo}, with obstacle $\vf$ satisfying \eqref{e:hypo-phi}. Let $v=u^{(0)}$ given by \eqref{def:v} with $0\in\Lambda_\mu(u)$ and $\mu<k+\gamma$. Then \bea\label{monoW}\frac{d}{dr}\left(\widetilde W_\mu(v_r)+C_{\widetilde W}r^{k+\gamma-\mu}\right)\ge\frac{2}{r}\int_{\partial B_1} (\nabla v_r\cdot \nu -\mu v_r)^2\,d\mathcal{H}^n, \quad\text{for every} \quad r\in(0,1),\eea where $v_r$ is as in \eqref{def:rescalings2}, for some constant $C_{\widetilde W}=C_{\widetilde W}(v)>0$, depending only on $v$, $n$, $\mu$, $\varphi$, $k$ and $\gamma$. Moreover $C_{\widetilde W}(\widetilde v_\rho)\to0^+$ as $\rho\to0^+$, where $\widetilde v_\rho$ are the rescalings in \cref{prop:mono}.
\end{proposition}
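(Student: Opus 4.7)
The strategy is the standard one for Weiss-type monotonicity formulas in free boundary problems with right-hand side: compute the derivative $\frac{d}{dr}\widetilde W_\mu(v_r)$ explicitly, complete the square to isolate the main term $\frac{2}{r}\int_{\partial B_1}(\partial_\nu v_r-\mu v_r)^2\,d\mathcal{H}^n$, and absorb the lower-order contributions coming from $h$ into the monomial correction $C_{\widetilde W}r^{k+\gamma-\mu}$. The first step is to rewrite $\widetilde W_\mu(v_r)$ in the invariant form using $D(r):=\int_{B_r}|\nabla v|^2\,dx$, $H(r):=\int_{\partial B_r}v^2\,d\mathcal{H}^n$, $G(r):=\int_{B_r}vh\,dx$, and $\mathcal{I}(r):=\int_{\partial B_r}v\,\partial_\nu v\,d\mathcal{H}^n$. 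Integration by parts on $B_r$ together with the crucial observation that the contact measure $h-\Delta v\ge 0$ is supported on $\{v=0\}$, hence vanishes against $v$, yields $D(r)=\mathcal{I}(r)-G(r)$, so that a change of variables gives
\begin{equation*}
\widetilde W_\mu(v_r)=r^{1-2\mu-n}\mathcal{I}(r)-\mu r^{-2\mu-n}H(r).
\end{equation*}

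Differentiating in $r$ and using $H'(r)=\tfrac{n}{r}H(r)+2\mathcal{I}(r)$ together with the Rellich-type identity
\begin{equation*}
\mathcal{I}'(r)=\frac{n-1}{r}\mathcal{I}(r)+2\int_{\partial B_r}(\partial_\nu v)^2\,d\mathcal{H}^n-\int_{B_r}\bigl(2(x\cdot\nabla v)\,h+(n-1)vh\bigr)\frac{dx}{r}+\int_{\partial B_r}vh\,d\mathcal{H}^n,
\end{equation*}
obtained from $\Delta v=h$ on $B_r\setminus\{v=0\}$ and the same vanishing of the contact measure against $x\cdot\nabla v$ (which is tangential on $\{v=0\}\subset\{x_{n+1}=0\}$), all terms that do not involve $h$ reassemble, after an elementary algebraic identity, into the nonnegative square
\begin{equation*}
\frac{2}{r^{1+2\mu+n}}\int_{\partial B_r}(\partial_\nu v-\tfrac{\mu}{r}v)^2\,d\mathcal{H}^n=\frac{2}{r}\int_{\partial B_1}(\nabla v_r\cdot\nu-\mu v_r)^2\,d\mathcal{H}^n,
\end{equation*}
exactly as in the obstacle-free Weiss formula.

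It remains to estimate the error $E(r)$ produced by the $h$-terms. From $|h(x)|\le C|x|^{k+\gamma-2}$ and from the Almgren-based growth/$C^{1,\frac12}$ bounds in \cref{prop:mono,prop:blow-up-homo2} (which give $|v|+r|\nabla v|\le Cr^\mu$ on $B_r$), one checks by direct size estimate that $|E(r)|\le C(v)\,r^{k+\gamma-\mu-1}$, where $C(v)$ depends on an $L^\infty$ or $C^{1,1/2}$ bound on $v$. Choosing $C_{\widetilde W}(v)$ so that $C_{\widetilde W}(v)(k+\gamma-\mu)\ge C(v)$ produces the stated inequality. For the final claim, applied to $\widetilde v_\rho$ one has $\Delta\widetilde v_\rho=\rho^{2}h(\rho\cdot)/\|v(\rho\cdot)\|_{L^2(\partial B_1)}$; combined with the non-degeneracy bound $\|v(\rho\cdot)\|_{L^2(\partial B_1)}\ge c\rho^{\mu}$ (which follows from the lower frequency bound in \cref{prop:mono}), the effective size of the forcing for $\widetilde v_\rho$ is $O(\rho^{k+\gamma-\mu})$, hence $C_{\widetilde W}(\widetilde v_\rho)\to 0$ as $\rho\to 0^+$.

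The only nontrivial point is the justification of integration by parts, i.e.\ verifying that the contact measure $h-\Delta v$ makes no boundary contribution when paired with either $v$ or $x\cdot\nabla v$. For $v$ this is immediate since $v$ vanishes on the support of the measure; for $x\cdot\nabla v$ one uses that the measure is supported on the thin set $\{x_{n+1}=0,\,v(x',0)=0\}$, on which $x\cdot\nabla v=x'\cdot\nabla_{x'}v=0$ a.e., since $v\equiv 0$ on the (relative) interior of $\Lambda(v)$ and the free boundary is negligible. This technicality aside, the rest is a routine, if lengthy, differentiation and absorption argument, so I would cite \cite{gpps17,car24} for the analogous calculations already carried out in the same setting.
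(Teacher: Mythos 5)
The paper does not prove this proposition: it is stated as a recalled result from \cite{gpps17,car24}, so there is no internal argument to compare against. Your reconstruction is the standard Weiss-type computation carried out in those references — rewrite $\widetilde W_\mu(v_r)$ via $D(r)=\mathcal I(r)-G(r)$, differentiate using $H'=\frac nr H+2\mathcal I$ and the Rellich identity, complete the square, and absorb the $h$-terms (each of size $O(r^{k+\gamma-\mu-1})$ after rescaling) into $\frac{d}{dr}\bigl(C_{\widetilde W}r^{k+\gamma-\mu}\bigr)$ — and the identities you write are correct, including the justification that the contact measure pairs to zero against both $v$ and $x\cdot\nabla v$. The only imprecision is the non-degeneracy bound in the last step: the almost-monotonicity in \cref{prop:mono} gives $\|v(\rho\cdot)\|_{L^2(\partial B_1)}\ge c_\eps\rho^{\mu+\eps}$ for every $\eps>0$, not $\ge c\rho^{\mu}$; since $\mu<k+\gamma$ strictly, choosing $\eps$ small still yields $C_{\widetilde W}(\widetilde v_\rho)\to0^+$, so the conclusion stands.
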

 \begin{proposition}\label{prop:weiss-formula}  Let $u$ be a solution to \eqref{def:sol-con-ostacolo}, with obstacle $\vf$ satisfying \eqref{e:hypo-phi}. Let $v=u^{(0)}$ given by \eqref{def:v} with $0\in\Lambda_\mu(u)$ and $\mu<k+\gamma$. If $c_r:=v_r|_{\partial B_1}\in H^1(\partial B_1)$ is the trace of $v_r$, with $v_r$ as in \eqref{def:rescalings2}, then
   $$ \frac{d}{dr}\left(\widetilde W_\mu(v_r)+C_{\widetilde W}r^{k+\gamma-\mu}\right)\ge \frac{n+2\mu-1}{r}(W_\mu(z_r)-\widetilde W_\mu(v_r))+\frac{1}r\int_{\partial B_1}(\nabla v_r\cdot \nu-\mu v_r)^2\,d\HH^n,$$ for $r\in(0,1)$, where $z_r$ is the $\mu$-homogeneous extension of $c_r$ in $\R^{n+1}$.
\end{proposition}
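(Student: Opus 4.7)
The plan is to derive the formula as an (essentially) explicit computation that yields an equality up to an error term involving $h$; the claimed inequality then follows by absorbing that error into the monotone correction $C_{\widetilde W}r^{k+\gamma-\mu}$.

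First I would rewrite both sides in terms of the scalar quantities $H(r)=\int_{\partial B_r}v^2$, $I(r)=\int_{B_r}|\nabla v|^2$, $J(r)=\int_{B_r}vh$ and $\mathcal{I}(r)=\int_{\partial B_r}v\partial_\nu v$. Integration by parts in $B_1$, together with $\Delta v_r=h_r$, gives
\[
\widetilde W_\mu(v_r)=\int_{\partial B_1} c_r(\partial_\nu v_r-\mu c_r)=r^{1-n-2\mu}\mathcal{I}(r)-\mu\, r^{-n-2\mu}H(r),
\]
while a direct calculation in polar coordinates, combined with tangential integration by parts on $\partial B_1$, yields the familiar identity
\[
(n+2\mu-1)\,W_\mu(z_r)=\int_{\partial B_1}|\nabla_\omega c_r|^2-\mu(n+\mu-1)\int_{\partial B_1}c_r^2.
\]

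Next I would differentiate the boundary expression for $\widetilde W_\mu(v_r)$ in $r$ using the elementary identities $H'(r)=\tfrac{n}{r}H(r)+2\mathcal{I}(r)$ and $J'(r)=\int_{\partial B_r}vh$, together with the Pohozaev--Rellich identity
\[
rI'(r)=2r\int_{\partial B_r}(\partial_\nu v)^2+(n-1)I(r)-2\int_{B_r}(x\cdot\nabla v)h,
\]
and I would similarly rewrite $\int_{\partial B_1}|\nabla_\omega c_r|^2$ by applying the Pohozaev identity directly to $v_r$ on $B_1$ and expanding $|\nabla v_r|^2=(\partial_\nu v_r)^2+|\nabla_\omega c_r|^2$ on $\partial B_1$. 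Collecting the resulting terms, the coefficients of $I(r)$, $H(r)$, $\int_{\partial B_r}(\partial_\nu v)^2$ and $\int_{B_r}(x\cdot\nabla v)h$ on the two sides of the target formula match exactly, leaving the clean identity
\[
r\,\frac{d}{dr}\widetilde W_\mu(v_r)=(n+2\mu-1)\bigl(W_\mu(z_r)-\widetilde W_\mu(v_r)\bigr)+\int_{\partial B_1}(\partial_\nu v_r-\mu v_r)^2+r^{2-n-2\mu}\int_{\partial B_r}vh.
\]

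Finally, the uniform bounds $|v(x)|\le C|x|^\mu$ provided by \cref{prop:blow-up-homo2} and $|h(x)|\le C|x|^{k+\gamma-2}$ from \eqref{def:v} give
\[
\Bigl|r^{2-n-2\mu}\int_{\partial B_r}vh\Bigr|\le Cr^{k+\gamma-\mu}.
\]
Dividing the identity by $r$ and absorbing this remainder into $\frac{d}{dr}\bigl(C_{\widetilde W}r^{k+\gamma-\mu}\bigr)$, which is possible provided $C_{\widetilde W}$ is chosen large enough (consistently with \cref{prop:monotonicity-weiss-tilde}), yields the claimed inequality. The main obstacle is the careful bookkeeping of Pohozaev terms so that the coefficients of $I(r)$ and $J(r)$ cancel precisely, leaving only the surface integral $\int_{\partial B_r}vh$ as the remainder; any miscounting would produce an error of the wrong order that could not be absorbed into $C_{\widetilde W}r^{k+\gamma-\mu}$.
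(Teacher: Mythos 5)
Your proof is correct and is essentially the standard derivation: the paper does not prove this proposition itself but recalls it from \cite{gpps17,car24}, where precisely this computation is carried out — the boundary representation $\widetilde W_\mu(v_r)=\int_{\partial B_1}c_r(\partial_\nu v_r-\mu c_r)$, the identities for $H'$, $\mathcal I'$ and the Pohozaev--Rellich identity, the spherical-coordinate formula for $W_\mu(z_r)$, and the absorption of the resulting remainder $r^{2-n-2\mu}\int_{\partial B_r}vh=O(r^{k+\gamma-\mu})$ into the derivative of $C_{\widetilde W}r^{k+\gamma-\mu}$. The one point you should make explicit is that $\Delta v=h$ only off the contact set, so both in the integration by parts and in the Pohozaev identity you must use that the singular non-positive part of $\Delta v-h$ is carried by $\Lambda(v)\subset\{x_{n+1}=0\}$, where $v$ and $x\cdot\nabla v$ vanish, hence contributes nothing.
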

\begin{proposition}\label{prop:useful} 
    Let $u$ be a solution to the thin obstacle problem \eqref{def:sol-con-ostacolo} with $\vf$ satisfying \eqref{e:hypo-phi}. Suppose that $0\in \Lambda_\mu(u)$, with $\mu< k+\gamma$. Let $v=u^{(0)}$ given by \eqref{def:v} and $v_r$ are the rescalings in \eqref{def:rescalings2}, then $$\int_{\partial B_1}|v_r-v_{r'}|\,d\HH^n\le C \log\left(\frac r{r'}\right)^{1/2}\left(\widetilde W_\mu(v_r)+C_{\widetilde W}r^{k+\gamma-\mu}\right)^{1/2}$$ for every $ 0<r'\le r\le 1$ and for some dimensional constant $C>0$.
\end{proposition}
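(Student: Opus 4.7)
The plan is to write the difference $v_r - v_{r'}$ as a time integral of the radial derivative of the rescalings, and then combine a double Cauchy--Schwarz with the monotonicity formula from \cref{prop:monotonicity-weiss-tilde}.

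A direct computation from $v_s(x) = s^{-\mu}v(sx)$ gives the pointwise identity
\[
\frac{d}{ds}v_s(x) = \frac{1}{s}\bigl(\nabla v_s(x)\cdot x - \mu v_s(x)\bigr),
\]
so on $\partial B_1$ (where $x=\nu$) we have
\[
v_r - v_{r'} = \int_{r'}^{r}\frac{1}{s}\bigl(\nabla v_s\cdot \nu - \mu v_s\bigr)\,ds.
\]
Integrating the absolute value over $\partial B_1$, exchanging the order of integration, and applying Cauchy--Schwarz on the sphere (absorbing $|\partial B_1|^{1/2}$ into a dimensional constant), I get
\[
\int_{\partial B_1}|v_r - v_{r'}|\,d\HH^n \le C_n\int_{r'}^{r}\frac{1}{s}\left(\int_{\partial B_1}(\nabla v_s\cdot\nu - \mu v_s)^2\,d\HH^n\right)^{1/2}\,ds.
\]
A second application of Cauchy--Schwarz, this time in $s$ against the measure $ds/s$, produces the factor $\log(r/r')^{1/2}$ together with the weighted integral $\int_{r'}^{r}s^{-1}\int_{\partial B_1}(\nabla v_s\cdot\nu - \mu v_s)^2\,d\HH^n\,ds$.

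The remaining step is to control this last integral by $\widetilde W_\mu(v_r) + C_{\widetilde W}\,r^{k+\gamma-\mu}$. The monotonicity inequality \eqref{monoW} rearranges and integrates from $r'$ to $r$ to yield
\[
\int_{r'}^{r}\frac{2}{s}\int_{\partial B_1}(\nabla v_s\cdot\nu - \mu v_s)^2\,d\HH^n\,ds \le \bigl[\widetilde W_\mu(v_r) + C_{\widetilde W}\,r^{k+\gamma-\mu}\bigr] - \bigl[\widetilde W_\mu(v_{r'}) + C_{\widetilde W}\,(r')^{k+\gamma-\mu}\bigr].
\]
The subtracted term can be dropped once I know the monotone quantity $F(s):=\widetilde W_\mu(v_s) + C_{\widetilde W}s^{k+\gamma-\mu}$ is nonnegative for small $s$; since $F$ is non-decreasing it suffices to check this in the limit $s\to 0^+$. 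The correction $s^{k+\gamma-\mu}$ vanishes because $\mu<k+\gamma$, while along any subsequence $v_{s_j}$ converging to a blow-up $v_0$ (which is $\mu$-homogeneous by \cref{prop:mono}) the Weiss' energy $\widetilde W_\mu(v_{s_j})$ converges to $W_\mu(v_0)=0$, as integration by parts on a $\mu$-homogeneous function gives $\int_{B_1}|\nabla v_0|^2\,dx = \mu\int_{\partial B_1}v_0^2\,d\HH^n$.

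The main (minor) obstacle is the bookkeeping for the lower-order term $\int_{B_1}v_s h\,dx$ hidden in $\widetilde W_\mu(v_s)$ as $s\to 0^+$: one needs the higher-order vanishing of $h$ at the origin (the very fact already used to justify \cref{prop:monotonicity-weiss-tilde}) to see that this contribution disappears in the limit. Chaining the two Cauchy--Schwarz steps with the bound above then yields the desired inequality with a constant $C$ depending only on~$n$.
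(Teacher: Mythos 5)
Your argument is correct and is exactly the paper's proof (which is stated in one line: integrate the monotonicity identity of \cref{prop:monotonicity-weiss-tilde} and apply H\"older), with the details — the fundamental-theorem-of-calculus identity for $\frac{d}{ds}v_s$, the two Cauchy--Schwarz applications, and the nonnegativity of the monotone quantity $\widetilde W_\mu(v_s)+C_{\widetilde W}s^{k+\gamma-\mu}$ needed to discard the term at $r'$ — correctly filled in. The only cosmetic imprecision is that $W_\mu(v_0)=0$ for the blow-up uses not just $\mu$-homogeneity but also $\int_{B_1}v_0\,\Delta v_0=0$ from the complementarity condition, which holds for blow-ups of the thin obstacle problem.
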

\begin{proof} It is sufficient to integrate the identity from \cref{prop:monotonicity-weiss-tilde} and to apply the H\"older's inequality.
\end{proof}
 \section{Epiperimetric inequality for \texorpdfstring{$W_{2m+1}$}{Lg}}\label{section3}
 In this section we prove the epiperimetric inequality in \cref{thm:epi}.
 \subsection{Eigenfunctions and eigenvalues of $\Delta_{\mathbb S^{n}}$}\label{section}
   Let $p\in\mathcal{P}_{2m+1}$ and $T$ the operator as in \eqref{e:definition-of-T}. Then $T[p]:\mathbb{R}^{n}\to\R$ is a non-negative $2m$-homogeneous polynomial. We define 
   $$\mathcal{Z}_\delta:=\{T[p]\ge \delta\}\cap \partial B_1'\quad\text{and}\quad S_\delta:=\partial B_1\setminus \mathcal{Z}_\delta,$$ for every $\delta\ge0$. We also define the set \bea H^1_0(S_\delta):=\{\phi \in H^1(\partial B_1): \phi=0 \text{ on }\mathcal{Z}_\delta\}\subset H^1(\partial B_1),\eea for every $\delta\ge0$.
   If $\Delta_{\mathbb S^{n}}$ is the Laplace Beltrami operator on $\partial B_1$, 
   then there are a non-decreasing sequence $$0<\lambda_1^\delta\le\lambda_2^\delta\le\ldots\le\lambda_j^\delta\le\ldots$$ of eigenvalues (counted with multiplicity) and a sequence of eigenfunctions $\{\phi_j^{\delta}\}\subset H^1_0(S_\delta)$, which is an orthonormal basis in $H^1_0(S_\delta)$, such that \be\label{eq:eigenfunctions}\begin{cases}
	    -\Delta_{\mathbb S^{n}} \phi_j^{\delta}=\lambda_j^\delta \phi_j^{\delta}&\text{in }S_\delta,\\
        \phi_j^{\delta}=0&\text{in }\mathcal{Z}_\delta.
	\end{cases}\ee
We define the normalized eigenspace corresponding to the eigenvalue $\lambda$, as 
$$E_\delta(\lambda):=\{\phi^\delta\in H^1(\partial B_1): -\Delta_{\mathbb S^{n}} \phi^\delta=\lambda \phi^\delta, \ \phi^\delta=0 \text{ on }\mathcal{Z}_\delta ,\  \| \phi\|_{L^2({\partial B_1})}=1\},$$
for every $\delta\ge0$.
Notice that $H^1_0(S_\delta)$ is the natural Sobolev space where we can expand a trace $c\in H^1(\partial B_1)$ with eigenfunctions of $H^1_0(S_\delta)$.

	When $\delta=0$, i.e.~$\mathcal{Z}_0=\partial B_1'$,  we recover the spectrum on the half-sphere $\partial B_1^+$ (extended evenly with respect to $\{x_{n+1}=0\}$). We recall that if $\phi:\partial B_1\to\R $ is such that $\phi\equiv 0$ on $\partial B'_1$, then $r^{\alpha}\phi(\theta)$ is harmonic in $\mathbb{R}^{n+1}$ if and only if $\phi$ is an eigenfunction of the spherical Laplacian corresponding to the eigenvalue $\lambda(\alpha):=\alpha(n+\alpha-1)$. In this case $r^{\alpha}\phi(\theta)$ is a polynomial multiplied by $|x_{n+1}|$ and $\alpha\in \mathbb{N}$. This follows by extending $\phi$ to the whole ball as an odd function with respect to $\{x_{n+1}=0\}$ and using a Liouville-type theorem.
In particular, if $\{\phi_j\}\subset H^1_0(S_0)$ are the eigenfunctions on the half-sphere and $\lambda_j$ are the corresponding eigenvalues, then the following holds.
\begin{itemize}
		\item $\lambda_1=\lambda(1)$ and the corresponding eigenfunction is $\phi_1$ is a multiple of $|x_{n+1}|$.
		
		\item $\lambda_2=\ldots=\lambda_{n+1}=\lambda(2)$ and the corresponding eigenspace $E_0(\lambda(2))$ (of dimension $n$) coincides with the space generated by the restriction to $\partial B_1$ of two homogeneous harmonic polynomials multiples of $|x_{n+1}|$.
		
		\item In general, there exists an explicit function $f:\N\to\N$ such that 
     $$\lambda_{f(j-1)+1}=\ldots=\lambda_{f(j)}=\lambda(j),$$ 
  and the corresponding eigenspace $E_0(\lambda(j))$ is generated by the restriction to $\partial B_1$ of $j$-homogeneous harmonic polynomials multiples of $|x_{n+1}|$.
\end{itemize}
    In particular, we define \be\label{eq:d}\ell:=f(2m+1)\ee 
    as the number of eigenvalues with homogeneity less than or equal to $2m+1$. Thus, the eigenvalues $\lambda_1,\ldots,\lambda_\ell$ correspond to the homogeneities $1,\ldots,2m+1$, while $\lambda_{\ell+1},\ldots,\lambda_j,\ldots$ correspond to homogeneities greater than $2m+1$.

    In the following proposition, we prove that the eigenfunctions and the eigenvalues in $H^1_0(S_\delta)$ converge to the eigenfunctions and the eigenvalues on the half sphere. This is a consequence of the convergence of the resolvent operators.
     
     \begin{proposition}\label{lemma:convergence-of-s_delta} Let $\{\phi^\delta_j\}$ be the eigenfunctions in $H^1_0(S_\delta)$, according with \eqref{eq:eigenfunctions}. Let $\{\lambda^\delta_j\} $ be the eigenvalue corresponding to the eigenfunction $\{\phi_j^\delta\}$. Let $\{\lambda_j\}$ be the eigenvalues corresponding to $H^1_0(S_0)$, according with \eqref{eq:eigenfunctions}. Then, up to subsequences,
$$\phi_j^{\delta}\to\phi_j\quad\text{strongly in }L^2(\partial B_1)\qquad\text{and}\qquad \lambda_j^{\delta}\to\lambda_j \quad\text{for every}\quad j\in\N,$$as $\delta\to0^+$, where the sequence $\{\phi_j\}$ is an orthonormal basis of $H^1_0(S_0)$ of eigenfunctions corresponding to the eigenvalues $\{\lambda_j\}$.
     \end{proposition}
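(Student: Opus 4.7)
The plan is to deduce spectral convergence from the Mosco convergence of the Dirichlet quadratic forms $Q_\delta(\phi) := \int_{\partial B_1} |\nabla_{\mathbb{S}^n} \phi|^2 \, d\HH^n$, with domain $H^1_0(S_\delta)$ extended by $+\infty$ outside, as $\delta \to 0^+$. Since $T[p] = p_0 \ge 0$ on $\partial B_1'$ by the superharmonicity of $p$, one has $\mathcal Z_\delta \subseteq \partial B_1' = \mathcal Z_0$ for every $\delta > 0$, whence the chain of inclusions $H^1_0(S_0) \subseteq H^1_0(S_\delta) \subseteq H^1(\partial B_1)$. The minimax characterization applied to the first inclusion immediately yields $\lambda_j^\delta \le \lambda_j$ for every $j \in \N$, and hence each family $\{\phi_j^\delta\}_{\delta > 0}$ is uniformly bounded in $H^1(\partial B_1)$.

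Along any infinitesimal sequence $\delta_n \to 0^+$, I would then extract by Rellich compactness and a diagonal procedure subsequential limits $\phi_j^{\delta_n} \rightharpoonup \phi_j^{\star}$ weakly in $H^1(\partial B_1)$ and strongly in $L^2(\partial B_1)$, together with $\lambda_j^{\delta_n} \to \lambda_j^{\star} \le \lambda_j$, for every $j$. The limits $\{\phi_j^\star\}$ remain $L^2$-orthonormal by the strong convergence. The crucial step is to show that $\phi_j^{\star} \in H^1_0(S_0)$. Since $\mathcal Z_\delta \supseteq \mathcal Z_{\delta_0}$ whenever $\delta \le \delta_0$, for any fixed $\delta_0 > 0$ and every $n$ large enough the eigenfunction $\phi_j^{\delta_n}$ lies in the closed linear subspace $H^1_0(S_{\delta_0}) \subseteq H^1(\partial B_1)$. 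As closed subspaces are weakly closed, we obtain $\phi_j^{\star} \in H^1_0(S_{\delta_0})$ for every $\delta_0 > 0$, so the trace of $\phi_j^{\star}$ vanishes $\HH^{n-1}$-a.e.\ on the union $\bigcup_{\delta_0 > 0} \mathcal Z_{\delta_0} = \{T[p] > 0\} \cap \partial B_1'$. Under the natural non-degeneracy assumption $T[p] \not\equiv 0$ on $\partial B_1'$ (without which the proposition is vacuous, as $\mathcal Z_\delta$ would be empty), the set $\{T[p] = 0\} \cap \partial B_1'$ is a proper real-algebraic subvariety of the equator of dimension at most $n-2$, hence $\HH^{n-1}$-negligible; therefore the trace of $\phi_j^{\star}$ vanishes $\HH^{n-1}$-a.e.\ on the whole $\partial B_1'$, placing $\phi_j^{\star} \in H^1_0(S_0)$.

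It remains to identify the limits as eigenfunctions corresponding to the correct eigenvalues. Testing the weak formulation $\int_{\partial B_1} \nabla_{\mathbb{S}^n} \phi_j^{\delta_n} \cdot \nabla_{\mathbb{S}^n} \psi \, d\HH^n = \lambda_j^{\delta_n} \int_{\partial B_1} \phi_j^{\delta_n} \psi \, d\HH^n$ against an arbitrary $\psi \in H^1_0(S_0) \subseteq H^1_0(S_{\delta_n})$ and passing to the limit shows that $\phi_j^{\star}$ is an eigenfunction of $-\Delta_{\mathbb{S}^n}$ in $H^1_0(S_0)$ with eigenvalue $\lambda_j^{\star}$. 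Applying the minimax principle with the $j$-dimensional subspace $\mathrm{span}(\phi_1^{\star}, \dots, \phi_j^{\star}) \subseteq H^1_0(S_0)$, on which the Rayleigh quotient is bounded above by $\lambda_j^{\star}$, gives $\lambda_j \le \lambda_j^{\star}$; combined with the upper bound $\lambda_j^{\star} \le \lambda_j$ from the first paragraph, this forces $\lambda_j^{\star} = \lambda_j$, and iterating the argument identifies $\{\phi_j^{\star}\}$ as a complete orthonormal system of eigenfunctions in $H^1_0(S_0)$. I expect the main obstacle to be the identification step $\phi_j^\star \in H^1_0(S_0)$: it hinges on the geometric fact that the zero set of $T[p]$ on the equator has codimension at least one in $\partial B_1'$, which is what ultimately transfers the Dirichlet condition from the growing obstacle sets $\mathcal Z_\delta$ to the whole of $\partial B_1'$.
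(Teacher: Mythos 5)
Your proof is correct, and it reaches the conclusion by a more hands-on route than the paper's. The paper proves $\Gamma$-convergence of the constrained Dirichlet functionals $F_j$ to $F_\infty$, deduces convergence of the resolvent operators $R_j\to R$ in operator norm, and then invokes the standard fact that norm-resolvent convergence implies convergence of eigenvalues and eigenfunctions. You instead argue directly from the min-max characterization: the inclusion $H^1_0(S_0)\subseteq H^1_0(S_\delta)$ gives the one-sided bound $\lambda_j^\delta\le\lambda_j$ and uniform $H^1$ bounds on the eigenfunctions, Rellich compactness produces limits $\phi_j^\star$, and testing the Rayleigh quotient on $\mathrm{span}(\phi_1^\star,\dots,\phi_j^\star)$ gives the reverse bound. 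The core analytic content is the same in both arguments, namely that an $L^2$-limit of functions vanishing on $\mathcal Z_{\delta_j}$ with bounded $H^1$ norm must vanish on all of $\partial B_1'$: this is exactly the paper's liminf inequality and your ``crucial step''. You are in fact more careful here than the paper, since $\bigcup_{\delta_0>0}\mathcal Z_{\delta_0}=\{T[p]>0\}\cap\partial B_1'$ falls short of the whole equator, and one genuinely needs that $\{T[p]=0\}\cap\partial B_1'$ is $\HH^{n-1}$-negligible (true because $T[p]$ is a nonzero polynomial, a non-degeneracy both proofs implicitly assume); the paper leaves this point unstated. What the resolvent formulation buys is a cleaner handling of the ``standard arguments'' at the end, in particular completeness of the limit system and multiplicity counting come packaged with norm-resolvent convergence, whereas in your version the completeness of $\{\phi_j^\star\}$ deserves the extra line you only allude to (if it were incomplete, some eigenvalue would appear in $\{\lambda_j^\star\}$ with multiplicity strictly less than in $\{\lambda_j\}$, contradicting $\lambda_j^\star=\lambda_j$ for all $j$). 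What your version buys is that it is self-contained and avoids quoting $\Gamma$-convergence and resolvent perturbation theory.
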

     \begin{proof}
         Consider a sequence $\delta_j\to 0^+$ and functions $f_j,f\in L^2(\partial B_1)$ such that $f_j$ converges to $f$ weakly in $L^2(\partial B_1)$. We define the functionals $F_j,F_\infty:L^2(\partial B_1)\to \R$ such that
         $$F_j(\psi):=\begin{cases}
             \int_{\partial B_1}|\nabla_{\theta} \psi|^2\,d\HH^n+\int_{\partial B_1}f_j\psi \,d\HH^n &\text{if }\psi\in H^1_0(S_{\delta_j}),\\
             +\infty &\text{otherwise},
         \end{cases}$$ and $$F_\infty(\psi):=\begin{cases}
             \int_{\partial B_1}|\nabla_{\theta}\psi|^2\,d\HH^n+\int_{\partial B_1}f\psi \,d\HH^n &\text{if }\psi\in H^1_0(S_{0}),\\
             +\infty &\text{otherwise},
         \end{cases}$$ and we prove that $F_j$ $\Gamma$-converges to $F$. 
        Indeed, the upper bound inequality follows by the inclusion $H^1_0(S_{0})\subset H^1_0(S_{\delta_j})$. For the lower bound inequality, we observe that
        if $\psi_j$ converges to $\psi$ in $L^2(\partial B_1)$ and $\|\psi_j\|_{H^1(\partial B_1)}\le C$, then $\psi_j$ converges to $\psi$ in $L^2(\partial B_1')$. In particular, if $\psi_j\in H^1_0(S_{\delta_j})$, then $\psi\in H^1_0(S_{0})$.
     
     The $\Gamma$-convergence of $F_j$ to $F$ implies the convergence of the minimizers. In our case this reads as follows. Let $f_j,f\in L^2(\partial B_1)$ be such that $f_j$ converges to $f$ weakly in $L^2(\partial B_1)$. Suppose that there is $\phi_j\in H_0^1(S_{\delta_j})$ such that
    \be\label{eq:eq1} \begin{cases}
         -\Delta_{\mathbb S^{n}}\phi_j=f_j &\text{in }S_{\delta_j},\\
         \phi_j=0&\text{in }\mathcal{Z}_{\delta_j}.
     \end{cases}\ee Then there is $\phi\in H^1(\partial B_1)$ such that $\phi_j$ converges to $\phi$ in $H^1(\partial B_1)$ and
     \be\label{eq:eq2} \begin{cases}
         -\Delta_{\mathbb S^{n}}\phi=f &\text{in }S_{0},\\
         \phi=0&\text{in }\mathcal{Z}_0.
     \end{cases}\ee Therefore, if $R_j,R:L^2(\partial B_1)\to L^2(\partial B_1)$ are the resolvent operators to the problems \eqref{eq:eq1} and \eqref{eq:eq2} respectively, then $$\|R_j(f_j)\to R(f)\|_{L^2(\partial B_1)}\quad\text{for every}\quad f_j\rightharpoonup f\quad\text{weakly in }L^2(\partial B_1),$$ as $j\to+\infty$. Then $$\|R_j(f_j)-R(f_j)\|_{L^2(\partial B_1)}\to0\quad\text{for every}\quad \|f_j\|_{L^2(\partial B_1)}\le 1$$ as $j\to+\infty$, i.e. $$\|R_j-R\|\to 0\quad\text{as}\quad j\to+\infty,$$ where $\|\cdot\|$ is the operator norm. 
     Once the convergence (in the operator norm) of the resolvent operators is proved, the claim follows by standard arguments.
     \end{proof}

 \subsection{Decomposition of $c$} Let $c\in H^1_0(S_\delta)$ be close to $p$ in $L^2(\partial B_1)$, i.e.~suppose that \eqref{eq:vicinanza} and \eqref{eq:quasizero} hold. Since the set of admissible blow-up $\mathcal{P}_{2m+1}$ is a subset of the set of the eigenfunctions of $H^1_0(S_0)$, we can take $\phi_\ell=p$, where $\ell$ is as in \eqref{eq:d}.
 To decompose the trace $c$ we use the following lemma.
\begin{lemma}\label{lemma:implicit} 
There is a sequence $\delta_k\to0^+$ such that the following holds. Suppose that $F:\R^\ell\to\R^\ell$ is such that 
$$F(\nu):=\left(\int_{\partial B_1}p_\nu \phi_1^\delta\,d\HH^n,\ldots,\int_{\partial B_1}p_\nu \phi_\ell^\delta\,d\HH^n\right),\quad\text{for some}\quad \delta\in\{\delta_k\},$$ 
where $\ell$ is as in \eqref{eq:d}, $p_\nu$ is defined as 
$$p_\nu(\theta):=\sum_{j=1}^{\ell}\nu_j\phi_j(\theta),$$ 
and $\phi_j^\delta$ and $\phi_j$ are the eigenfunctions of $-\Delta_{\mathbb S^{n}}$ for $H^1_0(S_\delta)$ and for the half sphere $H^1_0(S_0)$ respectively, according with \eqref{eq:eigenfunctions}. Then, there is a neighborhood of $e_\ell=(0,\ldots,0,1)\in\R^\ell$ such that $F$ is invertible there.
\end{lemma}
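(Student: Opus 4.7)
The plan is to exploit the fact that $F$ is \emph{linear} in $\nu$, and to reduce the invertibility to a small perturbation of the identity matrix via the convergence of eigenfunctions established in \cref{lemma:convergence-of-s_delta}.

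First I would observe that, since $p_\nu=\sum_{j=1}^{\ell}\nu_j\phi_j$ depends linearly on $\nu$, the map $F$ is itself linear: writing
\[
F(\nu)=M^\delta\nu,\qquad M^\delta_{ij}:=\int_{\partial B_1}\phi_j\,\phi_i^\delta\,d\HH^n,
\]
we see that the Jacobian $DF(\nu)=M^\delta$ is constant in $\nu$. Hence, by the inverse function theorem, to prove that $F$ is invertible in a neighborhood of $e_\ell$ (indeed, on all of $\R^\ell$) it suffices to find a sequence $\delta_k\to 0^+$ along which $M^{\delta_k}$ is invertible.

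Next I would invoke \cref{lemma:convergence-of-s_delta}: along a subsequence $\delta_k\to 0^+$, the eigenfunctions $\phi_i^{\delta_k}$ converge strongly in $L^2(\partial B_1)$ to $\phi_i$ for every $i=1,\ldots,\ell$. Since $\{\phi_j\}$ is $L^2$-orthonormal, this gives entrywise convergence
\[
M^{\delta_k}_{ij}\;=\;\int_{\partial B_1}\phi_j\,\phi_i^{\delta_k}\,d\HH^n\;\longrightarrow\;\int_{\partial B_1}\phi_j\,\phi_i\,d\HH^n\;=\;\delta_{ij},
\]
so $M^{\delta_k}\to I$ in the space of $\ell\times\ell$ matrices. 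By continuity of the determinant, $\det M^{\delta_k}\to 1$, and therefore $M^{\delta_k}$ is invertible for all $k$ large enough. Restricting to such $\delta_k$, $F$ becomes a linear isomorphism of $\R^\ell$, and in particular admits a (global, linear) inverse in a neighborhood of $e_\ell$.

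There is no real obstacle here: the substantive input is the \emph{strong} $L^2$-convergence $\phi_i^{\delta_k}\to\phi_i$, which is precisely the output of \cref{lemma:convergence-of-s_delta}; once that is in hand, linearity of $F$ reduces the statement to the elementary fact that matrices close to the identity are invertible. The only care needed is to pass to a common subsequence $\delta_k$ that works simultaneously for all $i=1,\dots,\ell$, which is automatic since $\ell$ is finite.
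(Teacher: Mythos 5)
Your proof is correct and follows essentially the same route as the paper's: both compute the (constant) Jacobian entries $\partial F_i/\partial\nu_j=\int_{\partial B_1}\phi_j\phi_i^\delta\,d\HH^n$ and use the $L^2$-convergence from \cref{lemma:convergence-of-s_delta} to conclude that this matrix is a small perturbation of the identity, hence invertible for $\delta_k$ small. Your additional observation that $F$ is linear, so that the invertibility is in fact global rather than merely local near $e_\ell$, is a harmless sharpening of the same argument.
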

\begin{proof} Let $\{\delta_k\}$ the sequence for which \cref{lemma:convergence-of-s_delta} holds. 
    By the implicit function theorem, it is sufficient to prove that $DF$, the Jacobian matrix of $F$, is invertible. In particular, it is sufficient to show that for $\delta_k>0$ small enough, $DF\approx I$, where $I$ is the identity matrix in $\R^{\ell\times \ell}$. Using that $\partial_{\nu_j} p_\nu=\phi_j$ and applying \cref{lemma:convergence-of-s_delta}, we obtain that
    $$ \frac{\partial F_i}{\partial \nu_j}=\int_{\partial B_1}\phi_j\phi_i^\delta\,d\HH^n= \delta_{i,j}+o(1)$$ 
    as $\delta_k\to0^+$. Finally, the conclusion follows by extracting a subsequence for which $\delta_k$ is small enough.
\end{proof}
By \cref{lemma:implicit}, given $\delta\in\{\delta_k\}$, there is $\eps>0$ such that if $$\|c-\phi_\ell\|_{L^2(\partial B_1)}\le\eps,$$ then we can find constants $c_j\in\R$ such that $$\int_{\partial B_1}c(\theta)\phi_j^\delta\,d\HH^n=\int_{\partial B_1}p_\nu(\theta)\phi_j^\delta\,d\HH^n\quad\text{for every}\quad j=1,\ldots,\ell,$$ where $\nu=(c_1,\ldots,c_\ell)\in\R^\ell$.
Thus if we expand 
$$\phi(\theta):=c(\theta)-\sum_{j=1}^\ell c_j\phi_j(\theta)\in H^1_0(S_\delta)$$ 
using the orthonormal basis in $H^1_0(S_\delta)$, then $\phi$ contains only higher modes. 
In particular, since \eqref{eq:vicinanza} and \eqref{eq:quasizero} hold, we can decompose the trace $c$ as
\be\label{eq:decomposition}c(\theta)=P(\theta)+\phi(\theta),\quad \text{where}\quad P(\theta)=\sum_{j=1}^\ell c_j\phi_j\quad\text{and}\quad \phi(\theta)=\sum_{j=\ell+1}^\infty c_j\phi_j^\delta.\ee 
We choose $\delta\in\{\delta_k\}$ such that 
\be\label{eq:scelta-delta}\lambda_j^\delta\ge \lambda(2m+2)-1>\lambda(2m+3/2)\quad\text{for every}\quad j>\ell,
\ee and we choose the corresponding $\eps>0$ so that we can expand $c$ as above.

\subsection{Killing of lower and higher modes} We use the following two lemmas from \cite{csv20} to kill the lower and the higher modes respectively.	
\begin{lemma} \label{lemma:lower} Let $\{\phi_j\}\subset H^1_0(S_\delta)$ be the normalized eigenfunctions of $-\Delta_{\mathbb S^{n}}$ which are $0$ on $\mathcal{Z}_\delta$, according with \eqref{eq:eigenfunctions}, for some $\delta\ge0$. Let $\psi\in H^1_0(S_\delta)$ such that $$\psi(\theta)=\sum_{j=1}^\infty c_j \phi_j^\delta(\theta)$$ and let $r^{\mu}\psi(\theta)$ be the $\mu$-homogeneous extension of $\psi$ in $\R^{n+1}$. Then 
	\begin{equation*}
		W_\mu(r^\mu \psi)=\frac{1}{n+2\mu-1}\sum_{j=1}^\infty(\lambda_j^\delta-\lambda(\mu))c_j^2.
	\end{equation*}
\end{lemma}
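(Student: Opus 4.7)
The plan is a direct computation in spherical coordinates, followed by the use of the orthonormal expansion of $\psi$. Writing $u(r,\theta)=r^\mu \psi(\theta)$ and splitting the gradient into its radial and spherical parts, I get
$$|\nabla u|^2 = (\partial_r u)^2 + r^{-2}|\nabla_{\mathbb S^n} u|^2 = r^{2\mu-2}\bigl(\mu^2\psi^2 + |\nabla_{\mathbb S^n}\psi|^2\bigr).$$
Integrating in polar coordinates and using $\int_0^1 r^{n+2\mu-2}\,dr = \frac{1}{n+2\mu-1}$ gives
$$\int_{B_1}|\nabla u|^2\,dx = \frac{1}{n+2\mu-1}\int_{\partial B_1}\bigl(\mu^2\psi^2 + |\nabla_{\mathbb S^n}\psi|^2\bigr)\,d\HH^n,$$
while $\int_{\partial B_1} u^2\,d\HH^n = \int_{\partial B_1}\psi^2\,d\HH^n$. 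Combining the $\psi^2$ terms produces the coefficient
$$\frac{\mu^2}{n+2\mu-1} - \mu = -\frac{\mu(n+\mu-1)}{n+2\mu-1} = -\frac{\lambda(\mu)}{n+2\mu-1}.$$

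Next, I would apply Parseval to the expansion $\psi = \sum_j c_j \phi_j^\delta$ in $L^2(\partial B_1)$, which gives $\int_{\partial B_1}\psi^2 = \sum_j c_j^2$. For the Dirichlet integral, integrating by parts on the closed manifold $\partial B_1$ against $-\Delta_{\mathbb S^n}\phi_j^\delta = \lambda_j^\delta \phi_j^\delta$ yields
$$\int_{\partial B_1}|\nabla_{\mathbb S^n}\psi|^2\,d\HH^n = \sum_j \lambda_j^\delta c_j^2.$$
Substituting these into the previous display produces exactly
$$W_\mu(r^\mu\psi) = \frac{1}{n+2\mu-1}\sum_{j=1}^\infty(\lambda_j^\delta - \lambda(\mu))c_j^2.$$

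The only potentially delicate point is the integration by parts: it is valid because $\partial B_1$ is a closed manifold (no boundary), and $\psi, \phi_j^\delta \in H^1(\partial B_1)$. The Dirichlet-type condition on $\mathcal Z_\delta$ enters only through the definition of the space $H^1_0(S_\delta)$ in which the orthonormal basis is chosen and does not produce boundary terms in the computation. There is no real obstacle; this is a routine separation-of-variables identity and the proof is just the bookkeeping above.
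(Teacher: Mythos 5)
Your computation is correct and is the standard argument; note that the paper does not actually prove this lemma but imports it from \cite{csv20}, where the proof is precisely this polar-coordinate calculation for $W_\mu(r^\mu\psi)$ combined with the spectral expansion, so there is nothing to compare beyond bookkeeping. One small point of precision: the identity $\int_{\partial B_1}|\nabla_{\mathbb S^n}\psi|^2\,d\HH^n=\sum_j\lambda_j^\delta c_j^2$ is justified not because $\partial B_1$ is a closed manifold, but because the eigenvalue equation for $\phi_j^\delta$ holds only in $S_\delta$ and must be invoked in its weak form against test functions in $H^1_0(S_\delta)$ --- the distributional Laplacian of $\phi_j^\delta$ on the whole sphere may carry a singular part on the codimension-one set $\mathcal Z_\delta$, and it is the vanishing of $\psi$ there (not the absence of a boundary) that kills it.
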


\begin{lemma}\label{lemma:higher}
    Let $\{\phi_j^\delta\}\subset H^1_0(S_\delta)$ be the normalized eigenfunctions of $-\Delta_{\mathbb S^{n}}$ which are $0$ on $\mathcal{Z}_\delta$, according with \eqref{eq:eigenfunctions}, for some $\delta\ge0$. Let $\psi\in H^1_0(S_\delta)$ such that $$\psi(\theta)=\sum_{j=1}^\infty c_j \phi_j^\delta(\theta)$$ and let $r^{\mu}\psi(\theta)$ be the $\mu$-homogeneous extension of $\psi$ in $\R^{n+1}$. Then 
	\bea W_\mu(r^\alpha\psi)-(1-\kappa_{\alpha,\mu})W_\mu(r^\mu\psi)=\frac{\kappa_{\alpha,\mu}}{n+2\alpha-1}\sum_{j=1}^\infty(\lambda(\alpha)-\lambda_j^\delta)c_j^2,\eea where we set \be\label{eq:kappa}\kappa_{\alpha,\mu}:=\frac{\alpha-\mu}{n+\alpha+\mu-1}.\ee 
\end{lemma}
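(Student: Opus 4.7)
The plan is to reduce both terms to explicit series in the coefficients $c_j$ by using that the $\phi_j^\delta$ are $L^2(\partial B_1)$-orthonormal and satisfy $-\Delta_{\mathbb S^n}\phi_j^\delta=\lambda_j^\delta\phi_j^\delta$, and then simplify by recognising the key algebraic identity
\[
(\alpha-\mu)(n+\alpha+\mu-1)=\lambda(\alpha)-\lambda(\mu),
\]
which is the mechanism that makes the difference collapse to a single clean series.

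First, I would carry out a single computation for any $\beta\ge 0$: if $z_\beta(r,\theta)=r^\beta\psi(\theta)$, then in polar coordinates
\[
|\nabla z_\beta|^2 = r^{2\beta-2}\bigl(\beta^2\psi^2+|\nabla_\theta\psi|^2\bigr),
\]
so integrating over $B_1$ and using the orthonormality of the $\phi_j^\delta$ together with $\int_{\partial B_1}|\nabla_\theta\psi|^2\,d\mathcal H^n=\sum_j\lambda_j^\delta c_j^2$ and $\int_{\partial B_1}\psi^2\,d\mathcal H^n=\sum_j c_j^2$, one gets
\[
\int_{B_1}|\nabla z_\beta|^2\,dx=\frac{1}{n+2\beta-1}\sum_{j}\bigl(\beta^2+\lambda_j^\delta\bigr)c_j^2.
\]
Since $z_\beta\equiv\psi$ on $\partial B_1$, the Weiss term $-\mu\int_{\partial B_1}z_\beta^2=-\mu\sum c_j^2$, and using $\mu(n-1)=\lambda(\mu)-\mu^2$ the numerator becomes a perfect pattern:
\[
W_\mu(z_\beta)=\sum_{j}c_j^2\,\frac{(\beta-\mu)^2+\lambda_j^\delta-\lambda(\mu)}{n+2\beta-1}.
\]
Specialising at $\beta=\mu$ recovers \cref{lemma:lower}; specialising at $\beta=\alpha$ gives the other ingredient.

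Next I would compute the difference with $\kappa=\kappa_{\alpha,\mu}=\frac{\alpha-\mu}{n+\alpha+\mu-1}$. Setting $A=n+2\alpha-1$, $M=n+2\mu-1$, $S=n+\alpha+\mu-1$, note $1-\kappa=M/S$, so
\[
(1-\kappa)W_\mu(z_\mu)=\frac{1}{S}\sum_{j}c_j^2\bigl(\lambda_j^\delta-\lambda(\mu)\bigr),
\]
and subtracting from $W_\mu(z_\alpha)$ one reorganises the resulting coefficient as
\[
\frac{(\alpha-\mu)^2}{A}+\bigl(\lambda_j^\delta-\lambda(\mu)\bigr)\!\left(\frac{1}{A}-\frac{1}{S}\right)=\frac{\alpha-\mu}{AS}\Bigl[S(\alpha-\mu)-\bigl(\lambda_j^\delta-\lambda(\mu)\bigr)\Bigr].
\]

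The only genuinely substantive step is then the algebraic identity $S(\alpha-\mu)=\lambda(\alpha)-\lambda(\mu)$ (which follows directly from $\lambda(t)=t(n+t-1)$); plugging it in, the $\lambda(\mu)$'s cancel and the bracket becomes exactly $\lambda(\alpha)-\lambda_j^\delta$, giving
\[
W_\mu(r^\alpha\psi)-(1-\kappa_{\alpha,\mu})W_\mu(r^\mu\psi)=\frac{\kappa_{\alpha,\mu}}{n+2\alpha-1}\sum_{j=1}^\infty\bigl(\lambda(\alpha)-\lambda_j^\delta\bigr)c_j^2.
\]
There is no real obstacle here beyond keeping track of the algebra; the computation is series-wise absolutely convergent because $\psi\in H^1(\partial B_1)$ forces $\sum\lambda_j^\delta c_j^2<\infty$, which justifies both the spherical expansion of $|\nabla_\theta\psi|^2$ and the termwise integration.
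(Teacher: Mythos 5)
Your computation is correct: the polar-coordinate formula $W_\mu(r^\beta\psi)=\frac{1}{n+2\beta-1}\sum_j\bigl((\beta-\mu)^2+\lambda_j^\delta-\lambda(\mu)\bigr)c_j^2$ checks out, and the identity $(n+\alpha+\mu-1)(\alpha-\mu)=\lambda(\alpha)-\lambda(\mu)$ collapses the difference exactly as you claim. The paper itself states this lemma without proof, importing it from \cite{csv20}, and your argument is precisely the standard direct computation used there, so there is nothing to add.
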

	\subsection{Killing of double product} Since the eigenfunctions $\phi_j$ and $\phi_j^\delta$ are not orthogonal in $H^1(\partial B_1)$ and in $L^2(\partial B_1)$, there is a bilinear form that appears in the decomposition of the Weiss' energy. In order to deal with this double product, in the proof of the epiperimetric inequality we will need the following lemma. 
 
 Given $v,w\in H^1(B_1)$ and $\mu>0$, we will use the following notation 
 \be\label{eq:rmu} R_\mu(v,w):=\int_{B_1}\nabla v\cdot \nabla w \,dx-\mu\int_{\partial B_1} vw\,d\HH^n.
 \ee
	\begin{lemma} \label{lemma:double-product} Let $\phi,\psi\in H^1(\partial B_1)$ be even with respect to $\{x_{n+1}=0\}$, with $$\phi(\theta)=\sum_{j=1}^\infty c_j\phi_j(\theta),$$
  where $\{\phi_j\}\subset H^1_0(S_0)$ are the normalized eigenfunctions of $-\Delta_{\mathbb S^{n}}$ which are $0$ on $\partial B_1'$, according to \eqref{eq:eigenfunctions}.
  Then $$R_\mu(r^\mu\phi(\theta),r^\alpha\psi(\theta))=\frac1{n+\alpha+\mu-1}\beta_\mu(\vf,\psi),$$ where $$ \beta_\mu(\phi,\psi):=\int_{\partial B_1}\sum_{j=1}^\infty(\lambda_j-\lambda(\mu))c_j\phi_j(\theta)\psi(\theta)\,d\HH^n-2\int_{\partial{B_1'}}(\partial_{\theta_{n+1}}\phi)\psi\,d\HH^{n-1}.$$
  	\end{lemma}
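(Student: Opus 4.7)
The plan is to reduce $R_\mu(r^\mu\phi,r^\alpha\psi)$ to a spherical integral by using polar coordinates, and then to integrate by parts on the sphere using the eigenfunction equation for the $\phi_j$, keeping careful track of the boundary contribution on $\partial B_1'$ that arises from the kink of the $\phi_j$ across the equator.

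For the first step, writing $v=r^\mu\phi(\theta)$ and $w=r^\alpha\psi(\theta)$, decomposing the Euclidean gradient into its radial and spherical components gives $\nabla v\cdot \nabla w=\mu\alpha\, r^{\mu+\alpha-2}\phi\psi+r^{\mu+\alpha-2}\nabla_{\mathbb{S}^n}\phi\cdot\nabla_{\mathbb{S}^n}\psi$. Using $dx=r^n\,dr\,d\HH^n(\theta)$, the radial integration of $r^{\mu+\alpha+n-2}$ on $(0,1)$ produces the factor $(n+\alpha+\mu-1)^{-1}$. Subtracting $\mu\int_{\partial B_1}\phi\psi\,d\HH^n$ and simplifying the coefficient of $\phi\psi$ via the identity $\mu\alpha-\mu(n+\mu+\alpha-1)=-\lambda(\mu)$ yields
$$R_\mu(r^\mu\phi,r^\alpha\psi)=\frac{1}{n+\alpha+\mu-1}\int_{\partial B_1}\bigl(\nabla_{\mathbb{S}^n}\phi\cdot\nabla_{\mathbb{S}^n}\psi-\lambda(\mu)\,\phi\psi\bigr)\,d\HH^n.$$

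For the second step, I would integrate by parts on the sphere. Each $\phi_j$ is the restriction to $\partial B_1$ of $|x_{n+1}|$ times a smooth polynomial, so it is only $H^1$ on $\partial B_1$ and has a kink on $\partial B_1'$. Splitting $\partial B_1=\partial B_1^+\cup \partial B_1^-$, integrating by parts on each hemisphere separately, using $-\Delta_{\mathbb{S}^n}\phi=\sum_j\lambda_j c_j\phi_j$ on $\partial B_1\setminus\partial B_1'$ and that the outward normal from $\partial B_1^\pm$ at $\partial B_1'$ is $\mp\hat{\theta}_{n+1}$, one finds
$$\int_{\partial B_1}\nabla_{\mathbb{S}^n}\phi\cdot\nabla_{\mathbb{S}^n}\psi\,d\HH^n=\int_{\partial B_1}\sum_j\lambda_j c_j\phi_j\,\psi\,d\HH^n-2\int_{\partial B_1'}(\partial_{\theta_{n+1}}\phi)\,\psi\,d\HH^{n-1},$$
where $\partial_{\theta_{n+1}}\phi$ denotes the one-sided trace from $\partial B_1^+$. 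Substituting this into the identity from the first step gives exactly $R_\mu(r^\mu\phi,r^\alpha\psi)=(n+\alpha+\mu-1)^{-1}\beta_\mu(\phi,\psi)$.

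The only delicate point is the hemisphere-by-hemisphere integration by parts: a naive application of Green's identity on the full sphere misses the contribution on $\partial B_1'$, and the factor of $2$ in front of $\int_{\partial B_1'}(\partial_{\theta_{n+1}}\phi)\psi\,d\HH^{n-1}$ must be obtained by combining the two hemispheres, which contribute the same quantity thanks to the even symmetry of $\phi$ together with the opposite orientations of the two outward normals on $\partial B_1'$.
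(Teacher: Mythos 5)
Your proof is correct and is essentially the same argument as the paper's: both reduce to an integration by parts that isolates the jump of the normal derivative of $\phi$ across $\{x_{n+1}=0\}$, giving the extra term $-2\int_{\partial B_1'}(\partial_{\theta_{n+1}}\phi)\psi\,d\HH^{n-1}$. The only (cosmetic) difference is the order of operations: the paper integrates by parts in the solid half-ball $B_1^+$ and then passes to polar coordinates, whereas you separate the radial and spherical parts of the Dirichlet energy first, integrate in $r$, and then apply Green's identity hemisphere by hemisphere on $\partial B_1$; your sign and factor-of-$2$ bookkeeping at the equator is right.
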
 
		\begin{proof} By an integration by parts, we get
		\bea  R_\mu(r^\mu\phi,r^\alpha\psi)&=2\left(\int_{B_1^+}\nabla(r^\mu\phi)\cdot\nabla(r^\alpha\psi)\,dx-\mu\int_{(\partial B_1)^+}\phi\psi\,d\HH^n\right)
  \\&=-2\int_{B_1^+}\Delta(r^\mu\phi)r^\alpha\psi\,dx-2\int_{B_1'}\partial_{x_{n+1}}(r^\mu\phi)r^\alpha\psi\,d\HH^n\\&=
  -2\int_{B_1^+}\Delta\left(r^\mu\sum_{j=1}^\infty c_j\phi_j\right)r^\alpha\psi\,dx-2\int_{B_1'}\partial_{x_{n+1}}(r^\mu\phi)r^\alpha\psi \,d\HH^n.\eea
   Using the expression of the Laplacian and the gradient in spherical coordinates, we have that
  \bea R_\mu(r^\mu\phi,r^\alpha\psi)&=-2\int_{B_1^+}\left(\lambda(\mu)\sum_{j=1}^\infty c_j\phi_j+\Delta_{\mathbb S^{n}}\left(\sum_{j=1}^\infty c_j\phi_j\right)\right)r^{\mu-2}r^\alpha\psi\,dx\\
  &\qquad-2
  \int_{B_1'}\left(\mu\theta_{n+1}\phi+\partial_{\theta_{n+1}}\phi\right)\psi r^{\mu-1}r^\alpha\,d\HH^n\\&=-\frac1{n+\alpha+\mu-1}\int_{\partial B_1}\sum_{j=1}^\infty(\lambda(\mu)-\lambda_j)c_j\phi_j\psi\,d\HH^n
  \\&\qquad-\frac2{n+\alpha+\mu-1}\int_{\partial B_1'}(\partial_{\theta_{n+1}}
  \phi)\psi\,d\HH^{n-1},
		\eea where in the last equality
		we used that $\phi_j$ are eigenfunctions corresponding to the eigenvalues $\lambda_j$ and we integrated in $r$. We finally notice that the right-hand side in the last equality is precisely $\beta_\mu(\phi,\psi)$.
\end{proof}
Now we are ready to prove \cref{thm:epi}.
\begin{proof}[Proof of \cref{thm:epi}] Let $c\in H^1(\partial B_1)$ and $z$ its $(2m+1)$-homogeneous extension. Since \eqref{eq:vicinanza} and \eqref{eq:quasizero} hold, we can decompose $c$ as in \eqref{eq:decomposition}. Then $$z(r,\theta)=r^{2m+1}P(\theta)+r^{2m+1}\phi(\theta)$$ and we define the competitor \bea \zeta(r,\theta):=r^{2m+1}P(\theta)+r^{\alpha}\phi(\theta),\eea where $\alpha:=2m+3/2$. Notice that $\zeta\ge0$ on $B_1'$ since $P(\theta)\equiv 0$ on $B_1'$.
So we only need prove the epiperimetric inequality in \eqref{eq:epi}.
We also set $\mu:=2m+1$ and $\kappa_{\alpha,\mu}$ as in \eqref{eq:kappa}. Then, the energy can be decomposed as 
$$W_{\mu}(r^{\mu}P+r^\alpha\phi)=W_{\mu}(r^{\mu}P)+W_{\mu}(r^\alpha\phi)+2R_{\mu}(r^{\mu}P,r^\alpha\phi),$$ 
where $R_{\mu}$ is defined in \eqref{eq:rmu}. 
Therefore \be\label{eq:finale}\begin{aligned}W_\mu(\zeta)-(1-\kappa_{\alpha,\mu})W_\mu(z)&=W_{\mu}(r^{\mu}P+r^\alpha\phi)-(1-\kappa_{\alpha,\mu})W_{\mu}(r^{\mu}P+r^\mu\phi)\\&=\kappa_{\alpha,\mu}W_{\mu}(r^{\mu}P)+W_{\mu}(r^{\alpha}\phi)-(1-\kappa_{\alpha,\mu})W_{\mu}(r^{\mu}\phi)\\ &\qquad+2R_\mu(r^{\mu} P,r^\alpha\phi)-2(1-\kappa_{\alpha,\mu})R_\mu(r^{\mu} P,r^\alpha\phi).
\end{aligned}
\ee
First, by \cref{lemma:lower}, we observe that \be\label{eq:eq10}W_\mu(r^\mu P)\le 0.\ee
Moreover, by \cref{lemma:higher}, we have that  \be\label{eq:eq11}W_{\mu}(r^{\alpha}\phi)-(1-\kappa_{\alpha,\mu})W_{\mu}(r^{\mu}\phi)=\frac{\kappa_{\alpha,\mu}}{n+2\alpha-1}\sum_{j=1}^\infty(\lambda(\alpha)-\lambda_j^\delta)c_j^2\le 0,\ee by \eqref{eq:scelta-delta}.
Finally, notice that by \cref{lemma:double-product} and by definition of $\kappa_{\alpha,\mu}$, we have that
\be\label{eq:eq12}\begin{aligned}  R_\mu(r^{\mu} P,r^\alpha\phi)-(1-&\kappa_{\alpha,\mu})R_\mu(r^{\mu} P,r^\alpha\phi)\\
&=-\left(\frac 1{n+\alpha+\mu-1}-(1-\kappa_{\alpha,\mu}) \frac 1{n+2\mu-1}\right)\beta_\mu(P,\phi)=0,
\end{aligned}\ee
which concludes the proof by using \eqref{eq:finale} with \eqref{eq:eq10}, \eqref{eq:eq11} and \eqref{eq:eq12}.
\end{proof}
\section{Application to the epiperimetric inequality}\label{section4}
In this section we show that we can apply the epiperimetric inequality in \cref{thm:epi} at every trace $ (\widetilde v_\rho)_r|_{\partial B_1},$ with 
\be\label{eq:doublerescalings} (\widetilde v_\rho)_r:= \frac{\widetilde v_\rho(rx)}{r^{2m+1}}
\ee
where $\widetilde v_\rho$ is as in \cref{prop:mono}.
In particular, we prove the following proposition.
\begin{proposition}\label{prop:every-rescaled-fundamental} Let $u$ be a solution to the thin obstacle problem \eqref{def:sol-con-ostacolo}, with obstacle $\vf$ satisfying \eqref{e:hypo-phi}. Suppose that $0\in\Lambda_{2m+1}(u)$, $2m+1\le k$ and $v=u^{(0)}$ given by \eqref{def:v}. Then there is $\rho>0$ small enough such that the epiperimetric inequality in \cref{thm:epi} can be applied to the sequence of the traces $  (\widetilde v_\rho)_r|_{\partial B_1}$, defined in \eqref{eq:doublerescalings}, for every $r\in(0,1).$
    
\end{proposition}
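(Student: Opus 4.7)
The plan is an open–closed continuity argument in the scaling parameter $r\in(0,1]$, bootstrapped by \cref{thm:epi} itself. Using the convergence of blow-ups from \cref{prop:mono} and the uniform $C^{1,1/2}$ estimate of \cref{prop:blow-up-homo2}, I first choose $\rho>0$ small (along a suitable blow-up subsequence) so that $\widetilde v_\rho$ is arbitrarily close in $C^{1,1/2}(\overline{B_1^+})$ to a fixed $p\in\mathcal P_{2m+1}$ with $\|p\|_{L^2(\partial B_1)}=1$. The $(2m+1)$-homogeneity $p(r\cdot)/r^{2m+1}=p$ then gives $C^{1,1/2}$-closeness of $(\widetilde v_\rho)_r$ to $p$ for all $r$ in a neighborhood of $1$, so that both \eqref{eq:vicinanza} and \eqref{eq:quasizero} hold with this fixed $p$ at those scales. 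Set
\[
r_*:=\inf\bigl\{r_0\in(0,1]\ :\ \text{\eqref{eq:vicinanza}--\eqref{eq:quasizero} hold with the fixed }p\text{ for every }s\in[r_0,1]\bigr\}.
\]
I argue by contradiction, assuming $r_*>0$, and show that both conditions persist strictly at $r=r_*$, contradicting the definition of $r_*$.

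On $(r_*,1]$, \cref{thm:epi} is available at every scale. Plugging its competitor into the Weiss' monotonicity identity \cref{prop:weiss-formula} yields a differential inequality that integrates to polynomial decay
\[
\widetilde W_{2m+1}\bigl((\widetilde v_\rho)_r\bigr)+C_{\widetilde W}\,r^{k+\gamma-(2m+1)}\le C\,r^{2\sigma}\qquad\text{on }(r_*,1],
\]
for some $\sigma>0$ depending only on $n$, $m$, $\kappa$, $k$, $\gamma$. A dyadic application of \cref{prop:useful} then gives the summable oscillation estimate $\|(\widetilde v_\rho)_{r}-(\widetilde v_\rho)_{r'}\|_{L^1(\partial B_1)}\le C r^{\sigma}$ for $r_*\le r'\le r\le 1$, with $C$ made arbitrarily small by taking $\rho$ smaller. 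Combined with the uniform $C^{1,1/2}(\overline{B_1^+})$ bound of \cref{prop:blow-up-homo2} and interpolation, this upgrades to $C^{1,\alpha}(\overline{B_1^+})$-closeness of $(\widetilde v_\rho)_r$ to $p$ for some $\alpha\in(0,1/2)$, uniformly in $r\in[r_*,1]$, with constants vanishing as $\rho\to 0^+$. For $\rho$ small enough, \eqref{eq:vicinanza} therefore holds at $r=r_*$ with strict slack.

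The propagation of \eqref{eq:quasizero} then follows from the Signorini complementarity. The structural expansion $p(x',x_{n+1})=-|x_{n+1}|\bigl(T[p](x')+x_{n+1}^2 p_1(x',x_{n+1})\bigr)$ gives the one-sided normal derivative $\partial_{x_{n+1}}p\big|_{x_{n+1}=0^+}=-T[p]\le -\delta$ on an open neighborhood of $\mathcal Z_\delta$ inside $\overline{B_1'}$ (where $T[p]\ge \delta/2$). The $C^{1,\alpha}$-closeness established above transfers this quantitative non-degeneracy to $(\widetilde v_\rho)_r$, yielding $\partial_{x_{n+1}}(\widetilde v_\rho)_r\big|_{x_{n+1}=0^+}\le-\delta/4<0$ on the same neighborhood. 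Combined with the non-negativity $(\widetilde v_\rho)_r\ge 0$ on $B_1'$ and the Signorini complementarity $(\widetilde v_\rho)_r\cdot\partial_{x_{n+1}}(\widetilde v_\rho)_r=0$ on $B_1'$, this forces $(\widetilde v_\rho)_r\equiv 0$ on the neighborhood, and hence by continuity on $\mathcal Z_\delta\subset\{T[p]\ge\delta\}\cap\partial B_1'$. Both conditions therefore hold strictly at $r=r_*$, and by $C^{1,\alpha}$-continuity in $r$ they continue to hold slightly below $r_*$, contradicting the definition of $r_*$.

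The main obstacle is the propagation of \eqref{eq:quasizero}: pointwise vanishing on the prescribed set $\mathcal Z_\delta$ cannot be read off directly from $L^2$-closeness. The key is to combine the $C^{1,\alpha}$-closeness (itself derived from epiperimetric-driven decay and interpolation) with the quantitative threshold $T[p]\ge\delta$ on $\mathcal Z_\delta$: these together yield a strict negative sign for the one-sided normal derivative of $(\widetilde v_\rho)_r$ near $\mathcal Z_\delta$, and Signorini complementarity then upgrades this to exact vanishing. The propagation of \eqref{eq:vicinanza}, by contrast, is the expected byproduct of the epiperimetric-driven decay of the Weiss' energy.
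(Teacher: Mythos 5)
Your proof is correct and its overall architecture coincides with the paper's: the paper also runs a continuity argument in the scale $r$ (via the auxiliary \cref{prop:every-rescaled}, with $r_0$ defined exactly as your $r_*$), derives polynomial decay of the Weiss energy from \cref{thm:epi} plugged into \cref{prop:weiss-formula}, and propagates the $L^2$-closeness \eqref{eq:vicinanza} through the dyadic oscillation estimate of \cref{prop:useful} (\cref{lemma:tec}). The genuine difference is in how the vanishing condition \eqref{eq:quasizero} is propagated. The paper (\cref{lemma:B.3}, following \cite[Lemma B.3]{frs20}) uses a barrier argument: it slides the comparison function $\phi_C(x)=-(n+1)|x_{n+1}|^2+|x'|^2+C$ from above and shows no touching can occur for $C>0$, which forces $(\widetilde v_\rho)_{r'}$ to vanish along the rays where $T[p]$ is bounded below; this requires only the $L^\infty$-closeness supplied by \cref{lemma:linftyl2}, i.e.\ no derivative information. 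You instead interpolate the $L^\infty$-smallness against the uniform $C^{1,1/2}$ bound of \cref{prop:blow-up-homo2} to get $C^{1,\alpha}$-closeness up to the thin space, read off $\partial_{x_{n+1}}p|_{0^+}=-T[p]\le-\delta$ on a neighborhood of $\mathcal Z_\delta$, transfer the strict sign to $(\widetilde v_\rho)_r$, and conclude by Signorini complementarity. Both mechanisms are sound (the complementarity $v\,\partial_{x_{n+1}}v|_{0^+}=0$ on the thin space is unaffected by the right-hand side $h$, and the interpolation inequality $\|Df\|_{L^\infty}\lesssim\|f\|_{L^\infty}^{1/3}\|f\|_{C^{1,1/2}}^{2/3}$ is standard); yours is arguably more transparent in that it quantifies the non-degeneracy directly through $T[p]$, while the paper's barrier needs less regularity and avoids the interpolation step. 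Two small points you should make explicit: the uniform bounds $H(2,(\widetilde v_\rho)_r)\le H_0$ and $\phi(2r,\widetilde v_\rho)\le\phi_0$ needed to invoke \cref{prop:blow-up-homo2} (they follow from the monotonicity statements in \cref{prop:mono}, as verified in the paper's proof of the proposition), and the passage from the scale-by-scale $L^1(\partial B_1)$ bound to a solid $L^2$ bound on an annulus by integrating in polar coordinates, which is where the homogeneity of $p$ enters.
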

To prove \cref{prop:every-rescaled-fundamental}, we use the following fundamental proposition.
\begin{proposition}\label{prop:every-rescaled} For every $H_0>0$ and $\phi_0>0$ there are constants $\eta_1>0$, $\eta_2>0$, $\delta_1>0$ and $\rho_0>0$, depending only on $H_0$, $\phi_0$, $n$, $m$, $\varphi$, $k$ and $\gamma$, such that the following holds.
Let $u$ be a solution to the thin obstacle problem \eqref{def:sol-con-ostacolo}, with obstacle $\vf$ satisfying \eqref{e:hypo-phi}. Suppose that $0\in\Lambda_{2m+1}(u)$, $2m+1\le k$ and $v=u^{(0)}$ given by \eqref{def:v} with $\widetilde v_\rho$ as in \cref{prop:mono}. 
We also suppose that, for some $p\in\mathcal{P}_{2m+1}$, with $\|p\|_{L^2(\partial B_1)}=1$, we have $$\|\widetilde v_{\rho}-p\|_{L^2(\partial B_1)}\le \eta_1, \quad \|\widetilde v_{\rho}-p\|_{L^2(B_2)}\le \eta_2,\quad\text{for some}\quad \rho\in(0, \rho_0),$$ and \bea H(2,(\widetilde v_{\rho})_r)\le H_0,\quad \phi(2r, \widetilde v_\rho)\le \phi_0, \quad \widetilde W_{2m+1}(\widetilde v_{ \rho})+C_{\widetilde W}(\widetilde v_\rho)\le\delta_1\quad\text{for every}\quad r \in(0,1)\eea
where $C_{\widetilde W}(\widetilde v_\rho)>0$ is as in \cref{prop:monotonicity-weiss-tilde}.
    Then the epiperimetric inequality in \cref{thm:epi} can be applied to the sequence of the traces $ (\widetilde v_{\rho})_r|_{\partial B_1}$, defined in \eqref{eq:doublerescalings}, for every $r\in(0,1).$
\end{proposition}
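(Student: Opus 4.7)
The plan is a continuous induction on scales. Set $c_r := (\widetilde v_\rho)_r|_{\partial B_1}$ and let $\mathcal R \subseteq (0,1]$ denote the set of radii $r$ for which \cref{thm:epi} applies to $c_{r'}$ for every $r' \in [r,1]$. By the initial hypotheses $\|\widetilde v_\rho - p\|_{L^2(\partial B_1)} \le \eta_1$ and $\|\widetilde v_\rho - p\|_{L^2(B_2)} \le \eta_2$, together with the sign condition $\widetilde v_\rho \geq 0$ on $B_1'$, the scale $r = 1$ belongs to $\mathcal R$ (the vanishing condition at scale $1$ is verified as below). The goal is to prove $\inf \mathcal R = 0$: if $r_\ast := \inf \mathcal R > 0$, a continuity/openness argument, fed by uniform quantitative estimates, produces a strictly smaller radius in $\mathcal R$, contradicting the definition of $r_\ast$.

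On $\mathcal R$, the iterative application of \eqref{eq:epi} together with the monotonicity formulas in \cref{prop:monotonicity-weiss-tilde} and \cref{prop:weiss-formula} yields the standard differential inequality
\[
\frac{d}{dr}\Bigl(\widetilde W_{2m+1}\bigl((\widetilde v_\rho)_r\bigr) + C_{\widetilde W}\,r^{k+\gamma-2m-1}\Bigr)
\ge \frac{c_\kappa}{r}\,\widetilde W_{2m+1}\bigl((\widetilde v_\rho)_r\bigr),
\]
whose integration gives the polynomial decay $\widetilde W_{2m+1}((\widetilde v_\rho)_r) \le C r^\beta$ for some $\beta, C > 0$; here $\delta_1$ controls the initial datum of the ODE and $\rho_0$ is chosen small enough so that the correction $C_{\widetilde W}(\widetilde v_\rho)$ is negligible, thanks to the last part of \cref{prop:monotonicity-weiss-tilde}. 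Feeding this decay into \cref{prop:useful} gives the quantitative oscillation estimate $\|c_r - c_{r'}\|_{L^1(\partial B_1)} \le C r^{\beta/2}$ for $r' \le r$ in $\mathcal R$, so the $c_r$ converge as $r \to \inf\mathcal R$ to a unique trace $c_\ast$ in $L^1(\partial B_1)$ with the same rate. The uniform $C^{1,1/2}$ bounds from \cref{prop:blow-up-homo2} (valid under $H_0$ and $\phi_0$) upgrade this convergence to uniform convergence in $C^0(\partial B_1)$ and in $L^\infty(B_2)$, and identify $c_\ast$ as the trace of some $p_\ast \in \mathcal{P}_{2m+1}$ whose $L^2(\partial B_1)$-norm is close to $1$ by the assumption $\eta_1 \ll 1$.

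To verify \eqref{eq:vicinanza} at all $r \in \mathcal R$, one combines the initial closeness at $r=1$ with the oscillation bound above to obtain $\|c_r - p_\ast\|_{L^2(\partial B_1)} \le \eta_1 + C r^{\beta/2} \le \eps$, provided $\eta_1$ and $\delta_1$ are chosen small relative to $\eps$ from \cref{thm:epi}. The subtler condition \eqref{eq:quasizero} is verified by exploiting the non-degeneracy of $p_\ast$ at points of $\{T[p_\ast] \ge \delta\}$: writing $p_\ast = -|x_{n+1}|\bigl(p_0(x') + x_{n+1}^2 p_1(x',x_{n+1})\bigr)$ with $p_0 = T[p_\ast] \ge \delta$ on $\mathcal{Z}_\delta$, the function $p_\ast$ is strictly negative in a one-sided neighborhood of each such point (off the thin hyperplane). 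The $L^\infty(B_2)$-convergence $(\widetilde v_\rho)_r \to p_\ast$ then forces $(\widetilde v_\rho)_r < 0$ off the thin set in such neighborhoods; combined with $(\widetilde v_\rho)_r \ge 0$ on $B_1'$ and continuity of $(\widetilde v_\rho)_r$, this yields $c_r \equiv 0$ on $\mathcal{Z}_\delta$.

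The main obstacle is the self-consistency of the induction: the reference blow-up $p$ and the set $\mathcal{Z}_\delta$ naturally depend on the scale, yet the conditions of \cref{thm:epi} must be verified with parameters $\eps, \delta, \kappa$ that are \emph{fixed} throughout. The key observation is that, thanks to the quantitative convergence $c_r \to c_\ast$, one can take the common limit $p_\ast$ as the reference blow-up at every scale of the induction; the parameters must then be chosen in the right order ($\eps, \delta, \kappa$ from \cref{thm:epi} first; then $\eta_1, \eta_2, \delta_1$ small relative to them; and finally $\rho_0$ small enough so that $C_{\widetilde W}(\widetilde v_\rho)$ is negligible in the ODE), and the hypothesis $\|\widetilde v_\rho - p\|_{L^2(B_2)} \le \eta_2$ (in addition to the trace closeness) is essential to propagate the interior $C^0$-control needed for the pointwise vanishing on $\mathcal{Z}_\delta$.
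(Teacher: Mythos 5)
Your overall scheme matches the paper's: a continuous induction on scales (the paper phrases it as $r_0:=\inf\{r:\ \text{the inequality applies on }(r,1]\}$ and derives a contradiction from $r_0>0$), the polynomial decay of $\widetilde W_{2m+1}((\widetilde v_\rho)_r)$ obtained by combining \cref{prop:weiss-formula} with \eqref{eq:epi}, the $L^1(\partial B_1)$ oscillation bound from \cref{prop:useful} plus a dyadic argument, and the propagation of the $L^2$-closeness \eqref{eq:vicinanza} by the triangle inequality with $\eta_1,\delta_1,\rho_0$ chosen small relative to $\eps$. The interpolation of $L^2$ into $L^\infty$ via the uniform $C^{1,\sfrac12}$ bounds is also the paper's \cref{lemma:linftyl2}.

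However, your verification of the vanishing condition \eqref{eq:quasizero} has a genuine gap. You argue that $p$ is strictly negative off the hyperplane near points of $\mathcal Z_\delta$, that $L^\infty$-closeness forces $(\widetilde v_\rho)_r<0$ there, and that continuity together with $(\widetilde v_\rho)_r\ge 0$ on $B_1'$ then gives $c_r\equiv 0$ on $\mathcal Z_\delta$. This does not work: since $p(x',x_{n+1})=-|x_{n+1}|(p_0(x')+x_{n+1}^2p_1)$ vanishes on $\{x_{n+1}=0\}$, the bound $\|(\widetilde v_\rho)_r-p\|_{L^\infty}\le\eta_3$ yields a sign for $(\widetilde v_\rho)_r$ only where $|p|>\eta_3$, i.e.\ outside a slab of thickness of order $\eta_3/\delta$ around the hyperplane. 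Inside that slab there is no sign information, so the function can perfectly well be positive on $\mathcal Z_\delta$ and become negative only at a definite distance from the plane; continuity gives no contradiction. The missing ingredient is the barrier argument of \cref{lemma:B.3} (following \cite[Lemma B.3]{frs20}): one slides the explicit strictly subharmonic comparison function $\phi_C(x)=-(n+1)|x_{n+1}|^2+|x'|^2+C$ above $\widetilde v_\rho(r\cdot+rz)$ and uses $\Delta\phi_C=-2$ against $\Delta v\le h$ with $h$ small to exclude interior touching, concluding $\widetilde v_\rho(rz)=0$ pointwise. A secondary flaw: you identify the limit $c_\ast$ of the traces as $r\to\inf\mathcal R$ with an element of $\mathcal P_{2m+1}$, but if $\inf\mathcal R=r_\ast>0$ this limit is just the trace of $(\widetilde v_\rho)_{r_\ast}$ and need not be homogeneous; the correct (and simpler) move, as in the paper, is to keep the \emph{fixed} $p$ from the hypothesis as the reference at every scale and propagate closeness to it.
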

We need some preliminary lemmas. First, we show that the norms $\|(\widetilde v_\rho)_r-p\|_{L^{\infty}}$ are controlled by $\|(\widetilde v_\rho)_r-p\|_{L^2}$. We notice that we only need a modulus of continuity, which we obtain via a simple argument in the next lemma.  
\begin{lemma}\label{lemma:linftyl2} There is a dimensional constant $\sigma\in(0,1)$ such that the following holds.
    Let $u$ be a solution to the thin obstacle problem \eqref{def:sol-con-ostacolo} with obstacle $\vf$ satisfying \eqref{e:hypo-phi}. Let $v=u^{(0)}$ given by \eqref{def:v} with $(\widetilde v_\rho)_r$ as in \eqref{eq:doublerescalings}. 
    We also suppose that
    $$H(2, (\widetilde v_\rho)_r)\le H_0\quad\text{and}\quad \phi(2r,\widetilde v_\rho)\le \phi_0\quad\text{for some}\quad\rho\in\left(0,\frac12\right),\quad\text{for every}\quad r\in(0,1).$$ 
   If $p\in\mathcal{P}_{2m+1},$ with $\|p\|_{L^2(\partial B_1)}=1$, then 
   $$\|(\widetilde v_\rho)_r-p\|_{L^{\infty}(B_{3/2}\setminus B_{1/4})}\le C\|(\widetilde v_\rho)_r-p\|_{L^2(B_2\setminus B_{1/8})}^\sigma\quad\text{for every}\quad r\in(0,1),$$ 
   for a constant $C>0$ depending only on $H_0$, $\phi_0$, $n$, $m$, $\varphi$, $k$ and $\gamma$.
\end{lemma}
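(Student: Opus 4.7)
The plan is to combine a uniform Lipschitz estimate on the difference $f_r := (\widetilde v_\rho)_r - p$ with the textbook $L^\infty$--$L^2$ interpolation for Lipschitz functions. Since the statement only claims some dimensional H\"older-type exponent $\sigma \in (0,1)$, no sharp constants are needed; the exponent will come out to be $\sigma = 2/(n+3)$.

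The first step is to produce a uniform Lipschitz bound on $f_r$ on a fixed neighborhood of $\overline{B_{3/2} \setminus B_{1/4}}$. For $(\widetilde v_\rho)_r$, this is precisely the content of \cref{prop:blow-up-homo2} applied to $\widetilde v_\rho$ (the final sentence of that statement allows us to replace $v$ by $\widetilde v_\rho$): the hypotheses $H(2, (\widetilde v_\rho)_r) \le H_0$ and $\phi(2r, \widetilde v_\rho) \le \phi_0$ are exactly those in force, and the conclusion yields a uniform $C^{1,1/2}(B_{3/2}^+)$ bound, which by the even symmetry across $\{x_{n+1} = 0\}$ upgrades to a uniform Lipschitz bound on $B_{3/2}$. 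On the other side, any $p \in \mathcal{P}_{2m+1}$ admits the explicit form $p = -|x_{n+1}|(p_0 + x_{n+1}^2 p_1)$ with homogeneous polynomials $p_0, p_1$; since $\mathcal{P}_{2m+1}$ is finite-dimensional and $\|p\|_{L^2(\partial B_1)} = 1$, the quantity $\|p\|_{C^{0,1}(B_2)}$ is bounded by a constant depending only on $n$ and $m$. Thus $f_r$ is uniformly Lipschitz on $B_{3/2}$ with constant $L$ depending only on $H_0$, $\phi_0$, $n$, $m$, $\varphi$, $k$, $\gamma$.

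Once this Lipschitz bound is available, the interpolation is carried out by the classical ball argument. Let $M := \|f_r\|_{L^\infty(B_{3/2} \setminus B_{1/4})}$ and pick $x_0 \in \overline{B_{3/2} \setminus B_{1/4}}$ at which this supremum is attained; set $r_0 := \min\{M/(2L),\, 1/16\}$. By the Lipschitz estimate $|f_r(y)| \ge M/2$ for every $y \in B_{r_0}(x_0) \cap B_{3/2}$, and a direct check shows $B_{r_0}(x_0) \subset B_2 \setminus B_{1/8}$ while $B_{r_0}(x_0) \cap B_{3/2}$ has measure at least $c_n r_0^{n+1}$ (either a full ball, or at worst a half-ball when $x_0$ sits close to $\partial B_{3/2}$). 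Hence
\[
\|f_r\|_{L^2(B_2 \setminus B_{1/8})}^2 \;\ge\; \frac{M^2}{4}\, c_n\, r_0^{n+1}.
\]
Considering separately the two cases according to which of $M/(2L)$ and $1/16$ is smaller, this rearranges in either case to an inequality of the form $M \le C \|f_r\|_{L^2(B_2 \setminus B_{1/8})}^\sigma$ with $\sigma = 2/(n+3)$ (in the second case one gets the stronger linear bound $M \le C \|f_r\|_{L^2}$, which can be weakened to the H\"older form since $\|f_r\|_{L^2} \le \|f_r\|_{L^\infty(B_2)} |B_2|^{1/2}$ is a priori bounded by the Lipschitz estimate).

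The main obstacle is really just producing the uniform Lipschitz bound on $f_r$; once this is in hand the rest is an elementary geometric estimate. This matches the authors' description of a "simple argument" and explains why they aim only at a modulus of continuity of $f_r$ rather than a sharp pointwise control.
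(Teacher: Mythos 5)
Your proof is correct and follows essentially the same route as the paper: a uniform Lipschitz bound on $(\widetilde v_\rho)_r-p$ from \cref{prop:blow-up-homo2} (plus compactness of the normalized $p$'s), combined with the standard ``bump of height $M$ and width $M/L$'' lower bound on the $L^2$ norm, which the paper imports from \cite[Lemma 3.2]{sv21} and you reprove directly. The only discrepancy is cosmetic: your exponent $\sigma=2/(n+3)$ is in fact the one that the cited estimate yields (the paper writes $1/(n+3)$), and either value is admissible for the statement.
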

\begin{proof}
    Notice that, in general, if $G:\R^{n+1}\to\R$ is a non-negative $L$-Lipschitz continuous function, $x_0\in \R^{n+1}$ and $M:=G(x_0)$, then 
    $$\int_{B_R(x_0)}G^2(x)\, dx\ge C\frac{{M}^{n+3}}{L^{n+1}},$$ 
    where $R=M/L$ (see e.g. \cite[Lemma 3.2]{sv21}). Thus, if for instance
    $$M:=\|(\widetilde v_\rho)_r-p\|_{L^\infty(B_{3/2}\setminus B_{1/4})}=(\widetilde v_\rho)_r(x_0)-p(x_0)\quad\text{for some}\quad x_0\in \overline B_{3/2}\setminus B_{1/4},$$ 
    we can choose $G:=((\widetilde v_\rho)_r-p)_+.$ If $L$ is the Lipschitz constant of $((\widetilde v_\rho)_r-p)_+$ in $B_{3/2}$, by \cref{prop:blow-up-homo2}, $M$ and $L$ are bounded by a constant that depends only on $H_0$, $\phi_0$, $n$, $m$, $\vf$, $k$ and $\gamma$. Then, up to enlarge $L>0$, we can take $R=M/L>0$ small enough. Finally, the claim follows from the previous estimate, with $\sigma=\frac1{n+3}$.
\end{proof}
In the next lemma we show that if the $L^\infty$ distance between $\widetilde v_\rho$ and $p\in \mathcal P_{2m+1}$ is small, then the positivity set of $\widetilde v_\rho$ is contained in a neighborhood of $\{T[p]=0\}$.
\begin{lemma}\label{lemma:B.3} There are constants $\eta_3>0$ and $\overline \rho>0$, depending only on $H_0$, $\phi_0$, $n$, $m$, $\varphi$, $k$ and $\gamma$, such that the following holds. 
Let $u$ be a solution to the thin obstacle problem \eqref{def:sol-con-ostacolo}, with obstacle $\vf$ satisfying \eqref{e:hypo-phi}. Suppose that $0\in\Lambda_{2m+1}(u)$, $2m+1\le k$ and $v=u^{(0)}$ given by \eqref{def:v} with $(\widetilde v_\rho)_r$ as in \eqref{eq:doublerescalings}. We also suppose that $$\|(\widetilde v_\rho)_r-p\|_{L^\infty(B_{3/2}\setminus B_{1/4})}\le \eta_3\quad\text{for some}\quad \rho\in(0,\overline \rho), \ r\in(0,1).$$ Then 
$$ (\widetilde v_\rho)_{r'}\equiv 0\quad\text{in}\quad \mathcal{Z}_\delta:=\{T[p]\ge\delta\}\cap\partial B_1'\quad\text{for every}\quad r'\in\left( \frac13r, r\right),$$
where $\delta>0$ is as in \cref{thm:epi} and $T$ is the operator from \eqref{e:definition-of-T}.
\end{lemma}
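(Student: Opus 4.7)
The plan is to argue by contradiction on a single scale $r'\in(r/3,r)$. Setting $w:=(\widetilde v_\rho)_r$ and $\sigma:=r'/r\in(1/3,1)$, the identity
\[
(\widetilde v_\rho)_{r'}(x)=\sigma^{-(2m+1)}\,w(\sigma x)
\]
shows that the desired conclusion is equivalent to $w(y_0)=0$ for every $y_0=\sigma x'_0$ with $x'_0\in\mathcal Z_\delta$. Any such $y_0$ lies in $B_1'\setminus B_{1/3}'$, and the $2m$-homogeneity of $p_0=T[p]$ yields
\[
p_0(y_0)=\sigma^{2m}p_0(x'_0)\ge (1/3)^{2m}\delta=:c_0\delta.
\]
Since $w\ge 0$ on $B_1'$ by \eqref{def:v}, it suffices to rule out $w(y_0)>0$.

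Assume, for contradiction, that $w(y_0)>0$. By continuity, $w>0$ on an open plane neighborhood $V\ni y_0$, so the contact set is disjoint from $V$, and $\Delta w=h_w$ holds classically in a full $(n+1)$-dimensional cylinder over $V$; here $h_w$ is the right-hand side inherited from $h$ in \eqref{def:v} after the two rescalings $x\mapsto\rho x$ and $x\mapsto rx$, which is uniformly bounded and in fact small provided $\overline\rho$ is small (since $2m+1\le k$ produces a factor $\rho^{k+\gamma-(2m+1)}$). Interior elliptic regularity then gives $w\in C^{2,\alpha}$ in a ball centered at $(y_0,0)$, and the even symmetry $w(y',t)=w(y',-t)$ forces
\[
\partial_{n+1}w(y_0,0)=0.
\]
On the other hand, \cref{prop:blow-up-homo2} applied to $\widetilde v_\rho$ in place of $v$ gives a uniform bound $\|w\|_{C^{1,1/2}(B_{3/2}^+)}\le C_1$; integrating the ensuing H\"older estimate $|\partial_{n+1}w(y_0,\tau)|\le C_1\tau^{1/2}$ between $0$ and $t$ yields
\[
|w(y_0,t)-w(y_0,0)|\le \tfrac{2C_1}{3}\,t^{3/2}\qquad\text{for small }t>0.
\]

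Finally, from the structure $p(y',x_{n+1})=-|x_{n+1}|(p_0(y')+x_{n+1}^2 p_1(y',x_{n+1}))$ and the finite-dimensionality of $\mathcal P_{2m+1}$ (with $\|p\|_{L^\infty(B_2)}\le \|w\|_{L^\infty}+\eta_3$ uniformly bounded by \cref{prop:blow-up-homo2}), we get a uniform bound $\|p_1\|_{L^\infty(B_2)}\le C_2$, and therefore for $t>0$ small
\[
p(y_0,t)-p(y_0,0)=-t\,p_0(y_0)+R(t),\qquad |R(t)|\le C_2\,t^3.
\]
Setting $D:=[w(y_0,t)-w(y_0,0)]-[p(y_0,t)-p(y_0,0)]$, the $L^\infty$ closeness hypothesis applied at $(y_0,0)$ and $(y_0,t)\in B_{3/2}\setminus B_{1/4}$ gives $|D|\le 2\eta_3$, while the two estimates above yield
\[
|D|\ge t\,p_0(y_0)-\tfrac{2C_1}{3}\,t^{3/2}-C_2\,t^3\ge t\,c_0\delta-\tfrac{2C_1}{3}\,t^{3/2}-C_2\,t^3.
\]
Choosing $t_*>0$ depending only on $c_0\delta,C_1,C_2$ so that $\tfrac{2C_1}{3}t_*^{1/2}+C_2 t_*^2\le c_0\delta/2$, one deduces $\tfrac12 t\,c_0\delta\le 2\eta_3$ for every $t\in(0,t_*)$, which is violated at $t=t_*$ as soon as $\eta_3<t_*c_0\delta/4$. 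This contradicts $w(y_0)>0$ and completes the proof.

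The delicate point is to ensure that the $C^{1,1/2}$ constant $C_1$ for $w$ and the pointwise bound $C_2$ on $p_1$ are uniform in $\rho$ and $r$, depending only on $H_0,\phi_0,n,m,\varphi,k,\gamma$; the former comes from \cref{prop:blow-up-homo2} applied directly to the rescalings $(\widetilde v_\rho)_r$, while the latter follows from the finite-dimensionality of $\mathcal P_{2m+1}$ combined with the uniform $L^\infty$ bound on $w$. The smallness of $\overline\rho$ enters only to make $h_w$ small enough that the elliptic regularity across the plane (used to derive $\partial_{n+1}w(y_0,0)=0$) produces constants independent of $\rho$.
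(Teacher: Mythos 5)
Your argument is correct, but it runs along a genuinely different track from the paper's. The paper proves the claim by a comparison/barrier argument: around a point $z$ with $T[p](z')\ge \delta/3^{2m}$ it slides the superharmonic paraboloid $\phi_C(x)=-(n+1)|x_{n+1}|^2+|x'|^2+C$ down onto $\widetilde v_\rho(r\cdot+rz)$, rules out interior touching (off the contact set because $\Delta\phi_C=-2$ while the right-hand side is small for $\overline\rho$ small; on the contact set because $\phi_C>0$ there), and concludes $\widetilde v_\rho(rz)\le\phi_0(0)=0$. You instead argue by contradiction through the complementarity condition: if $w=(\widetilde v_\rho)_r$ were positive at $y_0$, the equation holds across the thin space near $(y_0,0)$, evenness forces $\partial_{n+1}w(y_0,0)=0$, and the uniform $C^{1,1/2}$ bound from \cref{prop:blow-up-homo2} caps the vertical decay of $w$ at $O(t^{3/2})$, which is incompatible (for $\eta_3$ small) with the linear drop $-t\,p_0(y_0)\le -3^{-2m}\delta\,t$ of $p$. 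Both proofs rest on the same heuristic — a definite slope of $p$ in $|x_{n+1}|$ cannot coexist with positivity of the solution on the thin space — but yours trades the maximum principle for the optimal regularity estimate and makes the dependence $\eta_3\sim t_*\delta$ explicit, while the paper's barrier needs only Lipschitz continuity of $w$. Two points you should make explicit: (i) the invocation of \cref{prop:blow-up-homo2} (and hence the constants $C_1$, and the validity of the $C^1$ matching across $\{x_{n+1}=0\}$) presupposes the bounds $H(2,(\widetilde v_\rho)_r)\le H_0$ and $\phi(2r,\widetilde v_\rho)\le\phi_0$; these are not in the statement but are implicit in the dependence of $\eta_3,\overline\rho$ on $H_0,\phi_0$ and are supplied in \cref{prop:every-rescaled}, exactly as in the paper's own proof; (ii) the uniform bound on $p_1$ should be justified simply by the finite dimensionality of $\mathcal P_{2m+1}$ together with the normalization $\|p\|_{L^2(\partial B_1)}=1$ (equivalence of norms on the unit sphere of a finite-dimensional space), rather than via the $L^\infty$ bound on $w$.
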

\begin{proof}
The proof is similar to \cite[Lemma B.3]{frs20}. 
    Let $z=(z',0)\in  B_1'\setminus B_{1/3}'$ be such that 
    \bea T[p](z')\ge \frac{\delta}{3^{2m}}.\eea
    Consider the function 
    $$\phi_C(x):=-(n+1)|x_{n+1}|^2+|x'|^2+C,$$ 
    for every $C>0$. Then, we have that
    $$\widetilde v_\rho(r x+r z)\le \phi_C(x)\quad\text{for every}\quad x\in\partial B_{r_1},$$ 
    for some $r_1>0$ and $\eta_3>0$ small enough, by the hypothesis assumption. Next, suppose that there is $C_\ast>0$ such that the function $\phi_{C_\ast}$ touches $\widetilde v_\rho(r \cdot+r z)$ from above. 
Notice that the contact point $x_0$ cannot lie in $B_{r_1}\setminus\{x': \ \widetilde v_\rho(r x'+r z',0)=0\}$, since the right hand side of $\widetilde v_\rho(r \cdot+rz)$ is small (for $\overline \rho$ small enough), while  $ \Delta\phi_{C_\ast}=-2$. On the other hand, if $\phi_{C_\ast}$ touches $\widetilde v_\rho(r x'+r z',0)$ in $x_0\in \{x':\ \widetilde  v_\rho(r x'+r z',0)=0\}$, then $\phi_{C_\ast}>0$, which is a contradiction. Thus, $\phi_{C_\ast}$ 
cannot touch $\widetilde v_\rho(r \cdot+r z)$ from above when $C_\ast>0$ and so, we get
        $$\widetilde v_\rho(r x+r z)\le \phi_0(x)\quad\text{for every}\quad x\in B_{r_1}.$$ 
   Since $\phi_0(0)=0$, this implies that $\widetilde v_\rho(r z)=0$.

    Now, given $x\in \mathcal{Z}_\delta$ and $r'\in(\frac13r,r)$, we take $z=(z',0)=\frac {r'}{r} x\in B_1'\setminus B_{1/3}'$, then $$T[p](z')=\left(\frac {r'}{r}\right)^{2m}T[p](x')\ge \frac\delta{3^{2m}}.$$ 
    Therefore $\widetilde v_\rho(r'x)=\widetilde v_\rho(r z)=0$, which concludes the proof.
    \end{proof}
In the next lemma we show that if $\widetilde v_\rho$ is close to $p$ at some scale, then it stay close to $p$ at some smaller scale.
\begin{lemma}\label{lemma:tec} For every $\beta>0$ there are constants $\delta_1>0$ and $\delta_2>0$, depending only on  $H_0$, $\phi_0$, $n$, $m$, $\varphi$, $k$ and $\gamma$, such that the following holds.
Let $u$ be a solution to the thin obstacle problem \eqref{def:sol-con-ostacolo}, with obstacle $\vf$ satisfying \eqref{e:hypo-phi}. Suppose that $0\in\Lambda_{2m+1}(u)$ with $=2m+1\le k$ and $v=u^{(0)}$ given by \eqref{def:v} with $(\widetilde v_\rho)_r$ as in \eqref{eq:doublerescalings}. We also suppose that $$\widetilde W_{2m+1}(\widetilde v_{\rho})+C_{\widetilde W}(\widetilde v_\rho)\le \delta_1,\quad\|(\widetilde v_{\rho})_r-p\|_{L^2(\partial B_1)}\le\delta_2\quad\text{for some}\quad\rho\in\left(0,\frac12\right), \ r\in(0,1),$$ 
where $C_{\widetilde W}(\widetilde v_\rho)>0$ is as in \cref{prop:monotonicity-weiss-tilde}. Then
$$\|(\widetilde v_{\rho})_{r'}-p\|_{L^2(\partial B_1)}\le \beta\quad\text{for every}\quad r'\in\left(\frac18r,r\right).$$ 
\end{lemma}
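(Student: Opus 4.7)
The plan is to set up a triangle inequality
\[
\|(\widetilde v_\rho)_{r'}-p\|_{L^2(\partial B_1)}\le \|(\widetilde v_\rho)_{r'}-(\widetilde v_\rho)_r\|_{L^2(\partial B_1)}+\|(\widetilde v_\rho)_r-p\|_{L^2(\partial B_1)},
\]
and to control the two terms separately. The second term is already bounded by $\delta_2$ by hypothesis, so it suffices to demand $\delta_2\le \beta/2$. The bulk of the argument will consist in forcing the first term below $\beta/2$ by choosing $\delta_1$ small.

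For the first term I would appeal to \cref{prop:useful} applied to $\widetilde v_\rho$ in place of $v$. This requires the minor observation that $\widetilde v_\rho$ still solves a thin obstacle problem of the form \eqref{def:v}, with a rescaled right-hand side and with the monotonicity constant $C_{\widetilde W}(\widetilde v_\rho)$ provided by \cref{prop:monotonicity-weiss-tilde}; once this is noted, the proof of \cref{prop:useful} (integration in $r$ together with H\"older) carries over verbatim, yielding
\[
\|(\widetilde v_\rho)_r-(\widetilde v_\rho)_{r'}\|_{L^1(\partial B_1)}\le C\,\log(r/r')^{1/2}\bigl(\widetilde W_{2m+1}((\widetilde v_\rho)_r)+C_{\widetilde W}(\widetilde v_\rho)\,r^{k+\gamma-(2m+1)}\bigr)^{1/2}.
\]
By the (almost-)monotonicity from \cref{prop:monotonicity-weiss-tilde} and $r\le 1$, the quantity under the square root is at most $\widetilde W_{2m+1}(\widetilde v_\rho)+C_{\widetilde W}(\widetilde v_\rho)\le \delta_1$; since $r'\in(r/8,r)$ gives $\log(r/r')\le \log 8$, the oscillation in $L^1$ is at most $C'\,\delta_1^{1/2}$, with $C'$ depending only on the listed parameters.

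To upgrade this $L^1$ bound to $L^2$, the next step is interpolation against a uniform $L^\infty$ estimate. The hypotheses $H(2,(\widetilde v_\rho)_r)\le H_0$ and $\phi(2r,\widetilde v_\rho)\le \phi_0$ for all $r\in(0,1)$ are precisely what is needed to invoke \cref{prop:blow-up-homo2}, which produces a uniform $C^{1,1/2}(B_{3/2}^+)$ bound on the family $\{(\widetilde v_\rho)_r\}_{r\in(0,1]}$, hence in particular
\[
\|(\widetilde v_\rho)_r-(\widetilde v_\rho)_{r'}\|_{L^\infty(\partial B_1)}\le C_0.
\]
Combining with the $L^1$ estimate through the elementary inequality $\|f\|_{L^2}^2\le \|f\|_{L^\infty}\|f\|_{L^1}$ gives
\[
\|(\widetilde v_\rho)_r-(\widetilde v_\rho)_{r'}\|_{L^2(\partial B_1)}\le (C_0 C')^{1/2}\,\delta_1^{1/4}.
\]
Choosing $\delta_1$ so that the right-hand side is at most $\beta/2$, together with $\delta_2\le \beta/2$, yields the claim.

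The only non-routine point is the already-mentioned bookkeeping needed to legitimize the use of \cref{prop:useful} for the normalized rescaling $\widetilde v_\rho$; the rest is the standard $L^1$–$L^\infty$ interpolation trick used to convert the Weiss-energy oscillation control into the $L^2$ control that the epiperimetric inequality ultimately demands. I do not expect any genuinely delicate obstacle here beyond carefully tracking the dependence of all constants on $H_0,\phi_0,n,m,\varphi,k,\gamma$.
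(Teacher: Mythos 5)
Your proof is correct and follows the same skeleton as the paper's: triangle inequality, then control of the oscillation $\|(\widetilde v_\rho)_r-(\widetilde v_\rho)_{r'}\|$ via \cref{prop:useful} and the almost-monotonicity of $\widetilde W_{2m+1}$ from \cref{prop:monotonicity-weiss-tilde}, with $\log(r/r')\le\log 8$. The one place you diverge is the $L^1$-to-$L^2$ upgrade. The paper simply invokes \cref{prop:useful} directly in the $L^2(\partial B_1)$ norm: although that proposition is stated for $\int_{\partial B_1}|v_r-v_{r'}|\,d\HH^n$, its proof (writing $v_r-v_{r'}$ as the radial integral of $\frac1s(\nabla v_s\cdot\nu-\mu v_s)$, applying Cauchy--Schwarz in $s$, and integrating the inequality of \cref{prop:monotonicity-weiss-tilde}) yields the $L^2$ bound with the same right-hand side, so no interpolation is needed and one gets $C\log(8)^{1/2}\delta_1^{1/2}$ directly. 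Your detour through $\|f\|_{L^2}^2\le\|f\|_{L^\infty}\|f\|_{L^1}$ is legitimate, but it forces you to import the uniform $C^{1,1/2}$ bound of \cref{prop:blow-up-homo2}, i.e.\ the hypotheses $H(2,(\widetilde v_\rho)_r)\le H_0$ and $\phi(2r,\widetilde v_\rho)\le\phi_0$, which are not listed among the assumptions of the lemma itself (they are only present in \cref{prop:every-rescaled}, where the lemma is applied, and the constants are allowed to depend on $H_0,\phi_0$, so this is defensible but should be flagged). The direct $L^2$ route avoids this dependence entirely and gives the cleaner exponent $\delta_1^{1/2}$ in place of your $\delta_1^{1/4}$; neither difference affects the conclusion.
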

\begin{proof} First notice that by \cref{prop:useful}, we have that
    \begin{align*}\|(\widetilde v_{\rho})_r-(\widetilde v_{\rho})_{r'}\|_{L^2(\partial B_1)}&\le C\log\left(\frac{r}{r'}\right)^{1/2}\left(\widetilde W_{2m+1}((\widetilde v_{\rho})_r)+C_{\widetilde W}(\widetilde v_\rho)r^{k+\gamma-2m-1}\right)^{1/2}
    \\&\le C\log\left(8\right)^{1/2}\left(\widetilde W_{2m+1}(\widetilde v_{\rho})+C_{\widetilde W}(\widetilde v_\rho)\right)^{1/2}
    \\&\le C\log\left(8\right)^{1/2}\delta_1^{1/2}
    \quad\text{for every}\quad r'\in \left(\frac18r,r\right), \end{align*} where in the second last inequality we used \cref{prop:monotonicity-weiss-tilde}.
Therefore \begin{align*}\|(\widetilde v_{\rho})_{r'}-p\|_{L^2(\partial B_1)}&\le \|(\widetilde v_{\rho})_r-(\widetilde v_{\rho})_{r'}\|_{L^2(\partial B_1)}+\|(\widetilde v_{\rho})_r-p\|_{L^2(\partial B_1)}\\ &\le C \log\left(8\right)^{1/2}\delta_1^{1/2}+\delta_2\le \beta \end{align*} if we choose $\delta_1>0$, $\delta_2>0$ and $\overline \rho>0$ small enough.
\end{proof}

Now we are ready to prove \cref{prop:every-rescaled}.
\begin{proof}[Proof of \cref{prop:every-rescaled}]
Let $\rho\in(0, \rho_0)$ as in the hypothesis, with $\rho_0>0$ to be chosen and such that we can apply \cref{lemma:B.3}.
First notice that by \cref{lemma:linftyl2} and \cref{lemma:B.3}, we can find $\eta_2>0$ such that if $$\|(\widetilde v_{\rho})_r-p\|_{L^2(B_2\setminus B_{1/8})}\le \eta_2\quad\text{for some}\quad r\in(0,1),$$ then $$ (\widetilde v_{\rho})_{r'}\equiv 0\quad\text{in } \mathcal{Z}_\delta\quad\text{for every}\quad r'\in\left(\frac 13 r, r\right).$$
  Let $\beta\in(0,\eps)$ to be chosen and take the corresponding $\delta_1,$ $\delta_2$ as in \cref{lemma:tec}. We set $\eta_1\in(0,\delta_2)$ to be chosen.
  By \cref{lemma:tec}, we know that if we have the bounds
  \be\label{eq:applic}\|(\widetilde v_{\rho})_{r}-p\|_{L^2( \partial B_1)}\le \delta_2\quad\text{and}\quad\|(\widetilde v_{\rho})_{r}-p\|_{L^2( B_2\setminus B_{1/8})}\le \eta_2\quad\text{for some}\quad r\in(0, 1),\ee 
  then 
  we can apply \cref{thm:epi} to all the traces $ (\widetilde v_{\rho})_{r'}|_{\partial B_1}$ with $r'\in(\frac13r,r)$.
  
  We define $r_0\in[0,1]$ as the smallest number such that we can apply the epiperimetric inequality in \cref{thm:epi} to the traces $(\widetilde v_{\rho})_r|_{\partial B_1}$ for $r\in (r_0,1]$. Since \eqref{eq:applic} is satisfied for $r=1$, we can apply the epiperimetric inequality for $r\in (\frac13,1]$, so we have that $r_0\le \frac13$. 
  We will show that $r_0=0$. 
  
  Suppose by contradiction that $r_0>0$. Using the Weiss' formula in \cref{prop:weiss-formula} together with the epiperimetric inequality in \cref{thm:epi} and integrating in $r$ (see e.g. \cite{car24}), we obtain
  $$\widetilde W_{2m+1}((\widetilde v_{\rho})_r)\le C(\rho)r^{\alpha},\quad\text{for every}\quad r\in\left(r_0,1\right),$$ 
  for some constants $C(\rho)>0$ and $\alpha>0$, with $C(\rho)\to0^+$ as $\rho\to0^+$ (we used  $\widetilde W_{2m+1}(\widetilde v_\rho)\to0$ and $C_{\widetilde W}(\widetilde v_\rho)\to0^+$ as $\rho\to0^+$). By  \cref{prop:useful} and a dyadic argument, we obtain that 
  $$\int_{\partial B_1} | \widetilde v_{\rho}-(\widetilde v_{\rho})_r| \,d\HH^n\le C(\rho)\quad\text{for every}\quad r\in\left(r_0,1\right),$$ 
  where $C(\rho)\to 0^+$ as $\rho\to0^+$.
 Therefore, for every $ r\in(r_0,1)$ 
 \begin{align*}
  \|(\widetilde v_{\rho})_r-p\|_{L^2(\partial B_1)}&\le\|\widetilde v_{\rho}-(\widetilde v_{\rho})_r\|_{L^2(\partial B_1)}+\|\widetilde v_{\rho}-p\|_{L^2(\partial B_1)}\\
  &\le C(\rho)+ \eta_1
  \le \frac{\delta_2}2+ \eta_1\le\delta_2,
  \end{align*} 
where we chose $\eta_1\le \frac{\delta_2}2$ and $\rho_0>0$ small enough such that $C(\rho)\le \frac{\delta_2}2$ for all $\rho\le\rho_0$.
  Then, by \cref{lemma:tec}, we have that
  \be\begin{aligned}\label{eq:finale1} \|(\widetilde v_{\rho})_r-p\|_{L^2(\partial B_1)}\le \beta\le\eps\quad\text{for every}\quad  r\in\left(\frac{1}{8}r_0,1\right).
  \end{aligned}
  \ee 
Integrating in polar coordinates and applying \eqref{eq:finale1} to all $r\in(r_0,1/2)$, we get 
\bea\label{eq:finale2}\begin{aligned} \|(\widetilde v_{\rho})_r-p\|_{L^2(B_2\setminus B_{1/8})}&=\left(\int_{1/8}^2\|(\widetilde v_{\rho})_r-p\|^2_{L^2(\partial B_t)}\,dt\right)^\frac12\\
&=\left(\int_{1/8}^2t^{n+2m+1}\|(\widetilde v_{\rho})_{rt}-p\|^2_{L^2(\partial B_1)}\,dt\right)^\frac12\\
&\le \left(\int_{1/8}^2 t^{n+2m+1}\beta^2\,dt\right)^\frac12\\
&=C\beta\le\eta_2\quad\text{for every}\quad r\in\left(r_0,\frac{1}2\right),
\end{aligned}\eea 
for $\beta>0$ small enough.
 Thus, \eqref{eq:applic} is satisfied for every $\rho\in(r_0,\frac12)$ and so we can apply the epiperimetric inequality from \cref{thm:epi} in the interval $(\frac{1}3r_0,1]$, which is a contradiction with the definition of $r_0$.
  \end{proof}
    Finally we can use \cref{prop:every-rescaled} to prove \cref{prop:every-rescaled-fundamental}. 
    \begin{proof}[Proof of \cref{prop:every-rescaled-fundamental}] 
Let $H_0:=2^{n+2(2m+1)+1}$ and $\phi_0:=n+2(2m+1)+1$, we take the corresponding $\eta_1>0$, $\eta_2>0$, $\delta_1>0$ and $\rho_0>0$ as in \cref{prop:every-rescaled}. By \cref{prop:mono}, for every $\rho\in(0,\rho_1)$ and for every $r\in(0,1)$, with $\rho_1<\rho_0$ small enough to be chosen, we have $$H(2,(\widetilde v_\rho)_r)= \frac{H(2r,\widetilde v_\rho)}{r^{n+2(2m+1)}}=\frac1{r^{n+2(2m+1)}}\frac{H(2r\rho,v)}{H(\rho,v)}\le H_0\quad\text{and}\quad \phi(2r,\widetilde v_\rho)= \phi(2\rho r,v)\le \phi_0.$$ Moreover, by \cref{prop:mono} and \cref{prop:monotonicity-weiss-tilde} we get \begin{align*}\widetilde W_{2m+1}(\widetilde v_{\rho})+C_{\widetilde W}(\widetilde v_\rho)&=
 \left(\rho\frac{\mathcal{I}( \rho, v)}{H(\rho, v)}-(2m+1)\right)+C_{\widetilde W}(\widetilde v_\rho)
 \\&\le \left( \frac12(\phi(\rho_1,v)-n)-(2m+1)\right)+\frac{\delta_1}2
\\&\le \delta_1,\end{align*} for every $\rho\in(0,\rho_1)$, if $\rho_1>0$ is small enough.
 Moreover we also have $$\|\widetilde v_{\rho}-p\|_{L^2(\partial B_1)}\le \eta_1\quad\text{and}\quad\|\widetilde v_{\rho}-p\|_{L^2(B_2)}\le \eta_2\quad\text{ for some}\quad \rho\in(0, \rho_1)$$ for some $p\in\mathcal{P}_{2m+1}$, with $\|p\|_{L^2(\partial B_1)}=1$, since $\widetilde v_\rho$ converge, up to subsequences, to some $(2m+1)$-homogeneous global solution (see \cref{prop:mono}). Then the hypotheses of \cref{prop:every-rescaled} are satisfied and we conclude. 
    \end{proof}

\section{Rate of convergence and stratification}\label{section5}
In this section we prove that the epiperimetric inequality in \cref{thm:epi} implies the rate of convergence in \cref{prop:rate} and the stratification of the contact set in \cref{cor:stratification}. 
 Once we know that we can apply the epiperimetric inequality in \cref{thm:epi}, the proofs are standard (see e.g. \cite{gps16,fs16,gpps17,csv20,car24}). We briefly sketch the proofs here.
 
  \begin{proof}[Proof of \cref{prop:rate}]
 By \cref{prop:every-rescaled-fundamental},
 as in the proof of \cref{prop:every-rescaled}, if $0\in\Lambda_{2m+1}(u)$ with $2m+1\le k$, we deduce that $$\widetilde W_{2m+1}((\widetilde v_\rho)_{r})\le Cr^\alpha\quad\text{for every}\quad r\in(0,1),$$  for some $\rho>0$, where $(\widetilde v_\rho)_{r}$ is as in \eqref{eq:doublerescalings}.
 Since $$\widetilde W_{2m+1}(v_{r\rho})=\frac{H(\rho,v)}{\rho^{n+2(2m+1)}}\widetilde W_{2m+1}((\widetilde v_\rho)_r),$$
 then the same decay can be deduced for the sequence $v_r$ for every $r\in(0,\rho)$.
 Reasoning as in the proof of \cref{prop:every-rescaled}, we get 
 $$ \int_{\partial B_1} | v_{r}-p| \,d\HH^n\le Cr^{\alpha}
 \quad\text{for every}\quad  r\in(0,\rho),$$ 
 where $p$ is the blow-up limit of $v$. 
 As a consequence we obtain the rate of convergence in $L^2(\partial B_1)$ and in $L^\infty(B_1)$, as in the proof of \cref{lemma:linftyl2}.
 \end{proof}
\begin{proof}[Proof of \cref{cor:stratification}]
As in the proof of \cref{prop:rate}, we have that if $K\subset\Lambda_{2m+1}(u)\cap \R^n$ is a compact set and $2m+1\le k$, then
$$ \int_{\partial B_1} | v_{x_0,r}-p_{x_0}| \,d\HH^n\le Cr^{\alpha }
 \quad\text{for every}\quad x_0\in\Lambda_{2m+1}(u)\cap K,\ r\in(0,\rho),$$ 
 where $\rho>0$, $v_{x_0,r}=\frac{v(x_0+rx)}{r^{2m+1}}$ and $p_{x_0}$ is the blow-up limit of $v$ at $x_0$. 
  The stratification of the set $\Lambda_{2m+1}(u)$ now follows from the implicit function theorem and the Whitney extension theorem (see for instance \cite{gp09,csv20}).
  \end{proof}

\section{Frequency gap}\label{section6} 
This section is dedicated to the proof of \cref{prop:gap}. Key points of the proof are \cref{thm:epi} and the following epiperimetric inequality for negative energies.

\begin{proposition}[Epiperimetric inequality for negative energies $W_{2m+1}$]\label{prop:epi-negative} There are constants $ \eps>0$, $\delta>0$, $\kappa>0$ and $\eta>0$, depending only on $n$ and $m$, such that the following holds.
     Let $c\in H^1(\partial B_1)$, with $c\ge 0$ on $B_1'$ and $c$ even with respect to $\{x_{n+1}=0\}$. Let $z(r,\theta)=r^{2m+1}c(\theta)$ be the $(2m+1)$-homogeneous extension in $\R^{n+1}$ of $c$. We suppose that 
     \be\label{eq:vicinanza-neg}\|c-p\|_{L^2(\partial B_1)}\le \eps\quad\text{for some}\quad p\in\mathcal{P}_{2m+1},\ee
     and 
     \be\label{eq:quasizero-neg} c\equiv 0 \quad\text{on}\quad \mathcal{Z}_\delta:=\{T[p]\ge\delta\}\cap \partial B_1', \ee
     with $\|p\|_{L^2(\partial B_1)}=1$ and $T$ is the operator in \eqref{e:definition-of-T}.
     If \be\label{eq:lowerbound}|W_{2m+1}(z)|\le \eta,\ee then there is a function $\zeta\in H^1(B_1)$ such that 
     \bea\label{eq:epi-neg}
     W_{2m+1}(\zeta)\le (1+|W_{2m+1}(z)|)W_{2m+1}(z),
     \eea 
     where $\zeta\ge 0$ on $B_1'$, $\zeta=c$ on 
     $\partial B_1$ and $\zeta$ is even with respect to $\{x_{n+1}=0\}$.
\end{proposition}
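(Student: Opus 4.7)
\emph{Proof strategy.} The statement is trivial when $W_{2m+1}(z)\ge 0$: taking $\zeta=z$ gives $W_{2m+1}(\zeta)=W_{2m+1}(z)\le(1+|W_{2m+1}(z)|)\,W_{2m+1}(z)$, since the factor on the right is $\ge 1$. The substantive case is $W_{2m+1}(z)<0$, in which we must actually \emph{decrease} the Weiss energy by at least $W_{2m+1}(z)^2$. Set $\mu:=2m+1$. Following the proof of \cref{thm:epi}, the closeness assumptions \eqref{eq:vicinanza-neg} and \eqref{eq:quasizero-neg} together with \cref{lemma:implicit} and \cref{lemma:convergence-of-s_delta} allow us to decompose $c=P+\phi$ where $P=\sum_{j=1}^{\ell}c_j\phi_j$ is the projection onto the basis $\{\phi_j\}\subset H^1_0(S_0)$ of the lower modes and $\phi=\sum_{j>\ell}\tilde c_j\phi_j^\delta$ lives in the span of the higher modes in $H^1_0(S_\delta)$. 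Write $\alpha_j\in\{1,\dots,\mu\}$ for the natural harmonic degree of $\phi_j$.

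\emph{Competitor and energy identity.} In contrast with \cref{thm:epi}, where the higher modes are promoted to a \emph{larger} homogeneity $\alpha=2m+\tfrac32$, here we \emph{lower} the homogeneity of each lower mode to its natural harmonic value and define
\[
\zeta(r,\theta) := \sum_{j=1}^{\ell}c_j\,r^{\alpha_j}\phi_j(\theta) + r^{\mu}\phi(\theta).
\]
Then $\zeta\in H^1(B_1)$, $\zeta=c$ on $\partial B_1$, $\zeta$ is even in $x_{n+1}$, and $\zeta\ge 0$ on $B_1'$, because each $r^{\alpha_j}\phi_j$ vanishes on $B_1'$ while $r^\mu\phi|_{B_1'}=r^\mu c\ge 0$. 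Since every $r^{\alpha_j}\phi_j$ is a harmonic polynomial in $B_1^+$ and the $\phi_j$ are $L^2$-orthonormal, integration by parts yields $W_\mu(r^{\alpha_j}\phi_j)=\alpha_j-\mu$ with no cross terms. Combining this with \cref{lemma:lower} and the identity $\lambda_j-\lambda(\mu)=(\alpha_j-\mu)(\alpha_j+\mu+n-1)$ gives
\[
W_\mu(\zeta)-W_\mu(z) = -\frac{1}{n+2\mu-1}\sum_{j<\ell} c_j^2(\mu-\alpha_j)^2 + 2\bigl[R_\mu(\zeta_1,r^\mu\phi)-R_\mu(r^\mu P,r^\mu\phi)\bigr],
\]
with $\zeta_1:=\sum_{j\le\ell}c_j r^{\alpha_j}\phi_j$ (the higher-mode piece $r^\mu\phi$ is unchanged, so its self-energy cancels).

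\emph{Closing the estimate.} Since $\mu-\alpha_j\ge 1$ and $\alpha_j+\mu+n-1\le n+2\mu-1$ for $j<\ell$, the diagonal term provides the linear lower bound
\[
\frac{1}{n+2\mu-1}\sum_{j<\ell} c_j^2(\mu-\alpha_j)^2 \,\ge\, \frac{1}{n+2\mu-1}\,|W_\mu(r^\mu P)|.
\]
To control the cross terms, we apply \cref{lemma:double-product} to $R_\mu(r^\mu P,r^\mu\phi)$; its leading part is $\tfrac{2c_\ell}{n+2\mu-1}\int_{\partial B_1'}T[p]\,\phi$, which is bounded by $C\delta\|\phi\|_{L^2(\partial B_1')}$ because $\phi$ vanishes on $\{T[p]\ge\delta\}$. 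The remaining bulk contributions involve $\int_{\partial B_1}\phi_j\phi$, which is $o_\delta(1)$ by the $L^2$-orthogonality of $\phi$ to $\phi_j^\delta$ combined with $\phi_j^\delta\to\phi_j$ from \cref{lemma:convergence-of-s_delta}. Plugging these into the identity yields $|R_\mu(r^\mu P,r^\mu\phi)|$ and the correction $|R_\mu(\zeta_1,r^\mu\phi)-R_\mu(r^\mu P,r^\mu\phi)|$ of size $O(\delta+\eps)$, and hence $|W_\mu(r^\mu P)|\ge|W_\mu(z)|-C(\delta+\eps)$. The main gain is therefore a linear multiple of $|W_\mu(z)|$; since $|W_\mu(z)|^2\le\eta|W_\mu(z)|$, this dominates the quadratic target $W_\mu(z)^2$ once $\eta,\delta,\eps$ are taken sufficiently small in terms of $n$ and $m$, yielding \eqref{eq:epi-neg}. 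The delicate point is the uniform tracking of the parameters: the $H^1(\partial B_1)$-norm of $c$ is not assumed bounded a priori, so the cross-term correction must be absorbed purely through the $L^2$ convergence in \cref{lemma:convergence-of-s_delta} and the a priori bound on $\|c\|_{H^1(\partial B_1)}$ that one extracts from the Rayleigh identity $W_\mu(z)=(n+2\mu-1)^{-1}(\|\nabla_\theta c\|^2-\lambda(\mu)\|c\|^2)$ together with $\|c\|_{L^2}\le 1+\eps$ and $|W_\mu(z)|\le\eta$.
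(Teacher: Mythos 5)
Your reduction to the case $W_{2m+1}(z)<0$ and your diagonal computation (dropping each lower mode to its harmonic degree $\alpha_j$ gains $\tfrac{1}{n+2\mu-1}(\mu-\alpha_j)^2c_j^2$, with no cross terms among the lower modes) are correct, but the argument does not close, and the gap is in the last step. The error term $2\bigl[R_\mu(\zeta_1,r^\mu\phi)-R_\mu(r^\mu P,r^\mu\phi)\bigr]$ contains, for each $j<\ell$, a boundary contribution of the form $c_j\cdot C_j\int_{\partial B_1'}(\partial_{\theta_{n+1}}\phi_j)\,c\,d\HH^{n-1}$ with $C_j$ of order one and no sign; since $\phi|_{B_1'}=c|_{B_1'}\ge0$ is controlled only through the trace/$H^1$ bound, this error has size $\sum_{j<\ell}|c_j|\cdot O(\eps^{1/2})$ — in your own accounting, $O(\delta+\eps)$ — a quantity that does \emph{not} scale with $|W_\mu(z)|$. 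The constants $\eps$ and $\delta$ are fixed before $z$ is given, whereas $|W_\mu(z)|$ ranges over all of $(0,\eta]$; for $z$ with $|W_\mu(z)|\ll\delta+\eps$ your bound ``$|W_\mu(r^\mu P)|\ge|W_\mu(z)|-C(\delta+\eps)$'' is vacuous (its right-hand side is negative), and after Young's inequality the net change of energy is only $\le-\tfrac12 c_{n,m}\sum_{j<\ell}c_j^2+C(\delta+\eps)$, which need not be $\le-|W_\mu(z)|^2$. The issue is structural: in an epiperimetric inequality whose required gain is quadratic in $W_\mu(z)$, every error must be forced to scale like $|W_\mu(z)|^2$, not like a fixed power of $\eps$ and $\delta$; your final sentence compares $|W_\mu(z)|$ with $|W_\mu(z)|^2$, but the real competition is between $C(\delta+\eps)$ and $|W_\mu(z)|$, which you cannot win uniformly.

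The paper's proof avoids this with a different competitor. It splits $c=h+\phi$ with $h=c_\ell\phi_\ell$ (only the $p$-mode) and $\phi=\sum_{j\ne\ell}c_j\phi_j^\delta$ (all remaining modes, lower and higher together), and sets $\zeta=r^\mu h+r^\alpha\phi$ where $\alpha<\mu$ is chosen so that $\kappa_{\mu,\alpha}=|W_\mu(z)|$; thus the homogeneity shift itself is proportional to $|W_\mu(z)|$. With this choice the double products cancel exactly as in \cref{thm:epi}, and \cref{lemma:lower}, \cref{lemma:higher} and \cref{lemma:double-product} give $W_\mu(\zeta)-(1+\kappa_{\mu,\alpha})W_\mu(z)=\tfrac{n+2\mu-1}{n+2\alpha-1}\kappa_{\mu,\alpha}W_\mu(r^\mu\phi)+C\kappa_{\mu,\alpha}^2\|\phi\|_{L^2(\partial B_1)}^2$, where the first term is $\le\kappa_{\mu,\alpha}W_\mu(z)=-|W_\mu(z)|^2$ (using $W_\mu(r^\mu\phi)\le W_\mu(z)$, a consequence of $W_\mu(r^\mu h)=0$ and $R_\mu(r^\mu h,r^\mu\phi)\ge0$, which follows from superharmonicity of $p$ and $c\ge0$ on $B_1'$) and the error is $C\eps|W_\mu(z)|^2$ — both of the same order as the target, with the error carrying the extra factor $\eps$. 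To salvage your construction you would need to prove that the boundary cross terms are themselves $O(|c_j|\cdot|W_\mu(z)|)$, and no such bound is available.
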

\begin{proof}
    The proof is similar to the one in \cref{thm:epi}. 
    We first observe that we can suppose $W(z)< 0$, since otherwise one can simply choose $\zeta=z$.
    As in \cref{lemma:implicit}, using \eqref{eq:vicinanza-neg}, \eqref{eq:quasizero-neg} and \cref{lemma:convergence-of-s_delta}, we can decompose $c$ as $$c(\theta)=h(\theta)+\phi(\theta),$$ where $$h(\theta)=c_\ell \phi_\ell\quad\text{and}\quad
    \phi(\theta)=\sum_{j\not=\ell} c_j\phi_j^\delta, $$ where $\ell$ is defined as in \eqref{eq:d}. 
   We set $\mu:=2m+1$ and we define the competitor $$\zeta(r,\theta)=r^{\mu}h(\theta)+r^{\alpha}\phi(\theta),$$ where $\alpha$ is such that
    $$|W_{\mu}(z)|= \kappa_{\mu,\alpha},$$
    where $\kappa_{\mu,\alpha}$ is given by \eqref{eq:kappa}. Now, since
    $$\frac{\mu-\alpha}{\alpha+\mu+n-1}=\kappa_{\mu,\alpha}\le\eta,$$
    by choosing $\eta$ small enough we get $\alpha\in(2m,2m+1)$.

We notice that $\zeta$ is an admissible competitor since $$\zeta=r^\alpha \phi=r^\alpha c\ge0\quad\text{on } B_1'.$$

   Defining the operator $R$ as in \eqref{eq:rmu} and using that $W_{\mu}(r^{\mu}h)=0$, we get
    \begin{align*}R_\mu(r^\mu h,r^\mu \phi)&=R_\mu(r^\mu h,r^\mu c)= -\int_{B_1}\Delta (r^{\mu}h)r^{\mu}c\,dx=-2\int_{B_1'}\partial_{x_{n+1}}(r^\mu h)r^\mu c\,d\HH^n\ge0,
    \end{align*} since $r^\mu h$ is a solution to \eqref{def:sol} and $c\ge0$ on $B_1'$. 
    Then \be\label{eq:101}0>W_\mu(z)=W_{\mu}(r^\mu\phi)+R_{\mu}(r^\mu h,r^\mu\phi)\ge W_{\mu}(r^\mu\phi).\ee
    Using again that $r^\mu h$ has zero Weiss' energy, we obtain
    $$W_{\mu}(r^\mu h+r^\alpha\phi)=W_{\mu}(r^{\alpha}\phi)+ 2R_{\mu}(r^{\mu}h,r^{\alpha}\phi).$$ 
    Then, by \cref{lemma:higher} and \cref{lemma:double-product}, there is a constant $C>0$, depending only on $n$ and $m$, such that \begin{align*} W_{\mu}(\zeta)-(1+\kappa_{\mu,\alpha})W_{\mu}(z)&= W_{\mu}(r^{\alpha}\phi)-(1+\kappa_{\mu,\alpha})W_{\mu}(r^{\mu}\phi)\\&=\frac{-\kappa_{\mu,\alpha}}{n+2\alpha-1}\sum_{j=1}^\infty(\lambda(\alpha)-\lambda_j^\delta)c_j^2\\&=\frac{\kappa_{\mu,\alpha}}{n+2\alpha-1}\left(\sum_{j=1}^\infty(\lambda_j^\delta-\lambda(\mu))c_j^2+\sum_{j=1}^\infty(\lambda(\mu)-\lambda(\alpha))c_j^2\right)\\&=\frac{n+2\mu-1}{n+2\alpha-1} \kappa_{\mu,\alpha}W_{\mu}(r^\mu\phi)+C\kappa_{\mu,\alpha}^2\|\phi\|_{L^2(\partial B_1)}^2,
    \end{align*} where in the last equality we used \cref{lemma:lower}.
    Combining the above estimate with \eqref{eq:vicinanza-neg} and \eqref{eq:101}, we get that
    \bea 
    W_{\mu}(\zeta)-(1+\kappa_{\mu,\alpha})W_{\mu}(z)&\le \kappa_{\mu,\alpha}W_{\mu}(r^\mu\phi)+C\kappa_{\mu,\alpha}^2\eps\\
    &\le \kappa_{\mu,\alpha}W_{\mu}(z)+C\kappa_{\mu,\alpha}^2\eps\\
    &=-|W_{\mu}(z)|^2+C|W_{\mu}(z)|^2\eps\\&= |W_{\mu}(z)|^2(-1+C\eps)\\&\le0
    \eea   
    since $\eps>0$ is small enough.
\end{proof}
To show the frequency gap, we will use the following lemma from \cite{csv20} with the epiperimetric inequalities in \cref{thm:epi} and \cref{prop:epi-negative}.
\begin{lemma}\label{lemma:gap}
    	Let $c\in H^1(\partial B_1)$ such that $r^{\mu+t}c$ is a solution to the thin obstacle problem \eqref{def:sol}, then 
		\begin{equation*}\label{terza}
			W_\mu(r^{\mu+t}c)=t\| c\|_{L^2(\partial B_1)}^2
		\quad\text{and}\quad
			W_\mu(r^{\mu}c)=\left(1+\frac{t}{n+2\mu-1}\right)W_\mu(r^{\mu+t}c).
		\end{equation*}
	\end{lemma}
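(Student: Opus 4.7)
The plan is to reduce both identities to direct computation, based on two ingredients: a Pohozaev-type identity valid for any homogeneous solution of \eqref{def:sol}, and the explicit expression of the Dirichlet energy of a homogeneous extension in spherical coordinates.

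First, I would establish the first identity via integration by parts. Setting $u:=r^{\mu+t}c(\theta)$, the complementarity conditions in \eqref{def:sol} give that the distributional Laplacian of $u$ is a non-positive measure concentrated on the contact set $\{u=0\}\cap B_1'$, so $\int_{B_1} u\,\Delta u = 0$. Integration by parts together with the $(\mu+t)$-homogeneity of $u$ then yields
\[
\int_{B_1}|\nabla u|^2\,dx = \int_{\partial B_1} u\,\partial_\nu u\,d\HH^n = (\mu+t)\int_{\partial B_1} c^2\,d\HH^n.
\]
Subtracting $\mu\|c\|_{L^2(\partial B_1)}^2$ gives $W_\mu(r^{\mu+t}c)=t\|c\|_{L^2(\partial B_1)}^2$.

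Next I would derive a general formula, valid for any exponent $\beta>0$ and any $c\in H^1(\partial B_1)$: writing the gradient in spherical coordinates and integrating in $r$ gives
\[
\int_{B_1}|\nabla(r^\beta c)|^2\,dx = \frac{1}{n+2\beta-1}\Bigl(\beta^2\|c\|_{L^2(\partial B_1)}^2 + \|\nabla_\theta c\|_{L^2(\partial B_1)}^2\Bigr).
\]
Applying this with $\beta=\mu+t$ and comparing with the Pohozaev identity from the first step, I would solve for $\|\nabla_\theta c\|_{L^2(\partial B_1)}^2$, obtaining
\[
\|\nabla_\theta c\|_{L^2(\partial B_1)}^2 = (\mu+t)(n+\mu+t-1)\|c\|_{L^2(\partial B_1)}^2.
\]

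Finally, I would substitute this value back into the spherical-coordinate formula with $\beta=\mu$ and subtract $\mu\|c\|_{L^2(\partial B_1)}^2$ to get
\[
W_\mu(r^\mu c) = \frac{\mu^2 + (\mu+t)(n+\mu+t-1) - \mu(n+2\mu-1)}{n+2\mu-1}\,\|c\|_{L^2(\partial B_1)}^2,
\]
and a routine algebraic simplification reduces the numerator to $t(n+2\mu+t-1)$, producing
\[
W_\mu(r^\mu c) = \Bigl(1+\frac{t}{n+2\mu-1}\Bigr)\,t\|c\|_{L^2(\partial B_1)}^2 = \Bigl(1+\frac{t}{n+2\mu-1}\Bigr)W_\mu(r^{\mu+t}c).
\]
The only non-routine step is the justification of $\int u\,\Delta u=0$, which is immediate from the fact that $\Delta u$ is supported on $\{u=0\}$; beyond that the argument is purely algebraic.
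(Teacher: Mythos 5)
Your proof is correct: the paper states this lemma without proof (importing it from \cite{csv20}), and your argument --- the vanishing of $\int_{B_1}u\,\Delta u$ because $\Delta u$ is a measure supported where $u=0$, the resulting Pohozaev-type identity $\int_{B_1}|\nabla u|^2=(\mu+t)\|c\|_{L^2(\partial B_1)}^2$, and the spherical-coordinates formula for the Dirichlet energy of a $\beta$-homogeneous extension --- is exactly the standard computation behind it. The algebra checks out (the numerator does simplify to $t(n+2\mu+t-1)$), so nothing is missing.
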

\begin{proof}[Proof of \cref{prop:gap}] By contradiction, suppose that there are functions $u_k$ and a sequence $t_k\to0$, such that $u_k$ is global $(2m+1+t_k)$-homogeneous solution to the thin obstacle problem \eqref{def:sol}. Without loss of generality we can suppose that the traces $c_k:=u_k|_{\partial B_1}$ are such that $\|c_k\|_{L^2(\partial B_1)}=1$. Notice that 
as in \cref{prop:blow-up-homo2}, we have that $u_k$ converges in $C^{1,\alpha}(B_1^+)$, up to subsequences, to some function $p$ which is a $(2m+1)$-homogeneous solution. In particular, $p\in\mathcal{P}_{2m+1}$ and $\|p\|_{L^2(\partial B_1)}=1.$ This means that $$\|u_k-p\|_{L^\infty(B_{3/2})}\le \eta_3 \quad\text{for every}\quad k>k_0,$$
for some $k_0\in\N$, where $\eta_3>0$ is defined in \cref{lemma:B.3}.
Therefore $$u_k\equiv 0\quad\text{in}\quad \mathcal{Z}_\delta\quad\text{for every}\quad k>k_0,$$ by \cref{lemma:B.3}. 
Moreover we can suppose that $$|W_{2m+1}(u_k)|\le \eta\quad\text{for every}\quad k>k_0,$$ which follows by \cref{lemma:gap} with $\eta>0$ as in \eqref{eq:lowerbound}. Then the function $u_k$ satisfies the hypotheses of \cref{thm:epi} and \cref{prop:epi-negative}.

Passing to a subsequence, we can suppose that either $t_k>0$ for every $k>k_0$ or $t_k<0$ for every $k>k_0$. In the first case we use \cref{thm:epi}, while
in the second case we use \cref{prop:epi-negative}. For simplicity, we suppose that $t_k<0$ for every $k>k_0$, the other case being analogous.
By \cref{lemma:gap} \be\label{eq:eq13}W_{2m+1}(r^{{2m+1}+t_k}c_k)=t_k \|c_k\|_{L^2(\partial B_1)}^2=t_k<0\ee and $$W_{2m+1}(r^{2m+1}c)=\left(1+C_mt_k\right) t_k, \quad\text{where}\quad C_m=\frac{1}{n+2(2m+1)-1}.$$
Then, by the epiperimetric inequality in \cref{prop:epi-negative}, we have that for every $k>k_0$
$$\begin{aligned}W_{2m+1}(r^{{2m+1+t_k}}c_k)&\le \left(1+ |(1+C_mt_k)t_k|\right)W_{2m+1}(r^{{2m+1}}c_k)\\&=\left(1- (1+C_mt_k)t_k\right)\left(1+C_mt_k\right) W_{2m+1}(r^{{2m+1}+t_k}c_k),
			\end{aligned}$$ where in the last equality we used \cref{lemma:gap}.
			Then by \eqref{eq:eq13}
			$$\left(1- (1+C_mt_k)t_k\right)\left(1+C_mt_k\right)\le 1\quad\text{for every}\quad k>k_0,$$ which implies that $$-t_k+C_mt_k+O(t_k^2)\le 0\quad\text{for every}\quad k>k_0,$$
			which is a contradiction by the definition of $C_m$ and the fact that $t_k\to0^-$. 
\end{proof}

\bibliographystyle{alpha}
\bibliography{thin-obstacle-references.bib}

\end{document}